\newtheorem{theorem}{Theorem}[section]
\newtheorem{corollary}[theorem]{Corollary}
\newtheorem{lemma}[theorem]{Lemma}
\theoremstyle{definition}
\newtheorem{remark}[theorem]{Remark}
\numberwithin{equation}{section}
\providecommand{\customgenericname}{}
\newcommand{\newcustomtheorem}[2]{%
  \newenvironment{#1}[1]
  {%
   \renewcommand\customgenericname{#2}%
   \renewcommand\theinnercustomgeneric{##1}%
   \innercustomgeneric
  }
  {\endinnercustomgeneric}
}
\newcommand{\newcustomlemma}[2]{%
  \newenvironment{#1}[1]
  {%
   \renewcommand\customgenericname{#2}%
   \renewcommand\theinnercustomgeneric{##1}%
   \innercustomgeneric
  }
  {\endinnercustomgeneric}
}
\newcommand\relphantom[1]{\mathrel{\phantom{#1}}}
\begin{document}

\address{School of Mathematics \\
           Korea Institute for Advanced Study, Seoul\\
             Republic of Korea}
   \email{qkrqowns@kias.re.kr}
\author{Bae Jun Park}

\title[boundedness of $\Psi$DO's of type (1,1) on Triebel-Lizorkin and Besov spaces ]{Sharp estimates for pseudo-differential operators of type (1,1) on Triebel-Lizorkin and Besov spaces}

\begin{abstract} 
Pseudo-differential operators of type $(1,1)$ and order $m$ are continuous from $F_p^{s+m,q}$ to $F_p^{s,q}$ if $s>d/\min{(1,p,q)}-d$ for $0<p<\infty$, and from $B_p^{s+m,q}$ to $B_{p}^{s,q}$ if $s>d/\min{(1,p)}-d$ for $0<p\leq\infty$. 
In this work we extend the $F$-boundedness result to $p=\infty$. Additionally, we prove that the operators map $F_{\infty}^{m,1}$ into $bmo$ when $s=0$, and consider H\"ormander's twisted diagonal condition for arbitrary $s\in\mathbb{R}$.
We also prove that the restrictions on $s$ are necessary conditions for the boundedness to hold.

\end{abstract}

\subjclass[2010]{Primary 42B35; Secondary 42B37}

\keywords{pseudo-differential operator, Triebel-Lizorkin spaces}

\maketitle
\section{\textbf{Introduction}}\label{intro}

Let $S(\mathbb{R}^d)$ denote the Schwartz space and $S'(\mathbb{R}^d)$ the space of tempered distributions. 
A symbol $a$ in H\"ormander's class $\mathcal{S}^m_{{\rho},\delta}$ is a smooth function defined on $\mathbb{R}^d\times\mathbb{R}^d$, satisfying that for all multi-indices ${\alpha}$ and $\beta$ there exists a constant $c_{{\alpha},\beta}$ such that 
\begin{equation*}
|\partial_{\xi}^{{\alpha}}\partial_{x}^{\beta}a(x,\xi)|\leq c_{{\alpha},\beta}(1+|\xi|)^{m-{\rho}|{\alpha}|+\delta|\beta|} ~ \text{for}~ (x,\xi)\in\mathbb{R}^d\times \mathbb{R}^d,
\end{equation*} 
and the corresponding pseudo-differential operator $T_{[a]}$ is given by 
\begin{equation*}
T_{[a]}f(x)=\int_{\mathbb{R}^d}{a(x,\xi)\widehat{f}(\xi)e^{2\pi i \langle x,\xi \rangle}}d\xi, \quad f\in {S}(\mathbb{R}^d).
\end{equation*}
Denote by  $Op\mathcal{S}_{{\rho},\delta}^m$ the class of pseudo-differential operators with symbols in $\mathcal{S}_{{\rho},\delta}^{m}$.

It is well known that for $0\leq \delta\leq{\rho}\leq 1$ the operator $T_{[a]}\in Op\mathcal{S}_{{\rho},\delta}^{m}$ maps $S$ continuously into itself. Furthermore, unless $\delta={\rho}=1$ the adjoint operator of $T_{[a]}\in Op\mathcal{S}_{{\rho},\delta}^{m}$ belongs to the same type of pseudo-differential operators (see \cite[Appendix]{Park} and \cite[p.94]{Ho1}) and  thus $T_{[a]}$ extends via duality to a mapping from $S'$ into itself.

However when ${\rho}=\delta=1$ it has many different situations. Bourdaud \cite{Bou1, Bou2} has shown that they are not closed under taking adjoints.
 In this case it was proved by Ching \cite{Chi} that not all operators of order $0$ are $L^2$ continuous. Afterwards Stein first proved that all operators in $Op\mathcal{S}_{1,1}^{0}$ are bounded on $H^{{s}}(=L_{{s}}^2)$ for $s>0$ in his unpublished work and Meyer \cite{Me} improved this result by proving the continuity of $Op\mathcal{S}_{1,1}^m$ from ${L}^p_{{s}+m}$ to ${L}^p_{{s}}$ with ${s}>0$ for $1<p<\infty$. 
Bourdaud \cite{Bou2} gave a simplified proof of the result of Meyer and also showed, by duality and interpolation, that if  $T_{[a]}, (T_{[a]})^*\in Op\mathcal{S}_{1,1}^{0}$, then $T_{[a]}$ is bounded on $L_s^p$ for all $s\in\mathbb{R}$ and $1<p<\infty$, in particular on $L^2$.
The operators in $Op\mathcal{S}_{1,1}^{0}$ are singular integral operators and by using $T1$ theorem by David and Journ\'e \cite{Da_Jo}, $T_{[a]}$ is bounded in $L^2$ if and only if $(T_{[a]})^*1\in BMO$, which is actually a weaker condition than Bourdaud's $(T_{[a]})^*\in Op\mathcal{S}_{1,1}^{0}$.
  H\"ormander \cite{Ho} introduced the twisted diagonal $\{(\xi,\eta):\xi+\eta=0\}$ in order to give the boundedness from $H^{s+m}(=L_{s+m}^2)$ to $H^s$ for all $s,m\in\mathbb{R}$. That is,
  $T_{[a]}\in Op\mathcal{S}_{1,1}^{m}$ maps $H^{s+m}$ to $H^s$ with arbitrary $s\in\mathbb{R}$ if $a$ satisfies the twisted diagonal condition
\begin{equation}\label{further}
\widehat{a}(\eta,\xi)=0 \quad \text{where}\quad C\big(|\eta+\xi|+1 \big)\leq |\xi| 
\end{equation} for some $C>1$.
 More detailed references may be found in \cite{Ho2, Ho3, Jo2}.

The continuity of the operators of type $(1,1)$ in Besov space and Triebel-Lizorkin space has been developed by Runst \cite{Ru}, Torres \cite{To}, and Johnsen \cite{Jo}.
Let 
\begin{equation*}
{\tau}_{p,q}:=\frac{d}{\min{(1,p,q)}}-d\qquad\text{and}\qquad  \tau_{p}:=\frac{d}{\min{(1,p)}}-d.
\end{equation*}

\begin{customthm}{A}\label{theoremA}
Let $m\in\mathbb{R}$ and $0<p,q\leq \infty$. Suppose $a\in\mathcal{S}_{1,1}^{m}$. Then
\begin{enumerate}
\item $T_{[a]}$ maps $F_p^{s+m,q}$ to $F_p^{s,q}$ if $p<\infty$ and  $s>\tau_{p,q}$.
\item $T_{[a]}$ maps $B_p^{s+m,q}$ to $B_p^{s,q}$ if $s>\tau_{p}$.
\end{enumerate}
\end{customthm}

Since the adjoint operators of $Op\mathcal{S}_{1,1}^{m}$ do not belong to the same class we may not generally use duality argument to define the operators on $S'$.
For $0<p,q<\infty$ and $s\in\mathbb{R}$, $S$ is dense in $F_p^{s,q}$, $B_p^{s,q}$, and thus $T_{[a]}$ is well-defined on the spaces by density arguments. 
For $0<p<\infty$, $q=\infty$ and $s>\tau_{p,q}$, we choose $\epsilon>0$ so that $s-\epsilon>\tau_{p,q}$. Then the embedding $F_p^{s+m,\infty}\hookrightarrow F_p^{s+m-\epsilon,p}$ and Theorem A with $0<p=q<\infty$ imply that $T_{[a]}$ is well-defined on $F_p^{s+m,\infty}$ and, actually, $T_{[a]}: F_{p}^{s+m,\infty}\to S'$. Accordingly, the embedding $B_p^{s+m,\infty}\hookrightarrow F_p^{s+m-\epsilon,p}$ ensures that $T_{[a]}:B_p^{s+m,\infty}\to S'$ for $0<p<\infty$ and $s>\tau_p$.
The problem happens when $p=\infty$. Theorem A (2) was first proved in \cite{Ru}, but in the paper a flaw has been detected to define $T_{[a]}\in Op\mathcal{S}_{1,1}^{m}$ on $S'$. See the remark in \cite[p. 16]{Ru} (the flaw is just about the definition of $T_{[a]}\in Op\mathcal{S}_{1,1}^{m}$, and his arguments are still valid with the adaption of (\ref{defdef}) below).
Later, Johnsen \cite{Jo, Jo1} pointed out this issue, and gave a rigorous definition of $T_{[a]}$ on $S'$ by using a limiting argument. Let $a_{j,k}(x,\xi):=\phi_j\ast a(\cdot ,\xi)(x)\widehat{\phi_k}(\xi)$ be defined as in (\ref{defpse}) below. Then we define $T_{[a]}$ by 
\begin{equation}\label{defdef}
T_{[a]}f:=\lim_{N\to\infty}{\sum_{k=0}^{N}{\sum_{j=0}^{N}{T_{[a_{j,k}]}f}}}, \quad f\in S'
\end{equation}  whenever  the limit converges in $S'$. 
Note that the case $p=\infty$ for Besov spaces was studied in Theorem A, but it is unknown for $F_{\infty}$-spaces.

Another approach was given by Torres \cite{To}, who applied atoms and molecules for $F_p^{s,q}$, introduced by Frazier and Jawerth \cite{Fr_Ja}.
 Every $f\in F_{p}^{s,q}$ can be written as $f=\sum_{Q}{s_Q A_Q}$ where $\{s_Q\}_{Q}$ is a sequence of complex numbers in $f_{p}^{s+m,q}$ and $A_Q$'s are atoms for $F_{p}^{s+m,q}$ (see \cite{Fr_Ja} for more detail). Then Torres defined 
 \begin{equation*}
 \widetilde{T_{[a]}}f:=\sum_{Q}{s_Q T_{[a]}A_Q}
 \end{equation*} and proved that $\widetilde{T_{[a]}}$ maps $F_{p}^{s+m,q}$ to $F_p^{s,q}$ by showing
$T_{[a]}$ maps atoms for $F_{p}^{s+m,q}$ to molecules for $F_{p}^{s,q}$. His argument also works for $p=\infty$.
Note that $\widetilde{T_{[a]}}$ agrees with $T_{[a]}$ on $F_p^{s,q}$ for $0<p<\infty$ by the density argument for $0<p<\infty$. However, we should be careful to say that this implies the boundedness of the operator $T_{[a]}$ when $p=\infty$ because $T_{[a]}$ is not continuous on $S'$ as we mentioned above.\\

In this paper we extend $F$-boundedness results in Theorem A to $p=\infty$ with the adaption of (\ref{defdef}). 
That is, $T_{[a]}\in Op\mathcal{S}_{1,1}^{m}$ maps $F_{\infty}^{s+m,q}$ into $F_{\infty}^{s,q}$ for $s>\tau_{q}( =\tau_{\infty,q} )$ and $m\in\mathbb{R}$.
We also prove that the condition $s>\tau_q$ can be dropped under the H\"ormander's twisted diagonal condition and furthermore, when $s=0$, $T_{[a]}\in Op\mathcal{S}_{1,1}^{m}$ maps $F_{\infty}^{m,q}$ into $bmo(=F_{\infty}^{0,2})$.\\

Another main part of the paper is about negative results. We prove that the assumptions $s>\tau_{p,q}$ and $s>\tau_p$ in Theorem A are necessary conditions for the boundedness to hold. Moreover, we also discuss the sharpness of our new boundedness result for $p=\infty$, which is an ``almost sharpness" of $F_{\infty}$-boundedness in the sense that $s>\tau_q$ is sharp when considering only a grid of dyadic cubes of a fixed side length in the definition of the $F_{\infty}$-norm.

Some negative results for $p,q>1$ may be found in \cite{Bou2, Chi} and \cite{Jo1}, and the same technique can be applied to show the optimality of $s>\tau_{p,q}=\tau_p=0$ for $p,q>1$ (see Remark \ref{remark2.8} below for more details).
However, this method is not valid for $\min{(p,q)}\leq 1$ and particular emphasis is given to the case $\min{(p,q)}\leq1$.
\\

This paper is organized as follows. The main results are stated in Section \ref{statementsection}. We give some key lemmas for the proof of our results in Section \ref{maximal}. Then we prove  our results in Section \ref{positiveresult} and Section \ref{negativeresult}.

  \section{Statements of main results}\label{statementsection}
  
\subsection*{Notations}  
We use standard notation. 
Let $\mathbb{N}$ be the collection of all natural numbers and $\mathbb{N}_0:=\mathbb{N}\cup\{0\}$. 
$\mathbb{Z}$ denotes the collection of all integers.
For $f\in S$ the Fourier transform is defined by the formula
$\widehat{f}(\xi)=\int_{\mathbb{R}^d}{f(x) e^{-2\pi i\langle x,\xi \rangle}}dx$ and denote by $f^{\vee}$ the inverse Fourier transform of $f$.  We also extend these transforms to $S'$.
 Let  $\mathcal{D}$ denote the set of all dyadic cubes in $\mathbb{R}^d$, and 
 for each $k\in\mathbb{Z}$, $\mathcal{D}_{k}$ stands for the subset of $\mathcal{D}$ consisting of the cubes with side length $2^{-k}$.
 For each $Q\in\mathcal{D}$ we denote the side length of $Q$ by $l(Q)$ and the characteristic function of $Q$ by $\chi_Q$, and for $r>0$ we denote by $rQ$ the cube concentric with $Q$ having the side length $rl(Q)$.  
The symbol $X\lesssim Y$ means that there exists a positive constant $C$, possibly different at each occurrence, such that $X\leq CY$. $X\approx Y$ means $C^{-1}Y\leq X\leq CY$ for a positive unspecified constant $C$.

\subsection*{Function spaces}
 We  recall the definitions of Besov sapces and Triebel-Lizorkin spaces from \cite{Fr_Ja} and \cite{Tr}.
Let $\phi$ be a smooth function so that $\widehat{\phi}$ is supported in $\{\xi:2^{-1}\leq |\xi|\leq 2\}$ and $\sum_{k\in\mathbb{Z}}{\widehat{\phi_k}(\xi)}=1$ for $\xi\not=0$ where $\phi_k:=2^{kd}\phi(2^k\cdot)$. 
Let $\widehat{\Phi}:=1-\sum_{k=1}^{\infty}\widehat{\phi_k}$. Then we define convolution operators $\Pi_0$ and $\Pi_k$ by
$\Pi_0f:=\Phi\ast f$ and $\Pi_kf:=\phi_k\ast f$ for $k\geq 1$.
For $0<p,q\leq \infty$ and $s\in \mathbb{R}$ the (inhomogeneous) Besov spaces ${B}_p^{s,q}$ are defined as a subspace of $S'$ with  (quasi-)norms   
\begin{equation*}
\Vert f\Vert_{{B}_p^{s,q}}:=\Vert \Pi_0f\Vert_{L^p}+\big\Vert \{2^{s k}\Pi_kf\}_{k=1}^{\infty}\big\Vert_{l^q(L^p)}.
\end{equation*}
For $0<p<\infty$, $0<q\leq \infty$, and $s\in\mathbb{R}$ we define (inhomogeneous) Triebel-Lizorkin spaces ${F}_{p}^{s,q}$ to be a subspace of $S'$ with norms  
\begin{equation*}
\Vert f\Vert_{F_p^{s,q}}:=\Vert \Pi_0f\Vert_{L^p}+\big\Vert \{2^{s k}\Pi_kf\}_{k=1}^{\infty}\big\Vert_{L^p(l^q)}. 
\end{equation*}
When $p=q=\infty$ we apply 
\begin{equation*}
\Vert f\Vert_{F_{\infty}^{s,\infty}}:=\Vert f\Vert_{B_{\infty}^{s,\infty}},
\end{equation*} and for $p=\infty$ and $q<\infty$ we employ
\begin{equation*}
\Vert f\Vert_{F_{\infty}^{s,q}}:=\Vert \Pi_0f\Vert_{L^{\infty}}+\sup_{l(P)<1}\Big(\frac{1}{|P|}\int_P{\sum_{k=-\log_2{l(P)}}^{\infty}{2^{s kq}|\Pi_kf(x)|^q}}dx \Big)^{1/q}
\end{equation*} where  the supremum is taken over all dyadic cubes whose side length is less than $1$.
According to those norms,  the spaces are quasi-Banach spaces (Banach spaces if $p\geq 1, q\geq 1$).
Note that the spaces are a generalization of many standard function spaces such as $L^p$ spaces, Sobolev spaces, and Hardy spaces. We recall  $L^p = F_p^{0,2}$ for $1<p<\infty$, $h^p = F_p^{0,2}$ for $0<p<\infty$, ${L}_s^p = {F}_p^{s,2}$ for $s>0, 1<p<\infty$, and $ bmo = {F}_{\infty}^{0,2}$.

\subsection{Positive results}
\begin{theorem}\label{mmain}
Let $m\in\mathbb{R}$, $0<q< \infty$, and $\mu\in\mathbb{N}$. Suppose $a\in\mathcal{S}_{1,1}^{m}$. If $s>{\tau}_{q}$  then 
\begin{align}\label{maininequality}
&\sup_{P\in\mathcal{D}_{\mu}}\Big(\frac{1}{|P|}\int_P\sum_{k=\mu}^{\infty}{2^{skq} \big|\Pi_kT_{[a]}f(x) \big|^q}dx \Big)^{1/q}\nonumber\\
&\lesssim\sup_{0\leq k\leq \mu-1}{\Vert 2^{k(s+m)}\Pi_kf \Vert_{L^{\infty}}}
+\sup_{R\in\mathcal{D}_{\mu}}\Big(\frac{1}{|R|}\int_R\sum_{k=\mu}^{\infty}{2^{(s+m)kq} \big|\Pi_kf(x) \big|^q} dx\Big)^{1/q}.
\end{align}

Here the implicit constant of the inequality is independent of $\mu$.
 \end{theorem}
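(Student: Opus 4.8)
The plan is to reduce the estimate to a sum over dyadic frequency blocks and then split the operator according to the interaction between the frequency of $f$ and the frequency of the output, in the spirit of a paraproduct decomposition. Fix $P\in\mathcal{D}_\mu$. Writing $a_{j,k}(x,\xi)=\phi_j\ast a(\cdot,\xi)(x)\widehat{\phi_k}(\xi)$ and decomposing $f=\sum_{\ell\ge 0}\Pi_\ell f$, one has for each $k\ge\mu$
\begin{equation*}
\Pi_k T_{[a]}f = \sum_{\ell\ge 0}\sum_{j\ge 0}\Pi_k T_{[a_{j,\ell}]}\Pi_\ell f,
\end{equation*}
and because $\widehat{\phi_\ell}$ is supported in an annulus of size $2^\ell$ while $\phi_j\ast a(\cdot,\xi)$ has spatial-frequency support of size $2^j$, the summand $\Pi_k T_{[a_{j,\ell}]}\Pi_\ell f$ is nonzero only when $2^k$ is comparable to $\max(2^j,2^\ell)$ (the usual three regimes: $j\ll\ell$ so $k\approx\ell$; $j\approx\ell$ so $k\lesssim\ell$; $j\gg\ell$ so $k\approx j$, which is exactly the ``twisted diagonal'' region that causes the loss of derivatives). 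I would package these into three pieces $T_{[a]}f = \sum_{i=1}^{3} T^{(i)}f$ and treat each separately, estimating $2^{sk}\Pi_k T^{(i)}f$ pointwise by a sum of shifted Hardy--Littlewood maximal functions of the pieces $2^{(s+m)\ell}\Pi_\ell f$, with geometrically decaying coefficients once $s+m$ (resp. $s$) is in the admissible range.

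The next step is to separate the contribution of the ``low'' blocks $0\le\ell\le\mu-1$ from the ``high'' blocks $\ell\ge\mu$, matching the two terms on the right-hand side of \eqref{maininequality}. For the low blocks, $\Pi_\ell f$ is supported at frequency $\lesssim 2^\mu$, the output lands at frequency $\lesssim 2^\mu$ plus the twisted-diagonal tail at frequency $\approx 2^j$; here I would use $\Vert 2^{(s+m)\ell}\Pi_\ell f\Vert_{L^\infty}$ directly and sum over $\ell<\mu$ and over $j$, the $j$-sum converging because the symbol estimates give, for each fixed $\ell$, decay $2^{-jN}$ after integrating by parts in $\xi$ enough times (this is where $N$ is taken large, using that the $x$-derivatives cost only $2^{\delta j}=2^j$ which is beaten by $\rho|{\alpha}|$-gain $2^{-j|{\alpha}|}$ — wait, $\rho=\delta=1$, so the gain from $\xi$-integration by parts is exactly offset; one instead exploits the annular separation $|\xi+\eta|\gtrsim 2^j$ on the support, i.e. Hörmander's observation, to get the decay). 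For the high blocks, I would pass to the local Triebel--Lizorkin square-function quantity: for $R\in\mathcal{D}_\mu$ one controls $\big(\frac1{|R|}\int_R\sum_{k\ge\mu}2^{skq}|\Pi_k T^{(i)}\big(\sum_{\ell\ge\mu}\Pi_\ell f\big)(x)|^q\,dx\big)^{1/q}$ by the analogous quantity for $f$ with $s$ replaced by $s+m$, using a vector-valued maximal inequality. Because $q<\infty$ and we are at $p=\infty$, the Fefferman--Stein inequality is not available; instead I would invoke the key lemmas of Section \ref{maximal} (the $F_\infty$-type maximal/Peetre estimates), which is precisely their purpose, to absorb the maximal functions inside the $L^q$-average over $R$, at the cost of enlarging $R$ to a fixed dilate — and here the restriction $s>\tau_q=d/\min(1,q)-d$ enters, exactly as in the $F_p$ theory, to sum the $\ell^q$-series of the shifted maximal functions.

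The main obstacle is the endpoint nature of $p=\infty$ together with $\rho=\delta=1$: one cannot simply borrow the Frazier--Jawerth/Torres molecular machinery or the Fefferman--Stein inequality, so the pointwise domination by shifted maximal operators must be organized so that the shift parameter interacts correctly with the supremum over $P\in\mathcal{D}_\mu$ in the $F_\infty$-norm, uniformly in $\mu$. Concretely, when I bound $\Pi_k T_{[a]}f$ on $P$ by maximal functions of $\Pi_\ell f$, those maximal functions see values of $f$ outside $P$, so I must further split $f=f\chi_{3P}+f\chi_{(3P)^c}$; the local part is handled by the Section \ref{maximal} lemmas applied on $3P\in$ (a dilate of a cube in $\mathcal{D}_\mu$), while the tail part $f\chi_{(3P)^c}$ contributes, after using the rapid decay of the kernels of $\Pi_k T_{[a_{j,\ell}]}$ away from the diagonal, a term dominated by $\sup_{0\le k\le\mu-1}\Vert 2^{k(s+m)}\Pi_k f\Vert_{L^\infty}$ plus a rapidly convergent sum of averages over neighboring cubes in $\mathcal{D}_\mu$, all of which are $\le$ the second term on the right of \eqref{maininequality}. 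Keeping every constant independent of $\mu$ throughout — in particular in the kernel tail estimates, where the natural length scale is $2^{-\mu}$ — is the bookkeeping that requires the most care.
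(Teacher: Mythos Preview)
Your broad outline --- a paraproduct splitting of $T_{[a]}$ according to the relation between the spatial frequency $j$ of the symbol and the input frequency $\ell$, pointwise kernel estimates on each piece, and an appeal to the $F_\infty$-maximal lemmas of Section~\ref{maximal} --- matches the paper's strategy. The paper organizes the split as $T_{[a]}=\mathfrak{S}_{[a]}^{far}+\mathfrak{S}_{[a]}^{near}$ (i.e.\ $|j-k|\ge 3$ versus $|j-k|\le 2$, with $k\approx \ell$), which is your three regimes repackaged into two.

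There is, however, a genuine gap in your plan: the proposed splitting $f=f\chi_{3P}+f\chi_{(3P)^c}$ is both unnecessary and problematic. Multiplication by a cutoff destroys the Fourier support of $\Pi_\ell f$, so the Peetre-type bounds you need afterwards are no longer available; even with a smooth cutoff you would be left with commutator terms that do not fit the scheme. The point you are missing is that Lemmas~\ref{maximal1}--\ref{marshall2} already absorb the nonlocality completely: they bound an average over a \emph{single} $P\in\mathcal{D}_\mu$ of Peetre maximal functions (which see all of $\mathbb{R}^d$) by the supremum over \emph{all} $R\in\mathcal{D}_\mu$ of plain averages --- not merely over a fixed dilate of $P$. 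The paper's flow is therefore: apply Lemma~\ref{marshall1} (annular output, far part) or Lemma~\ref{marshall2} (ball output, near part; this is exactly where $s>\tau_q$ enters) to reduce to $\sup_{R\in\mathcal{D}_\mu}\big(|R|^{-1}\int_R\sum_{k\ge\mu-h}2^{skq}|T_{[b_k]}f|^q\big)^{1/q}$; then use the kernel estimate $|T_{[b_k]}f(x)|\lesssim 2^{km}\mathfrak{M}_{\sigma,2^k}\Pi_k^*f(x)$; then apply Lemma~\ref{maximal2} once more. The few terms $\mu-h\le k\le\mu-1$ that spill below $\mu$ are trivially bounded by $\sup_{0\le k\le\mu-1}\Vert 2^{k(s+m)}\Pi_kf\Vert_{L^\infty}$; no separate treatment of ``low blocks of $f$'' is needed beforehand.

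Your account of the decay in the regime $j\gg\ell$ is also off. There is no annular separation $|\xi+\eta|\gtrsim 2^j$ to exploit here (that would be H\"ormander's twisted diagonal condition, which is an \emph{extra hypothesis}, not a feature of generic $\mathcal{S}_{1,1}^m$ symbols). The correct mechanism, used in the paper, is the vanishing moments $\int x^\alpha\phi_j(x)\,dx=0$: subtracting a Taylor polynomial of $a(\cdot,\xi)$ of order $N-1$ in $x$ before convolving with $\phi_j$, and using $|\partial_x^\alpha a(x,\xi)|\lesssim (1+|\xi|)^{m+|\alpha|}$ with $|\xi|\approx 2^k$, produces a factor $2^{-jN}2^{k(N+m+d)}=2^{-(j-k)N}2^{k(m+d)}$ in the kernel size estimate, which is exactly the $2^{-(j-k)N}$ gain needed to sum in $j$.
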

 
From Theorem \ref{mmain} we immediately deduce the following conclusion.
\begin{corollary}\label{finftyresult}
Let $m\in\mathbb{R}$ and  $0<q<\infty$. Suppose $a\in\mathcal{S}_{1,1}^m$. If $s>\tau_q$ then
\begin{equation*}\label{Finfty}
T_{[a]} : F_{\infty}^{s+m,q}\to F_{\infty}^{s,q}.
\end{equation*}
\end{corollary}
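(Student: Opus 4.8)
The plan is to obtain Corollary \ref{finftyresult} as a soft consequence of Theorem \ref{mmain}, supplemented by the already-known Besov endpoint in Theorem A (2) and by the elementary embedding $F_{\infty}^{\sigma,q}\hookrightarrow B_{\infty}^{\sigma,\infty}$, valid for every $\sigma\in\mathbb{R}$ and $0<q\le\infty$. First I would settle well-definedness: the hypothesis $s>\tau_q$ together with $\tau_q\ge 0=\tau_{\infty}$ gives $s>\tau_{\infty}$, which is exactly the range covered by Theorem A (2); hence the embedding $F_{\infty}^{s+m,q}\hookrightarrow B_{\infty}^{s+m,\infty}$ places $f$ in a space on which the partial sums in \eqref{defdef} are known to converge in $S'$. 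Thus $T_{[a]}f\in S'$ is unambiguously defined and, since \eqref{defdef} is a fixed recipe, it coincides with the distribution to which both Theorem \ref{mmain} and Theorem A (2) refer. This initial reduction through the Besov space is the one point that genuinely needs care, precisely because $T_{[a]}$ is not continuous on $S'$.

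Next, unwinding the definition of $\Vert\cdot\Vert_{F_{\infty}^{s,q}}$ for $q<\infty$, it suffices to bound the two quantities $\Vert \Pi_0 T_{[a]}f\Vert_{L^{\infty}}$ and $\sup_{l(P)<1}\big(\frac{1}{|P|}\int_P\sum_{k\ge-\log_2 l(P)}2^{skq}|\Pi_k T_{[a]}f|^q\big)^{1/q}$ by $\Vert f\Vert_{F_{\infty}^{s+m,q}}$. For the second quantity, the supremum over dyadic cubes with $l(P)<1$ is precisely $\sup_{\mu\in\mathbb{N}}\sup_{P\in\mathcal{D}_{\mu}}$, and for each fixed $\mu$ Theorem \ref{mmain} bounds the inner expression — with a constant independent of $\mu$ — by $\sup_{0\le k\le\mu-1}\Vert 2^{k(s+m)}\Pi_k f\Vert_{L^{\infty}}+\sup_{R\in\mathcal{D}_{\mu}}\big(\frac{1}{|R|}\int_R\sum_{k\ge\mu}2^{(s+m)kq}|\Pi_k f|^q\big)^{1/q}$. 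The second summand here is, for each $\mu$, literally one of the expressions under the supremum that defines $\Vert f\Vert_{F_{\infty}^{s+m,q}}$, hence is $\le\Vert f\Vert_{F_{\infty}^{s+m,q}}$; the first summand is $\le\Vert f\Vert_{B_{\infty}^{s+m,\infty}}\lesssim\Vert f\Vert_{F_{\infty}^{s+m,q}}$ by the embedding recalled above, which is itself a routine consequence of the maximal-function estimates of Section \ref{maximal}. Taking the supremum over $\mu$ then disposes of the second quantity.

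For the first quantity, the low-frequency output piece, I would simply invoke Theorem A (2) with $p=q=\infty$: since $F_{\infty}^{s,\infty}=B_{\infty}^{s,\infty}$ by definition and $s>\tau_{\infty}$, we get $\Vert \Pi_0 T_{[a]}f\Vert_{L^{\infty}}\le\Vert T_{[a]}f\Vert_{B_{\infty}^{s,\infty}}\lesssim\Vert f\Vert_{B_{\infty}^{s+m,\infty}}\lesssim\Vert f\Vert_{F_{\infty}^{s+m,q}}$, the last step being once more the embedding. Assembling the two quantities yields $\Vert T_{[a]}f\Vert_{F_{\infty}^{s,q}}\lesssim\Vert f\Vert_{F_{\infty}^{s+m,q}}$, which is the assertion. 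Since Theorem \ref{mmain} performs all the analytic work, there is essentially no obstacle remaining in this deduction beyond the consistency-of-definitions issue flagged in the first paragraph.
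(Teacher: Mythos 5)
Your deduction is correct and is essentially the paper's own proof (the paper simply declares the corollary an immediate consequence of Theorem \ref{mmain}): you take the supremum over $\mu\in\mathbb{N}$ and $P\in\mathcal{D}_{\mu}$ in (\ref{maininequality}), with the constant uniform in $\mu$, and control the right-hand side by $\Vert f\Vert_{F_{\infty}^{s+m,q}}$ via the embedding $F_{\infty}^{s+m,q}\hookrightarrow F_{\infty}^{s+m,\infty}=B_{\infty}^{s+m,\infty}$ from (\ref{bmoembedding}). The additional care you take with the low-frequency term $\Vert \Pi_0 T_{[a]}f\Vert_{L^{\infty}}$ and with the well-definedness of (\ref{defdef}) on $F_{\infty}^{s+m,q}$ --- both settled through Theorem A(2) at $p=q=\infty$, which applies since $s>\tau_q\ge 0=\tau_{\infty}$ --- is legitimate and merely makes explicit details the paper leaves implicit.
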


\begin{remark}\label{remark2.3}
Theorem \ref{mmain} is sharp in the sense that if $s\leq \tau_q$ then the inequality (\ref{maininequality}) does not hold. This will be stated in Theorem \ref{sharpresult2}. In Section \ref{intro} we used the expression ``almost sharpness" of $F_{\infty}$-boundedness because we do not obtain the sharpness of $F_{\infty}$-boundedness (for $0<q\leq 1$) in Corollary \ref{Finfty}. To be specific, the case $1<q<\infty$ in Corollary \ref{finftyresult} is sharp, but we do not conclude it for $0<q\leq 1$. See Remark \ref{remark2.8} below.\\
\end{remark}

For the case $s=0$ we have the following result. 
\begin{theorem}\label{bmo}
Suppose $m\in\mathbb{R}$ and $a\in\mathcal{S}_{1,1}^{m}$. Then  $T_{[a]}$ maps $F_{\infty}^{m,1}$ into $bmo(=F_{\infty}^{0,2})$.
\end{theorem}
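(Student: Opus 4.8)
The plan is to reduce the statement to the defining estimate for the $bmo=F_\infty^{0,2}$ norm, namely
\[
\Vert \Pi_0 T_{[a]}f\Vert_{L^\infty}+\sup_{\mu\in\mathbb N}\,\sup_{P\in\mathcal D_\mu}\Big(\frac{1}{|P|}\int_P\sum_{k=\mu}^{\infty}\big|\Pi_k T_{[a]}f(x)\big|^2\,dx\Big)^{1/2}\lesssim \Vert f\Vert_{F_\infty^{m,1}},
\]
the convergence of \eqref{defdef} on $F_\infty^{m,1}$ being obtained along the way exactly as in the proof of Theorem \ref{mmain}. One uses freely the embeddings $F_\infty^{m,1}\hookrightarrow F_\infty^{m,2}$ and $F_\infty^{m,1}\hookrightarrow B_\infty^{m,\infty}$, the latter yielding $\sup_{k\ge0}2^{km}\Vert\Pi_kf\Vert_{L^\infty}\lesssim\Vert f\Vert_{F_\infty^{m,1}}$. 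Conceptually the theorem sits at the endpoint $s=0$, $q=2$ of Corollary \ref{finftyresult} (here $\tau_2=0$), which Theorem \ref{mmain} just fails to reach; the additional $\ell^1$-summability encoded in the $F_\infty^{m,1}$-norm is what buys back the endpoint.

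First I would decompose the symbol by a Littlewood--Paley partition in the $x$-variable and write $T_{[a]}=\sum_{j,k\ge0}T_{[a_{j,k}]}=T_{\mathrm I}+T_{\mathrm{II}}+T_{\mathrm{III}}$ (understood as in \eqref{defdef}), where, with $a_{j,k}$ as in the Introduction and a fixed constant $c$, $T_{\mathrm I}$ collects the pairs with $j\le k-c$, $T_{\mathrm{II}}$ those with $|j-k|<c$, and $T_{\mathrm{III}}$ those with $j\ge k+c$, $k$ indexing the frequency shell $\{|\xi|\approx2^{k}\}$ cut out by $\widehat{\phi_k}$. This is the standard device for type $(1,1)$ operators: $T_{\mathrm I}$ has output frequency $\approx2^k$, $T_{\mathrm{III}}$ has output frequency $\approx2^j$, and $T_{\mathrm{II}}$ has output frequency in a ball $\lesssim2^k$; moreover $T_{\mathrm{II}}$ splits once more, by a frequency cutoff on the output, into a diagonal piece (output frequency still $\approx2^k$) and the genuinely forbidden piece, where the symbol's $x$-frequency is $\approx-\xi$ and the output frequency is far below $2^k$.

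For $T_{\mathrm I}$ (and, verbatim, the diagonal piece of $T_{\mathrm{II}}$) only the input norm $F_\infty^{m,2}$ should be needed: on $\{|\xi|\approx2^k\}$ the relevant symbol is $2^{km}$ times a symbol adapted to scale $2^{-k}$, so $|\Pi_\nu T_{\mathrm I}f(x)|\lesssim2^{\nu m}(K_\nu\ast|\widetilde\Pi_\nu f|)(x)$ with $K_\nu(y)=2^{\nu d}(1+2^{\nu}|y|)^{-L}$ and $\widetilde\Pi_\nu$ a fattened projection; using $(K_\nu\ast g)^2\lesssim K_\nu\ast g^2$, integrating over $P\in\mathcal D_\mu$ and summing the tails over the dyadic ancestors $2^jP$ reduces matters (writing $\mu_R=-\log_2 l(R)$) to $\sup_{R\supseteq P}\big(\frac1{|R|}\int_R\sum_{\nu\ge\mu_R}2^{2\nu m}|\Pi_\nu f|^2\big)^{1/2}\le\Vert f\Vert_{F_\infty^{m,2}}$, the ancestors of side $\ge1$ being tiled into unit cubes and absorbed into $\Vert\Pi_0f\Vert_{L^\infty}$. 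For $T_{\mathrm{III}}$, the bound $\Vert a_{j,k}(\cdot,\xi)\Vert_{L^\infty}\lesssim_N2^{-(j-k)N}(1+|\xi|)^m$ on $\{|\xi|\approx2^k\}$, $j\ge k+c$, produces \emph{genuine} geometric decay in the gap, whence $\Vert\Pi_\nu T_{\mathrm{III}}f\Vert_{L^\infty}\lesssim\Vert f\Vert_{B_\infty^{m,\infty}}$ uniformly in $\nu$; then $\frac1{|P|}\int_P|\Pi_\nu T_{\mathrm{III}}f|^2\le\Vert f\Vert_{F_\infty^{m,1}}\cdot\frac1{|P|}\int_P|\Pi_\nu T_{\mathrm{III}}f|$, and $\sum_{\nu\ge\mu}\frac1{|P|}\int_P|\Pi_\nu T_{\mathrm{III}}f|\lesssim\Vert f\Vert_{F_\infty^{m,1}}$ follows by splitting the input frequencies into $k<\mu$ (summable geometrically in $\nu-\mu$) and $k\ge\mu$ (the relevant kernels living at scale $2^{-k}\le l(P)$, together with the $\ell^1$-Carleson bound $\frac1{|R|}\int_R\sum_{k\ge\mu_R}2^{km}|\Pi_kf|\le\Vert f\Vert_{F_\infty^{m,1}}$ applied on the ancestors $R=2^jP$).

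The main obstacle is the forbidden piece of $T_{\mathrm{II}}$. I would write it as $\sum_k u_k$, where $u_k$ is built from $\Pi_kf$, has $\widehat{u_k}$ supported in a ball of radius $\sim2^k$, and satisfies $|u_k(x)|\lesssim2^{km}(K_k\ast|\Pi_kf|)(x)$ with $K_k$ living at scale $2^{-k}$; then $\Pi_\nu\sum_k u_k=\sum_{k>\nu+c}\Pi_\nu u_k$, and the decisive feature is that here there is \emph{no} decay in the frequency gap $k-\nu$ — precisely the mechanism behind the failure of $L^2$-boundedness of $Op\mathcal{S}_{1,1}^{0}$ and behind the restriction $s>\tau_q$ in Theorem \ref{mmain}. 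The idea is to re-run the critical step of the proof of Theorem \ref{mmain} at the endpoint $s=0$, $q=2$: the summation over the input frequencies $k$ that there converges only because $s>\tau_q$ must now be carried out using instead the $\ell^1$-summability of the $F_\infty^{m,1}$-norm — i.e. the uniform control of $\frac1{|R|}\int_R\sum_{k\ge\mu_R}2^{km}|\Pi_kf|$ over all dyadic cubes $R\supseteq P$ — which is exactly the extra room purchased by demanding $F_\infty^{m,1}$ rather than $F_\infty^{m,2}$ of the input. Making this work — matching the $L^2$-Carleson quantity on the output side against the $\ell^1$-type control on the input side for an operator with no frequency-gap decay, via the maximal-function lemmas of Section \ref{maximal} and careful bookkeeping of how many output frequencies $\nu\ge\mu$ a single input frequency $k$ feeds — is the heart of the argument. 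With this in hand, together with the estimates of the previous paragraph and the trivial bound for $\Pi_0T_{[a]}f$, one sums the three pieces and concludes.
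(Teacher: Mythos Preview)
Your treatment of the far piece ($T_{\mathrm I}$, $T_{\mathrm{III}}$, and the diagonal part of $T_{\mathrm{II}}$) is essentially fine and parallels the paper, which packages it as $\|\mathfrak{S}_{[a]}^{far}f\|_{F_\infty^{0,1}}\lesssim\|f\|_{F_\infty^{m,1}}$ via (\ref{mainclaim}) and then embeds $F_\infty^{0,1}\hookrightarrow bmo$. The gap is in the forbidden part of $T_{\mathrm{II}}$: you correctly identify that there is no decay in the frequency gap $k-\nu$ and then say that ``making this work\dots\ is the heart of the argument'' without supplying one. Re-running Lemma~\ref{marshall2} at $s=0$, $q=2$ does not go through --- that proof requires $r<\min(1,q)$ and $s>d/r-d$, and with $r<1$ the basic estimate $|\Pi_\nu u_k|\lesssim 2^{(k-\nu)(d/r-d)}\mathcal{M}_t^{k,\epsilon}u_k$ \emph{grows} in $k-\nu$. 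Nor does the $\ell^1$-Carleson control of $F_\infty^{m,1}$ rescue the bookkeeping you describe: a single input frequency $k$ feeds every output $\nu\in[\mu,k-c]$ with unit weight, and there is no inequality turning $\sum_k a_k<\infty$ into $\sum_\nu(\sum_{k\ge\nu}a_k)^2<\infty$ (take $a_k=1/(k\log^2 k)$). The maximal-function lemmas of Section~\ref{maximal} by themselves do not bridge an $\ell^1$-Carleson input to an $\ell^2$-Carleson output across an operator with no gap decay.

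The paper's resolution is to abandon the Littlewood--Paley form of the $F_\infty^{0,2}$ norm for the near piece $\mathfrak{S}_{[a]}^{near}=\sum_kT_{[d_k]}f$ and work instead with the classical mean-oscillation definition of $bmo$. For a cube $Q$ with $l(Q)\approx2^{-\mu}$ one splits the sum at $k=\mu$: the high frequencies $k>\mu$ are controlled by the $\ell^1$-Carleson bound from $F_\infty^{m,1}$ exactly as you anticipate, while for the low frequencies $1\le k\le\mu$ the subtraction of the mean over $Q$ manufactures the missing decay via a kernel smoothness estimate. Writing $U_k$ for the kernel of $T_{[d_k]}$, the mean value theorem together with integration by parts gives $\int_{\mathbb{R}^d}|U_k(x,z)-U_k(y,z)|\,dz\lesssim2^{km}\,2^{-(\mu-k)}$ for $x,y\in Q$, whence $|T_{[d_k]}f(x)-T_{[d_k]}f(y)|\lesssim 2^{km}\|\Pi_k^*f\|_{L^\infty}\,2^{-(\mu-k)}$; the geometric sum over $k\le\mu$ is then bounded by $\|f\|_{B_\infty^{m,\infty}}\lesssim\|f\|_{F_\infty^{m,1}}$. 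This oscillation estimate --- not any refinement of the Section~\ref{maximal} machinery on the output Littlewood--Paley pieces --- is the idea your sketch is missing.
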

The main value of this theorem is that $s$ can be taken to be $0 (=\tau_q)$ provided one increases the value of $q$ in the space that $T_{[a]}$ maps in.\\

Now we extend H\"ormander's results to $F_{\infty}$-spaces which give a sufficient condition for the boundedness for arbitrary $s\in\mathbb{R}$.
\begin{theorem}\label{s=0}
Suppose $0<q< \infty$, $s,m\in\mathbb{R}$, and $a\in\mathcal{S}_{1,1}^{m}$.
If (\ref{further}) holds for $C>1$, then $T_{[a]}$ maps $F_{\infty}^{s+m,q}$ into $F_{\infty}^{s,q}$.
\end{theorem}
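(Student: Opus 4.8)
The plan is to derive the full range of $s$ from the already-established case $s>\tau_q$ (Corollary \ref{finftyresult}) by conjugating $T_{[a]}$ with Bessel potentials, the twisted diagonal hypothesis being precisely what keeps this conjugation inside the class $\mathcal{S}^{m}_{1,1}$. Write $J^{\lambda}=(I-\Delta)^{\lambda/2}$, whose symbol is $(1+|\xi|^{2})^{\lambda/2}\in\mathcal{S}^{\lambda}_{1,0}$ and which, by the lifting property, is an isomorphism of $F_{\infty}^{t,q}$ onto $F_{\infty}^{t-\lambda,q}$ for every $t\in\mathbb{R}$. If $s>\tau_q$ there is nothing to prove, so assume $s\le\tau_q$ and fix $\lambda>0$ with $s+\lambda>\tau_q$. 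I would then write $T_{[a]}=J^{\lambda}\circ\bigl(J^{-\lambda}T_{[a]}J^{\lambda}\bigr)\circ J^{-\lambda}$ and show the middle operator equals $T_{[b]}$ for some $b\in\mathcal{S}^{m}_{1,1}$; granting that, Corollary \ref{finftyresult} (applicable since $s+\lambda>\tau_q$) gives $T_{[b]}\colon F_{\infty}^{(s+\lambda)+m,q}\to F_{\infty}^{s+\lambda,q}$, and composing with $J^{-\lambda}\colon F_{\infty}^{s+m,q}\to F_{\infty}^{(s+\lambda)+m,q}$ and $J^{\lambda}\colon F_{\infty}^{s+\lambda,q}\to F_{\infty}^{s,q}$ closes the argument.

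To identify the conjugated symbol, first note that, since $(1+|\xi|^{2})^{\lambda/2}$ does not depend on $x$, one has $T_{[a]}J^{\lambda}=T_{[a_{\lambda}]}$ with $a_{\lambda}(x,\xi):=a(x,\xi)(1+|\xi|^{2})^{\lambda/2}\in\mathcal{S}^{m+\lambda}_{1,1}$; and since the $x$-Fourier transform of $a_{\lambda}(\cdot,\xi)$ is $(1+|\xi|^{2})^{\lambda/2}\widehat{a}(\eta,\xi)$, the symbol $a_{\lambda}$ still satisfies (\ref{further}) with the same $C$. Applying $J^{-\lambda}$ on the left multiplies by $(1+|\zeta|^{2})^{-\lambda/2}$ in the output frequency $\zeta=\xi+\eta$; carrying this through the Fourier-side formula for a type-$(1,1)$ operator gives $J^{-\lambda}T_{[a_{\lambda}]}=T_{[b]}$ with
\begin{equation*}
b(x,\xi)=(1+|\xi|^{2})^{\lambda/2}\int_{\mathbb{R}^{d}}(1+|\xi+\eta|^{2})^{-\lambda/2}\,\widehat{a}(\eta,\xi)\,e^{2\pi i\langle x,\eta\rangle}\,d\eta .
\end{equation*}
Here (\ref{further}) enters decisively: on the $\eta$-support of $\widehat{a}(\cdot,\xi)$ one has $C(|\xi+\eta|+1)>|\xi|$, so $|\xi+\eta|\gtrsim|\xi|$ once $|\xi|$ is bounded away from $0$, whence (after a harmless smooth truncation) $\eta\mapsto(1+|\xi|^{2})^{\lambda/2}(1+|\xi+\eta|^{2})^{-\lambda/2}$ is a symbol of order $0$ in $\eta$ with bounds uniform in $\xi$. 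Thus $b(\cdot,\xi)$ is obtained from $a(\cdot,\xi)$ by applying a zeroth-order Fourier multiplier in $x$ (depending mildly on $\xi$), and differentiating the displayed identity in $x$ and $\xi$—each $\partial_{\xi}$ falling on the multiplier producing a factor with one extra order of decay in $\eta$—yields $|\partial_{\xi}^{\alpha}\partial_{x}^{\beta}b(x,\xi)|\lesssim(1+|\xi|)^{m-|\alpha|+|\beta|}$, i.e.\ $b\in\mathcal{S}^{m}_{1,1}$ (the range $|\xi|\lesssim1$ being harmless). Along the way one must also check that the composition $J^{-\lambda}T_{[a]}J^{\lambda}$, with $T_{[a]}$ understood through (\ref{defdef}), genuinely coincides with $T_{[b]}$ in the same sense, which follows by pushing the identities $T_{[a_{j,k}]}J^{\lambda}=T_{[(a_{\lambda})_{j,k}]}$ and $J^{-\lambda}T_{[(a_{\lambda})_{j,k}]}=T_{[b_{j,k}]}$ through the limit in (\ref{defdef}) using the continuity of $J^{-\lambda}$ on $S'$.

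The main obstacle is precisely this symbol bookkeeping for $b$: the $\xi$-derivatives are delicate because the auxiliary multiplier $(1+|\xi|^{2})^{\lambda/2}(1+|\xi+\eta|^{2})^{-\lambda/2}$ depends on $\xi$, so one has to control the interaction of differentiation with the support restriction coming from (\ref{further}), and one must reconcile the two a priori different definitions of the conjugated operator on $S'$. As a fallback that avoids the lifting, I would instead rerun the proof of Theorem \ref{mmain}: decompose $a=\sum_{j,k}a_{j,k}$ into the regimes $j\le k-c_{0}$, $|j-k|\le c_{0}$, and $j\ge k+c_{0}$. The first and third regimes already give the estimate for every $s$—in the third, the fast decay of $a_{j,k}$ for $j\gg k$ furnishes a summable $\ell^{q}$-convolution kernel—while the offending middle ``diagonal'' regime, the one that in Theorem \ref{mmain} forced $s>\tau_q$, becomes under (\ref{further}) genuinely diagonal: the condition makes the output frequency comparable to, not merely bounded by, $2^{k}$, so the previously troublesome geometric sum over $k$ collapses to boundedly many terms and converges for all $s\in\mathbb{R}$.
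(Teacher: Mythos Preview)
Your fallback approach is exactly the paper's proof, and it is by far the simpler route. The paper's argument is a one-paragraph observation: in the proof of Theorem~\ref{mmain} the restriction $s>\tau_q$ enters \emph{only} when handling the near-diagonal piece $\mathfrak{S}_{[a]}^{near}=\sum_k T_{[d_k]}$, because $\widehat{T_{[d_k]}f}$ is merely supported in a ball $\{|\xi|\lesssim 2^k\}$, forcing the use of Lemma~\ref{marshall2}. Under the twisted diagonal hypothesis~(\ref{further}) one checks directly that $\widehat{T_{[d_k]}f}$ is in fact supported in an annulus $\{|\xi|\approx 2^k\}$ (your phrase ``the output frequency comparable to, not merely bounded by, $2^k$'' captures this precisely), so Lemma~\ref{marshall1}---which carries no restriction on $s$---replaces Lemma~\ref{marshall2}, and the rest of the proof goes through unchanged. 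The ``far'' part $\mathfrak{S}_{[a]}^{far}$ was already handled for all $s$ in~(\ref{mainclaim}).

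Your primary approach via Bessel-potential conjugation $T_{[a]}=J^{\lambda}T_{[b]}J^{-\lambda}$ is a legitimate and classical idea (this style of reduction appears in H\"ormander's work on type $(1,1)$ operators), but it is substantially more laborious here. The symbol verification $b\in\mathcal{S}^{m}_{1,1}$ is genuinely nontrivial: your integral formula for $b$ pairs the tempered distribution $\widehat{a}(\cdot,\xi)$ against a merely polynomially decaying smooth function, so one must first insert a cutoff exploiting~(\ref{further}) to make the multiplier $(1+|\xi|^{2})^{\lambda/2}(1+|\xi+\eta|^{2})^{-\lambda/2}$ genuinely of order~$0$ in~$\eta$, and then argue that the associated Fourier multiplier (a Calder\'on--Zygmund operator, not bounded on $L^{\infty}$) nonetheless preserves the pointwise symbol bounds---your remark that each $\partial_\xi$ on the multiplier produces ``one extra order of decay in $\eta$'' is not quite right (it produces a factor $(1+|\xi+\eta|)^{-1}$, which is $\lesssim(1+|\xi|)^{-1}$ on the relevant support). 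All of this can be carried out, but it is considerably heavier than the paper's two-line substitution of Lemma~\ref{marshall1} for Lemma~\ref{marshall2}. In short: your fallback is the intended proof; your main route works but trades a direct Fourier-support observation for a full symbol-calculus computation.
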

Note that the counterparts for $p<\infty$ may be found in \cite{Jo}.

\subsection{Negative results}\label{negastate}\hfill

The following theorem proves the optimality of $s>\tau_{p,q}$ and $s>\tau_p$ in Theorem \ref{theoremA}.
\begin{theorem}\label{sharpresult1}
Let  $0<p,q\leq\infty$ and $m\in\mathbb{R}$.
\begin{enumerate}
\item Suppose $0<p<\infty$ or $p=q=\infty$. If $s\leq\tau_{p,q}$ then there exists $a\in\mathcal{S}_{1,1}^{m}$ so that $\Vert T_{[a]} \Vert_{F_{p}^{s+m,q}\to F_{p}^{s,q}}=\infty$.
\item If  $s\leq\tau_{p}$ then there exists $a\in\mathcal{S}_{1,1}^{m}$ so that $\Vert T_{[a]} \Vert_{B_{p}^{s+m,q}\to B_{p}^{s,q}}=\infty$.
\end{enumerate}
\end{theorem}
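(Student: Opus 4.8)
The plan is to construct, for each pair $(p,q)$ with $s\le\tau_{p,q}$ (resp. $s\le\tau_p$), an explicit symbol $a\in\mathcal S_{1,1}^m$ witnessing unboundedness, and the natural starting point is the classical Ching-type counterexample $a(x,\xi)=\sum_{j}e^{2\pi i\langle x,\theta_j\rangle}\widehat\psi(2^{-j}\xi)$ for suitable frequency points $\theta_j$ with $|\theta_j|\approx 2^j$ lying on the twisted diagonal and a bump $\psi$ with $\widehat\psi$ supported in a thin annulus; one checks directly from the Leibniz rule that this symbol lies in $\mathcal S_{1,1}^m$ after inserting the weight $2^{jm}$. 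The operator then acts on a lacunary-type input $f=\sum_j c_j e^{2\pi i\langle x,\eta_j\rangle}\widehat\phi(2^{-j}\cdot)$ essentially by the diagonal rule $T_{[a]}f\approx\sum_j c_j 2^{jm} e^{2\pi i\langle x,(\theta_j+\eta_j)\rangle}(\cdots)$, so that the output has a low-frequency (or differently located) Littlewood-Paley profile than the input; the gain $s>\tau$ is exactly what is needed to absorb the mismatch, and when $s\le\tau$ one produces a divergent norm ratio.

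The key steps, in order, are: (i) fix a smooth $\psi$ with $\widehat\psi$ supported in $\{1/2\le|\xi|\le 2\}$ and set $\theta_j=2^j\theta$ for a fixed unit vector $\theta$, defining $a$ as above with the $2^{jm}$ normalization; verify $a\in\mathcal S_{1,1}^m$. (ii) Reduce to the model case $m=0$ by the standard fact that composing with the lifting operator $(I-\Delta)^{m/2}$ (a symbol in $\mathcal S_{1,0}^m\subset\mathcal S_{1,1}^m$ with an $\mathcal S_{1,1}$-bounded inverse on the relevant spaces) conjugates the $(s+m,q)\to(s,q)$ question for order $m$ to the $(s,q)\to(s,q)$ question for order $0$; then one only needs the endpoint $s=\tau_{p,q}$ (resp. $s=\tau_p$), since any $s<\tau$ follows a fortiori from embeddings. (iii) For the $F$-space statement with $0<p<\infty$: choose $f$ to be a finite sum $f=\sum_{j=1}^N c_j m_j$ where $m_j$ is a normalized bump at frequency $\theta_j$ and the $c_j$ are chosen (following the Frazier--Jawerth/sequence-space philosophy) so that $\|f\|_{F_p^{\tau,q}}\lesssim 1$ while the pieces $T_{[a]}m_j$ all have the same low frequency and add up constructively on a fixed ball, forcing $\|T_{[a]}f\|_{F_p^{\tau,q}}\gtrsim N^{\delta}$ for some $\delta>0$; letting $N\to\infty$ and then packing infinitely many such blocks at well-separated scales gives a single symbol with infinite operator norm. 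The choice of $c_j$ is dictated by which of $p$ or $q$ realizes the minimum in $\tau_{p,q}$: when $q\le\min(1,p)$ one exploits the $\ell^q$ concavity, and when $p\le\min(1,q)$ one exploits the $L^p$ concavity over disjoint spatial supports — this is the point where the proof genuinely splits into cases. (iv) For the $p=q=\infty$ case of part (1) and for the $B$-space statement in part (2), the same construction works with $\ell^q(L^p)$ norms replacing $L^p(\ell^q)$; here $\tau_p$ only sees $\min(1,p)$ so only the "$p\le 1$" concavity mechanism is needed, and the $F_\infty^{s,\infty}=B_\infty^{s,\infty}$ identification handles the corner.

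The main obstacle I expect is (iii): making the constructive interference precise in the quasi-Banach range $\min(p,q)\le 1$. For $p,q>1$ one can cite the duality/Nikolskii arguments already in \cite{Bou2,Chi,Jo1}, but when $\min(p,q)\le1$ those are unavailable, and one must instead carefully track the $f_p^{s,q}$ sequence-space norm of the coefficients and show that the atoms $m_j$ are genuinely "orthogonal" for the input norm yet "aligned" after applying $T_{[a]}$. Concretely, the delicate estimate is the lower bound $\|T_{[a]}f\|_{F_p^{s,q}}\gtrsim N^{\delta}$: one needs the tails of the individual $T_{[a]}m_j$ (which are Schwartz but not compactly supported) to not destroy the coherent sum, which requires a quantitative decay estimate uniform in $j$, and one needs the frequency supports of the outputs to be genuinely overlapping (not merely nested), which forces a judicious choice of the secondary frequencies $\eta_j$ relative to $\theta_j$. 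Once this single "block" estimate is in hand, the passage to an infinite symbol with $\|T_{[a]}\|=\infty$ is routine diagonalization, choosing block sizes $N_\ell\to\infty$ and scale-separations large enough that distinct blocks do not interact in the relevant (quasi-)norms.
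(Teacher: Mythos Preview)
Your overall framework---a Ching-type symbol $a(x,\xi)=\sum_k 2^{km}\widehat\psi(2^{-k}\xi)e^{2\pi i\langle \theta_k,x\rangle}$ with $|\theta_k|\approx 2^k$---is exactly what the paper uses, and your case split according to whether $\tau_{p,q}=d/p-d$ or $\tau_{p,q}=d/q-d$ is the right one. The case $p\le 1$ (``$L^p$ concavity'') is in the correct spirit, though the paper's bumps $g_{t_n}(x)=2^{t_nd/p}g(2^{t_n}x)$ are all concentric rather than spatially disjoint, and the lower bound is read off on dyadic annuli.

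The genuine gap is the case $0<q\le 1$, $q<p$, where $\tau_{p,q}=d/q-d>0$. Your proposed mechanism, ``$\ell^q$ concavity'' with outputs adding constructively at a common low frequency, does not produce blow-up here: if the $m_j$ share a spatial profile $g$ and $T_{[a]}m_j\approx g$, then the input norm is $(\sum_j|c_j|^q2^{st_jq})^{1/q}\|g\|_{L^p}$ while the output is $|\sum_j c_j|\,\|g\|_{L^p}$, and for $s>0$ no choice of $c_j$ makes the ratio diverge (e.g.\ $c_j=2^{-st_j}$ gives $N$ versus $N^{1/q}$, and $q<1$). The paper's proof of this case is of a different nature: the test data $g_{t_k}$ are taken to be \emph{random} functions $h_k^{\omega,t}=\sum_Q r_Q(t)\theta_Q(\omega)\chi_Q$, where $\{\theta_Q\}$ are independent Bernoulli$(1/L)$ selectors on the dyadic subcubes of $[0,1]^d$ and $\{r_Q\}$ are Rademacher signs. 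The expected input $F_p^{s,q}$ norm is then bounded uniformly in $L$ by a maximal lemma of Christ--Seeger, while the expected output norm is bounded below by first applying Khintchine's inequality to convert the inner $\ell^q$ sum to $\ell^2$, and then exploiting the pointwise positivity $\Gamma_{t_l}\ast\chi_Q\ge 2^{-t_nd}$ on nearby cubes together with a simple lower bound on the probability that exactly one selector in a block fires. This probabilistic machinery is the essential missing ingredient in your plan.

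Two minor remarks. First, your claimed reduction ``only the endpoint $s=\tau_{p,q}$ is needed, the rest by embeddings'' is not automatic, since both the domain and the target change with $s$; the paper simply carries the parameter $s\le\tau_{p,q}$ through the estimates directly. Second, no diagonalization over symbols is needed: the paper's symbol is fixed once and for all, and unboundedness is witnessed by a sequence of test functions $f_L$ (deterministic or random, according to the case) with $\|T_{[a]}f_L\|/\|f_L\|\to\infty$ as $L\to\infty$.
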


The optimality of $s>\tau_q$ in Theorem \ref{mmain} follows from the next result.
\begin{theorem}\label{sharpresult2}
Let  $0<q< \infty$, $m\in\mathbb{R}$, and $\mu\in\mathbb{N}$.   If $s\leq\tau_q$ then there exists $a\in\mathcal{S}_{1,1}^{m}$ and $f\in S'$ such that
\begin{equation*}
\sup_{0\leq k\leq \mu-1}{\big\Vert 2^{k(s+m)}\Pi_kf \big\Vert_{L^{\infty}} }+\sup_{R\in\mathcal{D}_{\mu}}\Big(\frac{1}{|R|}\int_R\sum_{k=\mu}^{\infty}{2^{(s+m)kq} \big|\Pi_kf(x) \big|^q} \Big)^{1/q}<\infty,
\end{equation*}
\begin{equation*}
\sup_{P\in\mathcal{D}_{\mu}}\Big(\frac{1}{|P|}\int_P\sum_{k=\mu}^{\infty}{2^{skq} \big|\Pi_kT_{[a]}f(x) \big|^q} \Big)^{1/q}=\infty.
\end{equation*}

\end{theorem}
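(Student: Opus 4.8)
The plan is to construct a counterexample by adapting the classical Ching-type construction used for the $L^2$ case to the scaling relevant for $F_\infty^{s,q}$, but now exploiting the fact that the $F_\infty$-norm at scale $\mu$ only sees the average over a single dyadic cube $P\in\mathcal{D}_\mu$. First I would choose a symbol of the form $a(x,\xi)=\sum_{\nu}c_\nu e^{2\pi i\langle x,\eta_\nu\rangle}\widehat{\psi}(2^{-\nu}\xi-\xi_0)$ where $\xi_0$ and $\eta_\nu$ are chosen so that $T_{[a]}$ shifts the frequency annulus at scale $\nu$ to one near the origin (or near a low fixed scale), i.e. it behaves like $e^{2\pi i\langle x,\eta_\nu\rangle}$ applied to a Littlewood-Paley piece; the decay $c_\nu=2^{-\nu m}$ makes $a\in\mathcal{S}_{1,1}^m$. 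This is the standard mechanism by which type $(1,1)$ operators fail boundedness: they collapse high frequencies onto low ones, so a bounded input produces an output whose Littlewood-Paley pieces at a fixed scale pile up incoherently.

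Next I would pick the input $f$ to be a lacunary-type sum $f=\sum_{\nu\geq\mu}2^{-\nu(s+m)}b_\nu$ where each $b_\nu$ is (up to smooth truncation) a single exponential $e^{2\pi i\langle x,\zeta_\nu\rangle}$ modulated by a bump adapted to the cube $P$, with $|\zeta_\nu|\approx 2^\nu$. The normalization $2^{-\nu(s+m)}$ is exactly what makes the right-hand side of the desired inequality — the $F_\infty^{s+m,q}$-type quantity of $f$ at scale $\mu$ — finite and bounded by a constant (this uses $q<\infty$ and the fact that $|\Pi_k f|\lesssim 2^{-k(s+m)}$ with the $\ell^q$-sum over $k\geq\mu$ of $2^{(s+m)kq}2^{-kq(s+m)}=1$ being only finitely-summable-per-scale but uniformly bounded because each $b_\nu$ lives at one scale; care is needed to keep the low-frequency terms $0\leq k\leq\mu-1$ under control, which is easy since we can take them to vanish). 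Then I would compute $T_{[a]}f$: each piece $2^{-\nu(s+m)}b_\nu$ is mapped to roughly $2^{-\nu s}$ times a single exponential at a \emph{fixed} scale $k_0$ (or at scale $\mu$), so that $\Pi_{k_0}T_{[a]}f\approx \sum_\nu 2^{-\nu s}(\text{unimodular phase})\cdot(\text{bump on }P)$. Evaluating the left-hand side of the conclusion: the contribution at the single scale where all the pieces land is $2^{s k_0 q}\big|\sum_\nu 2^{-\nu s}e^{2\pi i\langle x,\vartheta_\nu\rangle}\big|^q$ averaged over $P$.

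The crux is then to show this average diverges precisely when $s\leq\tau_q$. Here I would use the key distinction between $p<\infty$ and $p=\infty$: for $p<\infty$ one would need the bumps to be spatially separated, but for $F_\infty$ we average over one cube $P$ of side $2^{-\mu}$, so we can stack \emph{all} the bumps on the \emph{same} cube. When $s<\tau_q=d/\min(1,q)-d$, i.e. $s q< d$ when $q\leq 1$ and $s<0$ trivially, the geometric-type sum $\sum_\nu 2^{-\nu s}$ diverges (for $s\le 0$) or, more delicately for $0<q\le 1$ and $0<s\le d/q-d$, an $\ell^q$-over-$\nu$ versus $L^1$-average interchange (a Khintchine/randomization argument over the phases, or an explicit arithmetic choice of $\vartheta_\nu$ making the exponentials nearly aligned on a subset of $P$ of proportional measure) forces the average of the $q$-th power to be comparable to $\sum_\nu 2^{-\nu s q}\cdot 2^{sk_0 q}$, which is infinite exactly at the endpoint. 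The endpoint case $s=\tau_q$ requires inserting a logarithmically divergent weight $c_\nu=2^{-\nu(s+m)}\nu^{-1/q}$ (and correspondingly adjusting so the symbol still lies in $\mathcal{S}_{1,1}^m$, which it does since logarithmic factors are harmless) so that the right side has $\sum_\nu \nu^{-1}<\infty$ fails — wait, rather $\sum_\nu \nu^{-1}=\infty$, so one instead truncates to $N$ terms, gets the right side $\lesssim (\log N)^{1/q}$ on one side after a more careful accounting, no: one arranges the right side bounded by an absolute constant while the left side grows like $(\log N)^{1/q}\to\infty$, then takes $N\to\infty$ (or a fixed $f\in S'$ realizing the full sum). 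I expect the main obstacle to be exactly this endpoint bookkeeping together with verifying that the phases $\vartheta_\nu$ can be chosen — via a Rudin–Shapiro or random-sign device, or a Weyl-equidistribution argument — so that $\frac{1}{|P|}\int_P|\sum_{\nu\le N}2^{-\nu s}e^{2\pi i\langle x,\vartheta_\nu\rangle}|^q\,dx$ genuinely grows and is not accidentally cancelled; handling $0<q\le 1$ here (where there is no orthogonality to exploit and $\|\cdot\|_{L^q}$ is only a quasi-norm) is the delicate point, and is presumably why the authors emphasize that the standard $p,q>1$ technique fails and new ideas are needed for $\min(p,q)\le 1$.
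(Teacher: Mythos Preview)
Your proposal has a genuine gap when $0<q\le 1$, which is precisely the case the paper singles out as requiring new ideas (the case $q>1$ is handled by the Ching-type argument you sketch, and the paper disposes of it in Remark~\ref{remark2.8}). With your construction --- a lacunary sum $f=\sum_{\nu}2^{-\nu(s+m)}b_\nu$ of $N$ bump-modulated exponentials stacked on a single cube $P$ --- the input quantity is not bounded by a constant: since $|\Pi_\nu f|\approx 2^{-\nu(s+m)}$ on $P$, the $\ell^q$-sum $\sum_{\nu}2^{(s+m)\nu q}|\Pi_\nu f|^q\approx N$, so the right-hand side is $\approx N^{1/q}$. On the output side, even in the most optimistic scenario where the collapsed phases align perfectly, the piece at the fixed scale $k_0$ gives at best $\sum_{\nu\le N}2^{-\nu s}$, which for $s=0$ is $\approx N$ and for $s>0$ is $O(1)$. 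Hence the ratio (output)/(input) is at most $N^{1-1/q}$ for $s=0$ and tends to $0$ for $s>0$; neither diverges when $q\le 1$. Since for $q<1$ you must reach all $s\le\tau_q=d/q-d>0$, your mechanism cannot close the gap, and neither Rudin--Shapiro signs, Khintchine on the phases, nor Weyl equidistribution help: they only improve the output by a factor $N^{1/2}$ at best, still short of $N^{1/q}$.

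The paper's construction is conceptually different. Instead of exponentials, it takes $g_{t_k}$ to be \emph{random characteristic functions} $h_k^{\omega,t,\mu}=\sum_{Q}r_Q(t)\theta_Q(\omega)\chi_Q$, where the sum runs over dyadic subcubes $Q$ of $[0,2^{-\mu}]^d$ at scale $2^{-t_k-M}$, the $r_Q$ are Rademacher signs, and the $\theta_Q$ are independent $\{0,1\}$-valued selectors with $\mathbb{P}(\theta_Q=1)=1/L$. The Christ--Seeger lemma (Lemma~\ref{ChSe}) controls the maximal-function average of this sparse random sum, forcing the input quantity to be $\lesssim 1$ in expectation \emph{uniformly in $L$}. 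For the output, the operator $T_{[a]}$ (with the standard symbol~(\ref{gggfff})) spreads the small cubes at scale $t_{n+l}$ over the coarser scale $t_l$ via $\Gamma_{t_l}\ast h_{n+l}$; a Khintchine-in-$t$ step followed by a counting argument on how many selected subcubes sit inside each coarse cube produces the lower bound $\gtrsim L^{-(s+d-d/q)/(2d)}(\log L)^{1/2}$, which diverges exactly when $s\le d/q-d=\tau_q$. The essential point you are missing is that the randomness must be \emph{spatial} (which cubes survive), not merely in the phases, so that the input stays bounded while the output exploits the $2^{t_n d}$-fold multiplicity of small cubes inside large ones to generate the exponent $d/q-d$.
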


\begin{remark}\label{remark2.8}
Theorem \ref{sharpresult1} and Theorem \ref{sharpresult2} for the case $1<p,q$ can be proved by using the idea of Ching \cite{Chi}, which is to construct $a\in\mathcal{S}_{1,1}^{m}$ and $f\in F_{p}^{s+m,q}$ such that 
\begin{equation}\label{pq1}
\Vert f\Vert_{F_{p}^{s+m,q}}\lesssim \Big( \sum_{k=1}^{\infty}{|v_k|^{p}}\Big)^{1/p} \quad\text{or}\quad \Vert f\Vert_{F_{p}^{s+m,q}}\lesssim \Big( \sum_{k=1}^{\infty}{|v_k|^{q}}\Big)^{1/q} 
\end{equation} and 
\begin{equation}\label{111}
\Vert T_{[a]}f\Vert_{F_{p}^{s,q}}\gtrsim \sum_{k=1}^{\infty}{2^{-sk}|v_k|}
\end{equation} where $\{v_k\}$ is a sequence of positive numbers
($\Vert f\Vert_{B_p^{s+m,q}}\lesssim \big( \sum_{k=1}^{\infty}{|v_k|^q}\big)^{1/q}$ and $\Vert T_{[a]}f\Vert_{B_p^{s,q}}\gtrsim \sum_{k=1}^{\infty}|v_k|$ for the proof of Theorem \ref{sharpresult1} (2)).
Then choose $\{v_k\}$ such that $(\ref{pq1})<\infty$, but $(\ref{111})=\infty$ for $s\leq 0$.
 Indeed, similar sharp results for $1<p<\infty, 1<q\leq \infty$ are found in \cite{Bou2} and \cite{Jo1} where the same technique is applied.  
Furthermore, the case $1<q<\infty$ in Theorem \ref{sharpresult2} can be proved in a similar way. Indeed, for a sequence of positive numbers $\{v_k\}$ to be chosen later, let
\begin{equation*}
a(x,\xi):=\sum_{k=10}^{\infty}{2^{km}\widehat{{\phi_k^*}}(\xi)e^{2\pi i\langle 2^{-k}e_1,\xi\rangle}e^{-2\pi i\langle 2^ke_1,x\rangle}}
\end{equation*} 
\begin{equation*}
f_L(x):=\sum_{k=10}^{L}{v_{k}2^{-t_k(s+m)}\mathcal{G}(x-2^{-t_k}e_1)e^{2\pi i\langle 2^{t_k}e_1,x\rangle}}
\end{equation*}
where $t_k:=10k$, $e_1:=(1,0,\dots,0)\in\mathbb{Z}^d$, ${\phi_k^*}:=\phi_{k-1}+\phi_{k}+\phi_{k+1}$, and $\mathcal{G}\in S$ satisfies $Supp(\widehat{\mathcal{G}})\subset \{|\xi|\leq 1\}$.
 Then clearly $a\in\mathcal{S}_{1,1}^{m}$, and 
 \begin{equation*}
 \Vert f_L\Vert_{F_{\infty}^{s+m,q}}\lesssim \Big(\sum_{k=10}^{L}{v_{k}^q} \Big)^{1/q}.
 \end{equation*}
 Moreover, due to the fact that $\widehat{{\phi_{t_k}^*}}(\xi)\widehat{\mathcal{G}}(\xi-2^{t_k}e_1)=\widehat{\mathcal{G}}(\xi-2^{t_k}e_1)$ one has
 \begin{equation*}
 T_{[a]}f_L(x)=\mathcal{G}(x)\sum_{k=10}^{L}{2^{-st_k}v_k}
 \end{equation*} and this yields
 \begin{equation*}
 \big\Vert T_{[a]}f_L\big\Vert_{F_{\infty}^{s,q}}\approx \sum_{k=10}^{L}{2^{-st_k}v_k}.
 \end{equation*}
 Choose a sequence $\{v_k\}$ so that $\sum_{k=10}^{\infty}{2^{-st_k}v_k}=\infty$ and $\sum_{k=10}^{\infty}{v_k^q}<\infty$. By letting $L\to\infty$ we are done.
 Actually, this proves the sharpness of Corollary \ref{finftyresult} for $1<q<\infty$.

 However, when $\min{(p,q)}\leq 1$ this method does not work and it is not easy to verify the optimality of $s>\tau_{p,q}=d/\min{(p,q)}-d$ and $s>\tau_q=d/q-d$. 
 In this paper we will consider only the case $\min{(p,q)}\leq 1$. 
\end{remark}

\section{\textbf{Maximal inequalities  and Key lemmas}}\label{maximal}

\subsection{Maximal inequalities}

A crucial tool in theory of function spaces is maximal inequalities of Fefferman-Stein \cite{Fe_St} and Peetre \cite{Pe}.

 Let $\mathcal{M}$  be the Hardy-Littlewood maximal operator, defined by
 \begin{equation*}
 \mathcal{M}f(x):=\sup_{x\in Q}{\frac{1}{|Q|}\int_Q{|f(y)|}dy}
 \end{equation*} where the supremum is taken over all cubes containing $x$, and for $0<t<\infty$ let $\mathcal{M}_tf:=\big(  \mathcal{M}(|f|^t) \big)^{1/t}$.
Then Fefferman-Stein's vector-valued maximal inequality in \cite{Fe_St} says that for $0<r<p,q<\infty$ one has
\begin{equation}\label{hlmax}
\Big\Vert  \Big(\sum_{k}{(\mathcal{M}_{r}f_k)^q}\Big)^{1/{q}} \Big\Vert_{L^p} \lesssim  \Big\Vert \Big( \sum_{k}{|f_k|^q}  \Big)^{1/{q}}  \Big\Vert_{L^p}.
\end{equation} 
Note that (\ref{hlmax}) also holds when $q=\infty$.

Now for $k\in\mathbb{Z}$ and $\sigma>0$ we define the Peetre maximal operator $\mathfrak{M}_{\sigma,2^k}$ by 
\begin{equation*}
\mathfrak{M}_{\sigma,2^k}f(x):=\sup_{y\in\mathbb{R}^d}{\frac{|f(x-y)|}{(1+2^k|y|)^{\sigma}}}.
\end{equation*}
For $r>0$ let $\mathcal{E}(r)$ be the space of tempered distributions whose Fourier transforms are supported in $\{\xi:|\xi|\leq 2r\}$. 
As shown in \cite{Pe} one has the majorization 
\begin{equation}\label{maximalbound}
\mathfrak{M}_{d/r,2^k}f(x)\lesssim \mathcal{M}_rf(x) 
\end{equation}  if  $f\in \mathcal{E}(2^k)$. Then  via (\ref{hlmax}) and (\ref{maximalbound}) the following maximal inequality holds. Suppose $0<p<\infty$, $0<q\leq \infty$. Then for $f_k\in\mathcal{E}(2^k)$
\begin{equation}\label{max}
\Big\Vert  \Big(\sum_{k}{(\mathfrak{M}_{d/r,2^k}f_k)^q}\Big)^{1/{q}} \Big\Vert_{L^p} \lesssim  \Big\Vert \Big( \sum_{k}{|f_k|^q}  \Big)^{1/{q}}  \Big\Vert_{L^p} \quad \text{for}~r<p,q.
\end{equation}

For $\epsilon\geq0$, $r>0$, and $k\in\mathbb{Z}$, we now introduce a variant of Hardy-Littlewood maximal operators $\mathcal{M}_r^{k,\epsilon}$, which is defined by
\begin{align*}
\mathcal{M}_r^{k,\epsilon}f(x)&:=\sup_{2^kl(Q)\leq 1}{\Big( \frac{1}{|Q|}\int_{Q}{|f(x)|^r}dx  \Big)^{1/r}}\\
  &\relphantom{=} +\sup_{2^{k}l(Q)>1}{\big(2^kl(Q)\big)^{-\epsilon}\Big( \frac{1}{|Q|}\int_Q{|f(x)|^r}dx  \Big)^{1/r}}. \nonumber
\end{align*}
Note that $\mathcal{M}_r^{k,0}f(x)\approx \mathcal{M}_rf(x)$. It is proved in \cite{Park1} that for $r<t$ and $f\in \mathcal{E}(A2^k)$ for some $A>0$
\begin{equation}\label{pointmaximal}
\mathfrak{M}_{d/r,2^k}f(x)\lesssim_A \mathcal{M}_t^{k,d/r-d/t}f(x)\lesssim \mathcal{M}_tf(x) \quad \text{uniformly in}~ k.
\end{equation}
Furthermore, the following maximal inequalities hold.
\begin{lemma}\label{maximal1}\cite{Park1}
Let $0<r<q<\infty$, $\epsilon>0$, and $\mu\in\mathbb{Z}$. For $k\in\mathbb{Z}$ let $f_k\in\mathcal{E}(A2^k)$  for some $A>0$.
Then one has
\begin{equation*}
\sup_{P\in\mathcal{D}_{\mu}}{\Big(  \frac{1}{|P|}\int_P{  \sum_{k=\mu}^{\infty}{  \big( \mathcal{M}_r^{k,\epsilon}f_k(x)  \big)^{q}      }    }dx  \Big)^{1/q}}  \lesssim_A \sup_{R\in\mathcal{D}_{\mu}}{\Big(  \frac{1}{|R|}\int_R{  \sum_{k=\mu}^{\infty}{   |f_k(x)|^q   }    }dx  \Big)^{1/q}}. 
\end{equation*}
Here, the implicit constant of the inequality is independent of $\mu$.
\end{lemma}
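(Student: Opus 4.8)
The plan is, for a fixed $P\in\mathcal{D}_{\mu}$, to split $\mathcal{M}_r^{k,\epsilon}f_k$ into the contribution of cubes no larger than $P$ and that of cubes larger than $P$, and to treat the former by the classical Fefferman--Stein inequality (\ref{hlmax}) after localizing in space, the latter by exploiting the weight $(2^kl(Q))^{-\epsilon}$. We may assume the right-hand side
\[
\mathbf{N}:=\sup_{R\in\mathcal{D}_{\mu}}\Big(\frac{1}{|R|}\int_R\sum_{k=\mu}^{\infty}|f_k(x)|^q\,dx\Big)^{1/q}
\]
is finite (the spectral support hypothesis on the $f_k$ is in fact not needed for this estimate). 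Fix $P\in\mathcal{D}_\mu$, $x\in P$, and $k\ge\mu$; then $2^{-k}\le l(P)$. If $Q\ni x$ has $l(Q)\le l(P)$ then $Q\subset 3P$, so the local term of $\mathcal{M}_r^{k,\epsilon}f_k(x)$ (cubes with $l(Q)\le 2^{-k}$) and the portion of the global term with $2^{-k}<l(Q)\le l(P)$ are each dominated by $\mathcal{M}_r(f_k\chi_{3P})(x)$, since the weight is $\le 1$. If instead $Q\ni x$ has $l(Q)>l(P)=2^{-\mu}$, then $Q$ lies in a union $\widehat Q$ of $\mathcal{D}_\mu$-cubes with $|\widehat Q|\approx|Q|$, so by Jensen's inequality ($r<q$) and the definition of $\mathbf{N}$,
\[
\Big(\frac{1}{|Q|}\int_Q|f_k|^r\Big)^{1/r}\le\Big(\frac{1}{|Q|}\int_Q|f_k|^q\Big)^{1/q}\lesssim\Big(\frac{1}{|\widehat Q|}\int_{\widehat Q}\sum_{l=\mu}^{\infty}|f_l|^q\Big)^{1/q}\lesssim\mathbf{N},
\]
while $(2^kl(Q))^{-\epsilon}\lesssim 2^{-(k-\mu)\epsilon}$ because $l(Q)>2^{-\mu}$. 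Altogether, for every $x\in P$ and $k\ge\mu$,
\[
\mathcal{M}_r^{k,\epsilon}f_k(x)\lesssim\mathcal{M}_r(f_k\chi_{3P})(x)+2^{-(k-\mu)\epsilon}\,\mathbf{N}.
\]

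Next I would raise this to the power $q$, sum over $k\ge\mu$, integrate over $P$, and use $(a+b)^q\lesssim_q a^q+b^q$. The second term contributes $\lesssim|P|\,\mathbf{N}^q\sum_{n\ge 0}2^{-n\epsilon q}\lesssim|P|\,\mathbf{N}^q$, the geometric series converging because $\epsilon,q>0$ and---crucially---having a value independent of $\mu$. For the first term, enlarging the domain of integration to $\mathbb{R}^d$ and applying (\ref{hlmax}) with $p=q$ (legitimate since $0<r<q<\infty$), and then covering $3P$ by $O(1)$ cubes of $\mathcal{D}_\mu$, gives
\[
\int_P\sum_{k=\mu}^{\infty}\big(\mathcal{M}_r(f_k\chi_{3P})(x)\big)^q\,dx\lesssim\int_{3P}\sum_{k=\mu}^{\infty}|f_k(x)|^q\,dx\lesssim|P|\,\mathbf{N}^q.
\]
Combining the two contributions yields $\frac{1}{|P|}\int_P\sum_{k\ge\mu}(\mathcal{M}_r^{k,\epsilon}f_k)^q\,dx\lesssim\mathbf{N}^q$ with a constant depending only on $d,r,q,\epsilon$; taking the $q$-th root and the supremum over $P\in\mathcal{D}_\mu$ is the assertion, and since no constant involved $\mu$, the estimate is uniform in $\mu$.

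The routine points---that a cube through $x\in P$ of side $\le l(P)$ stays inside $3P$, that a cube of side $\lambda\,l(P)$ is comparable to a union of $\approx\lambda^d$ level-$\mu$ dyadic cubes, and the bookkeeping with $(a+b)^q$---I would only sketch. The one genuinely delicate step is the large-cube part of the global term: the factor $(2^kl(Q))^{-\epsilon}$ is indispensable, since without the resulting decay $2^{-(k-\mu)\epsilon}$ the sum over $k\ge\mu$ would diverge, and it is precisely there that one must pass from averages over arbitrarily large, non-dyadic cubes to the dyadic quantity $\mathbf{N}$ through the covering argument. Keeping every implicit constant independent of $\mu$ is the other thing to watch, but it is automatic once the estimates are phrased in terms of $\mathbf{N}$ and of geometric series whose ratio depends only on $\epsilon$ and $q$.
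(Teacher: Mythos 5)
Your proof is correct. Note first that the paper does not prove Lemma \ref{maximal1} at all: it is imported from \cite{Park1}, so there is no in-paper argument to compare with, and your write-up in effect supplies a self-contained, elementary proof. The decomposition works exactly as you say: for $x\in P\in\mathcal{D}_\mu$ and $k\ge\mu$, every cube $Q\ni x$ with $l(Q)\le l(P)$ lies in $3P$ and carries weight $\le 1$, so that part is controlled by $\mathcal{M}_r(f_k\chi_{3P})(x)$, and (\ref{hlmax}) with $p=q$ (which here is nothing more than termwise $L^{q/r}$-boundedness of $\mathcal{M}$, legitimate since $r<q$) plus the covering of $3P$ by its $3^d$ dyadic neighbours in $\mathcal{D}_\mu$ gives the bound $|P|\mathbf{N}^q$; for cubes with $l(Q)>2^{-\mu}$, H\"older ($r<q$), the passage to the union $\widehat Q$ of $\mathcal{D}_\mu$-cubes with $|\widehat Q|\approx|Q|$ (the average over a union of equal-size dyadic cubes being at most the maximum of the averages, which is precisely what lets the right-hand side be a supremum over $\mathcal{D}_\mu$ only, cf.\ Remark \ref{remark3.5}), and the decay $(2^kl(Q))^{-\epsilon}\le 2^{-(k-\mu)\epsilon}$ summed geometrically in $k$ give the second contribution with constants depending only on $d,r,q,\epsilon$, hence uniformly in $\mu$. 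Your side remark is also accurate and worth keeping: in contrast to Lemma \ref{maximal2}, which uses the spectral hypothesis through the pointwise estimate (\ref{pointmaximal}), the inequality for $\mathcal{M}_r^{k,\epsilon}$ needs no Fourier-support assumption on the $f_k$ (and your constant is even independent of $A$), the hypotheses $\epsilon>0$ and $r<q$ being the ones genuinely used.
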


As an immediate consequence of the pointwise estimate (\ref{pointmaximal}) and Lemma \ref{maximal1} one obtains
\begin{lemma}\label{maximal2}\cite{Park1}
Let $0<r<q<\infty$ and $\mu\in\mathbb{Z}$. For $k\in\mathbb{Z}$ let  $f_k\in\mathcal{E}(A2^k)$  for some $A>0$.
Then one has
\begin{equation*}
\sup_{P\in\mathcal{D}_{\mu}}{\Big(  \frac{1}{|P|}\int_P{  \sum_{k=\mu}^{\infty}{  \big(\mathfrak{M}_{d/r,2^k}f_k(x) \big)^{q}      }    }dx  \Big)^{1/q}} \lesssim_A \sup_{R\in\mathcal{D}_{\mu}}{\Big(  \frac{1}{|R|}\int_R{  \sum_{k=\mu}^{\infty}{   |f_k(x)|^q   }    }dx  \Big)^{1/q}}.
\end{equation*}
Here, the implicit constant of the inequality is independent of $\mu$.
\end{lemma}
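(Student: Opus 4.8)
The plan is to obtain Lemma~\ref{maximal2} directly from the pointwise majorization \eqref{pointmaximal} together with the maximal inequality of Lemma~\ref{maximal1}; no new machinery is needed, only a correct choice of auxiliary exponents. First I would fix an exponent $t$ with $r<t<q$ and set $\epsilon:=d/r-d/t$. Since $r<t$ we have $1/r>1/t$, hence $\epsilon>0$. Because each $f_k$ belongs to $\mathcal{E}(A2^k)$, the estimate \eqref{pointmaximal}, applied with this pair $r<t$, yields the uniform-in-$k$ bound
\[
\mathfrak{M}_{d/r,2^k}f_k(x)\lesssim_A \mathcal{M}_t^{k,\epsilon}f_k(x),\qquad x\in\mathbb{R}^d,\ k\in\mathbb{Z},
\]
where the implicit constant depends only on $A$ and on the fixed exponents $r,t$, and in particular not on $\mu$ or on $k$.

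Next I would raise both sides of the displayed inequality to the power $q$, sum over $k\ge\mu$, integrate over an arbitrary $P\in\mathcal{D}_{\mu}$, divide by $|P|$, take the $q$-th root, and finally take the supremum over $P\in\mathcal{D}_{\mu}$. This gives
\[
\sup_{P\in\mathcal{D}_{\mu}}\Big(\frac{1}{|P|}\int_P\sum_{k=\mu}^{\infty}\big(\mathfrak{M}_{d/r,2^k}f_k(x)\big)^q\,dx\Big)^{1/q}
\lesssim_A \sup_{P\in\mathcal{D}_{\mu}}\Big(\frac{1}{|P|}\int_P\sum_{k=\mu}^{\infty}\big(\mathcal{M}_t^{k,\epsilon}f_k(x)\big)^q\,dx\Big)^{1/q}.
\]

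Finally I would invoke Lemma~\ref{maximal1}, applied with the exponents $t<q$ in the roles of $r<q$ and with the positive number $\epsilon=d/r-d/t$ just chosen; the hypothesis $f_k\in\mathcal{E}(A2^k)$ is exactly what Lemma~\ref{maximal1} requires, so its right-hand side bounds the quantity above by $\sup_{R\in\mathcal{D}_{\mu}}\big(\frac{1}{|R|}\int_R\sum_{k=\mu}^{\infty}|f_k(x)|^q\,dx\big)^{1/q}$, with constant depending on $A$ but independent of $\mu$. Concatenating the two displayed estimates proves the lemma. There is essentially no obstacle here beyond bookkeeping: the only delicate point is to pick $t$ \emph{strictly} between $r$ and $q$, so that simultaneously $\epsilon=d/r-d/t>0$ (needed to apply \eqref{pointmaximal} and Lemma~\ref{maximal1}) and $t<q$ (needed so that the $\ell^q$ summation in Lemma~\ref{maximal1} is admissible), and then to check that every implicit constant appearing in the chain is independent of $\mu$, which it is since this independence is built into both \eqref{pointmaximal} and Lemma~\ref{maximal1}.
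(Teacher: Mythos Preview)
Your proposal is correct and matches the paper's approach exactly: the paper states Lemma~\ref{maximal2} as ``an immediate consequence of the pointwise estimate~\eqref{pointmaximal} and Lemma~\ref{maximal1},'' which is precisely what you do by choosing $t$ strictly between $r$ and $q$, setting $\epsilon=d/r-d/t>0$, and then feeding the resulting pointwise bound into Lemma~\ref{maximal1}.
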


Note that Lemma \ref{maximal2} is sharp in the sense that if $r\geq q$ then there exists a sequence $\{f_k\}$ in $\mathcal{E}(2^k)$ for which the inequality does not hold.

As an application of Lemma \ref{maximal2}, for $\mu\in\mathbb{Z}$, $q_1<q_2<\infty$, and $\mathbf{f}:=\{f_k\}_{k\in\mathbb{Z}}$ we have
\begin{equation}\label{embinf}
\mathcal{V}_{\mu,q_2}[\mathbf{f}]\lesssim \mathcal{V}_{\mu,q_1}[\mathbf{f}],
\end{equation} provided that each $f_k$ is defined as in Lemma \ref{maximal2}, where
\begin{equation*}
\mathcal{V}_{\mu,q}[\mathbf{f}]:= \sup_{P\in\mathcal{D},l(P)\leq 2^{-\mu}}{\Big(  \frac{1}{|P|}\int_P{  \sum_{k=-\log_2{l(P)}}^{\infty}{   |f_k(x)|^q   }    }dx  \Big)^{1/q}}. \end{equation*}
Indeed,  for fixed $Q\in\mathcal{D}_{k}$ and   $\sigma>0$ 
\begin{equation}\label{qwqw}
\Vert \mathfrak{M}_{\sigma,2^k}f_k\Vert_{L^{\infty}(Q)}\lesssim_{\sigma} \inf_{y\in Q}{\mathfrak{M}_{\sigma,2^k}f_k(y)}
\end{equation} and then for $k\geq \mu$, $P\in\mathcal{D}_{\mu}$, and $\sigma>d/q_1$
\begin{align*}
 \Vert f_k\Vert_{L^{\infty}(P)}&\leq \sup_{Q\in\mathcal{D}_k, Q\subset P}{\big\Vert \mathfrak{M}_{\sigma,2^k}f_k\big\Vert_{L^{\infty}(Q)}}\\ 
 &\lesssim_{\sigma} \sup_{Q\in\mathcal{D}_k, Q\subset P}{\Big( \frac{1}{|Q|}\int_Q{\big( \mathfrak{M}_{\sigma,2^k}f_k(y)\big)^{q_1}}dy\Big)^{1/q_1}}    \lesssim \mathcal{V}_{\mu,q_1}[\mathbf{f}]
\end{align*}  where we used  Lemma \ref{maximal2} in the last inequality.
By applying $|f_k(x)|^{q_2}\lesssim \big(\mathcal{V}_{\mu,q_1}[\mathbf{f}]\big)^{q_2-q_1}|f_k(x)|^{q_1}$ for $x\in P$, one can prove (\ref{embinf}).
Furthermore, by using (\ref{qwqw}) and Lemma \ref{maximal2}  for $0<q<\infty$ and $\mu\in\mathbb{Z}$
\begin{equation*}
\sup_{k\geq \mu}\Vert f_k\Vert_{L^{\infty}}\lesssim \mathcal{V}_{\mu,q}[\mathbf{f}].
\end{equation*}
 Then together with (\ref{embinf}), this  implies 
 \begin{equation}\label{bmoembedding}
 F_{\infty}^{s,q_1}\hookrightarrow F_{\infty}^{s,q_2}, \quad 0<q_1<q_2\leq\infty.\\ \nonumber
 \end{equation}

\subsection{Key Lemmas}

One of the main tools in the proof of Theorem A is the following well-known lemma. 

\begin{customlemma}{B}\label{originalmarshall}
Let $0<p< \infty$, $0<q\leq\infty$ and $s\in\mathbb{R}$. Suppose that  $\{f_k\}_{k\in\mathbb{N}_0}$ is a family of tempered distributions. Let $A>1$.
\begin{enumerate}
\item If  $Supp(\widehat{f_0})\subset \{\xi:|\xi|\leq A\}$ and  $Supp(\widehat{f_k})\subset \{\xi:2^k/A\leq |\xi|\leq A2^k\}$ for $k\geq 1$, then
\begin{equation*}
\Big\Vert \sum_{k=0}^{\infty}{f_k} \Big\Vert_{{F}_{p}^{s,q}}\lesssim_A \big\Vert \{2^{sk}f_k\}_{k=0}^{\infty}\big\Vert_{L^p(l^q)}.
\end{equation*}

\item If  $Supp(\widehat{f_k})\subset \{\xi: |\xi|\leq A2^k\}$ for $k\in\mathbb{N}_0$ and $s>\tau_{p,q}$, then
\begin{equation*}
\Big\Vert \sum_{k=0}^{\infty}{f_k} \Big\Vert_{{F}_{p}^{s,q}}\lesssim_A \big\Vert \{2^{sk}f_k\}_{k=0}^{\infty}\big\Vert_{L^p(l^q)}.
\end{equation*}

\end{enumerate}

\end{customlemma}
The first assertion follows immediately from the Nikol'skii representation (see \cite[2.5.2]{Tr} for details). The second one was proved in \cite{Ya}. See also  \cite{Ma3}, \cite{Ma1}, and \cite{Me}.

The following two lemmas are the counterpart of Lemma \ref{originalmarshall} for $F_{\infty}^{s,q}$.

\begin{lemma}\label{marshall1}
Let $0<q< \infty$, ${s} \in \mathbb{R}$, $A>1$. Let $\{f_k\}_{k\in\mathbb{N}_0}$  be a sequence of tempered distributions satisfying $Supp(\widehat{f_0})\subset \{\xi: |\xi|\leq A\}$, $Supp(\widehat{f_k})\subset \{\xi:2^{k}/A\leq |\xi|\leq 2^{k}A\}$ for $k\geq 1$.  Fix $\mu\in\mathbb{N}$ and $P\in\mathcal{D}_{\mu}$. Then there exists $h\in\mathbb{N}$ such that
\begin{equation*}
\Big( \frac{1}{|P|}\int_{P}{\sum_{n=\mu}^{\infty}{2^{snq}\Big|\Pi_n\Big( \sum_{k=0}^{\infty}f_k\Big)(x) \Big|^q}}dx\Big)^{1/q} \lesssim \sup_{R\in\mathcal{D}_{\mu}}{\Big(\frac{1}{|R|}\int_R{\sum_{k=\max{(0,\mu-h)}}^{\infty}{2^{skq}|f_k(x)|^q}}dx\Big)^{1/q}}
\end{equation*} 
where the implicit constant in the inequality is independent of $\mu$ and $P$.
\end{lemma}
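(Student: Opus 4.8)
The plan is to reduce the estimate for $\Pi_n\big(\sum_k f_k\big)$ to a finite, controlled sum by exploiting the Fourier support conditions on $\phi$ and on the $f_k$. Since $\widehat{\phi_n}$ is supported in $\{2^{n-1}\le|\xi|\le 2^{n+1}\}$ and $\widehat{f_k}$ is supported in $\{2^k/A\le|\xi|\le 2^kA\}$ for $k\ge 1$ (and $\{|\xi|\le A\}$ for $k=0$), there is a constant $h=h(A)\in\mathbb{N}$ so that $\Pi_n f_k\equiv 0$ unless $|n-k|\le h$. Hence $\Pi_n\big(\sum_k f_k\big)=\sum_{|k-n|\le h,\, k\ge 0}\Pi_n f_k$, a sum of at most $2h+1$ terms. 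The first step is to make this precise and note that consequently
\begin{equation*}
\Big|\Pi_n\Big(\sum_{k=0}^\infty f_k\Big)(x)\Big|^q\lesssim_{h,q}\sum_{\substack{|k-n|\le h\\ k\ge 0}}\big|\Pi_n f_k(x)\big|^q.
\end{equation*}

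Next I would bound $|\Pi_n f_k(x)|$ pointwise by a Peetre maximal function of $f_k$. Since $\Pi_n f_k=\phi_n\ast f_k$ and $\phi_n$ has integrable, rapidly decaying kernel with $\int|\phi_n(y)|(1+2^n|y|)^\sigma\,dy\lesssim_\sigma 1$, for any $\sigma>0$ one has $|\Pi_n f_k(x)|\lesssim_\sigma \mathfrak{M}_{\sigma,2^n}f_k(x)$; and because $|k-n|\le h$, up to an $A,h$-dependent constant we may replace $2^n$ by $2^k$ in the maximal operator, i.e. $|\Pi_n f_k(x)|\lesssim \mathfrak{M}_{\sigma,2^k}f_k(x)$. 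Since $f_k\in\mathcal{E}(A2^k)$ (its Fourier transform is supported in a ball of radius $\lesssim 2^k$), this is exactly the setting of Lemma \ref{maximal2}. Substituting these bounds and reindexing the double sum over $n\ge\mu$ and $|k-n|\le h$ as a sum over $k\ge\max(0,\mu-h)$ with a bounded number of $n$'s attached to each $k$, I get
\begin{equation*}
\frac{1}{|P|}\int_P\sum_{n=\mu}^\infty 2^{snq}\Big|\Pi_n\Big(\sum_{k}f_k\Big)(x)\Big|^q\,dx\lesssim \frac{1}{|P|}\int_P\sum_{k=\max(0,\mu-h)}^\infty 2^{skq}\big(\mathfrak{M}_{\sigma,2^k}f_k(x)\big)^q\,dx,
\end{equation*}
where I choose $\sigma=d/r$ with $0<r<q$ so that Lemma \ref{maximal2} applies. (One must absorb the factor $2^{sh q}$ coming from changing $2^{sn}$ to $2^{sk}$ — harmless since $h$ is fixed.)

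Then I would apply Lemma \ref{maximal2} to the sequence $\{g_k\}$ with $g_k=2^{sk}f_k$ (each $g_k\in\mathcal{E}(A2^k)$, and $\mathfrak{M}_{\sigma,2^k}(2^{sk}f_k)=2^{sk}\mathfrak{M}_{\sigma,2^k}f_k$), taking the supremum over $P\in\mathcal{D}_\mu$ on the left. A minor point: the lower index of summation in the target is $\max(0,\mu-h)$ rather than $\mu$, so before invoking Lemma \ref{maximal2} I should either extend the sequence by relabeling or split off the finitely many extra terms $k=\max(0,\mu-h),\dots,\mu-1$; those can be handled directly since on a cube $R\in\mathcal D_\mu$ one controls $\|f_k\|_{L^\infty(R)}$ by a local average of $\mathfrak{M}_{\sigma,2^k}f_k$ via \eqref{qwqw}, landing inside the right-hand side. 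The implicit constants depend only on $A$, $q$, $r$, $d$, $s$ (through $h$) but not on $\mu$ or $P$, which is exactly what is claimed.

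The main obstacle I anticipate is the bookkeeping around the shifted summation range and making sure the constant is genuinely independent of $\mu$: one has to be careful that the finitely many ``boundary'' terms $k<\mu$ (which appear because $\Pi_n$ for $n$ near $\mu$ sees $f_k$ with $k$ slightly below $\mu$) are absorbed into the right-hand side uniformly, and that Lemma \ref{maximal2} is applied with a fixed dilation constant $A$ (the enlargement from the $h$-shift) so its implicit constant does not blow up. The analytic content is entirely contained in Lemma \ref{maximal2} and the standard convolution-to-Peetre-maximal estimate; the rest is the finite-band decomposition forced by the nearly-disjoint Fourier supports.
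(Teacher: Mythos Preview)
Your proposal is correct and follows essentially the approach the paper intends: the paper does not prove Lemma \ref{marshall1} separately but states it ``can be proved in a similar and simpler way'' to Lemma \ref{marshall2}, and your argument is precisely that simplification---the annular Fourier supports reduce $\Pi_n(\sum_k f_k)$ to a sum over $|k-n|\le h$, then the convolution-to-Peetre bound and Lemma \ref{maximal2} finish. The cleanest way to handle the index shift you flag is to enlarge $P\in\mathcal{D}_\mu$ to its ancestor $P'\in\mathcal{D}_{\mu'}$ with $\mu'=\max(0,\mu-h)$ (paying a harmless factor $2^{hd}$), apply Lemma \ref{maximal2} at level $\mu'$, and then note that the average of a nonnegative function over any $R'\in\mathcal{D}_{\mu'}$ is dominated by the maximum of its averages over the dyadic subcubes in $\mathcal{D}_\mu$; this is exactly what the paper's ``$\lesssim_h$'' at the end of the proof of Lemma \ref{marshall2} encodes.
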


\begin{lemma}\label{marshall2}

Let $0<q< \infty$, $s>{\tau}_q$, and $A>1$. Let $\{f_k\}_{k\in\mathbb{N}_0}$  be a sequence of tempered distributions satisfying $Supp(\widehat{f_k})\subset \{\xi: |\xi|\leq A2^k\}$ for $k\in\mathbb{N}_0$.  Fix $\mu\in\mathbb{N}$ and $P\in\mathcal{D}_{\mu}$.
Then there exists $h\in\mathbb{N}$ such that
\begin{equation*}
\Big( \frac{1}{|P|}\int_{P}{\sum_{n=\mu}^{\infty}{2^{snq}\Big|\Pi_n\Big( \sum_{k=0}^{\infty}f_k\Big)(x) \Big|^q}}dx\Big)^{1/q} \lesssim \sup_{R\in\mathcal{D}_{\mu}}{\Big(\frac{1}{|R|}\int_R{\sum_{k=\max{(0,\mu-h)}}^{\infty}{2^{skq}|f_k(x)|^q}}dx\Big)^{1/q}}
\end{equation*}
where the implicit constant in the inequality is independent of $\mu$ and $P$.
\end{lemma}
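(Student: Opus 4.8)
The plan is to mimic the proof of Lemma \ref{originalmarshall}(2) on each dyadic cube and to feed the resulting Peetre‑maximal quantities into Lemma \ref{maximal1}/\ref{maximal2}. Set $F:=\sum_{k\ge 0}f_k$ (one may argue with partial sums, all constants below being independent of the truncation). Since $\widehat{\phi_n}$ is supported in $\{2^{n-1}\le|\xi|\le2^{n+1}\}$ and $\widehat{f_k}$ in $\{|\xi|\le A2^k\}$, there is $h_0=h_0(A)\in\mathbb N$ with $\Pi_nf_k=\phi_n\ast f_k\equiv0$ when $k<n-h_0$; hence for $n\ge\mu$ only indices $k\ge\max(0,\mu-h_0)$ occur, which is what fixes $h$ in the statement (one may take $h=h_0$). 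The first step is to split
\begin{equation*}
\Pi_nF=\underbrace{\sum_{\max(0,n-h_0)\le k\le n}\Pi_nf_k}_{=:I_n}\;+\;\underbrace{\sum_{k>n}\Pi_nf_k}_{=:II_n},\qquad n\ge\mu .
\end{equation*}

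For the near‑diagonal term $I_n$ I would note that the sum has at most $h_0+1$ summands with $|n-k|\le h_0$. Fixing $r<q$ with $r\le1$ and $\sigma:=d/r$, one has $|\Pi_nf_k(x)|\lesssim\mathfrak M_{\sigma,2^n}f_k(x)\approx\mathfrak M_{\sigma,2^k}f_k(x)$ (the scales differing by at most $2^{h_0}$), so after rearranging
\begin{equation*}
\sum_{n\ge\mu}2^{snq}|I_n(x)|^q\lesssim\sum_{k\ge\max(0,\mu-h_0)}\big(\mathfrak M_{\sigma,2^k}(2^{sk}f_k)(x)\big)^q .
\end{equation*}
The terms $k\ge\mu$ are handled by Lemma \ref{maximal2}; the at most $h_0$ terms with $k<\mu$ are treated directly, using that by (\ref{qwqw}) the function $\mathfrak M_{\sigma,2^k}(2^{sk}f_k)$ is, up to a constant, constant on the cube $Q\in\mathcal D_k$ containing a given $P\in\mathcal D_\mu$, so its $L^\infty(P)$‑norm is dominated by the $L^q$‑average over any $R\in\mathcal D_\mu$ with $R\subset Q$, which is bounded by the right‑hand side of the asserted inequality. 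This part needs no restriction on $s$ and is exactly as in Lemma \ref{marshall1}.

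The off‑diagonal term $II_n$ is the crux, and where $s>\tau_q$ is used. For $k>n$ and small $\epsilon>0$ I would establish the pointwise bound
\begin{equation*}
|\Pi_nf_k(x)|\lesssim 2^{(k-n)(d/r-d+\epsilon)}\,\mathcal M_r^{k,\epsilon}f_k(x)\qquad(0<r\le1),
\end{equation*}
by decomposing the integral defining $\phi_n\ast f_k$ into the dyadic shells $|y|\sim2^{-n+j}$, $j\ge0$: on each shell the $L^1$‑average of $|f_k|$ over a ball of radius $2^{-n+j}\,(\ge2^{-k})$ is $\lesssim(2^{k-n+j})^{d/r-d}$ times an $L^r$‑average (the oscillation of the band‑limited $f_k$ at scale $2^{-k}$ produces the exponent $d/r-d=d(1/r-1)$, not $d/r$, via a Plancherel–Polya/Nikol'skii estimate together with (\ref{pointmaximal})), and that $L^r$‑average is $\lesssim(2^{k-n+j})^{\epsilon}\mathcal M_r^{k,\epsilon}f_k(x)$; summing the geometric series in $j$ gives the claim. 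Since $s>\tau_q=d/\min(1,q)-d$, one may choose $r<q$ with $r\le1$ and $\epsilon>0$ so that $\delta:=s-(d/r-d+\epsilon)>0$, whence $2^{sn}|\Pi_nf_k(x)|\lesssim2^{-\delta(k-n)}\,2^{sk}\mathcal M_r^{k,\epsilon}f_k(x)$ for $k>n$. Summing over $k$ — by Hölder if $q\ge1$, by $\ell^q$‑subadditivity if $q<1$ — yields $\sum_{n\ge\mu}2^{snq}|II_n(x)|^q\lesssim\sum_{k\ge\mu}\big(2^{sk}\mathcal M_r^{k,\epsilon}f_k(x)\big)^q$, and Lemma \ref{maximal1} (applied to $2^{sk}f_k$, with this $r<q$ and $\epsilon>0$) bounds the $P$‑average of the right‑hand side by the desired supremum. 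Combining with the estimate for $I_n$ completes the proof; all constants are $\mu$‑ and $P$‑independent because this is already built into Lemmas \ref{maximal1} and \ref{maximal2}.

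The step I expect to be the main obstacle is precisely the off‑diagonal pointwise bound for $q<1$: getting the sharp exponent $d/r-d$ (rather than $d/r$) when $k\gg n$, which is the mechanism behind the threshold $\tau_q=d/q-d$ in Lemma \ref{originalmarshall}(2) and which forces the argument through the variant operators $\mathcal M_r^{k,\epsilon}$ and Lemma \ref{maximal1} instead of the plain Hardy–Littlewood maximal function. The remaining ingredients — convergence of $\sum_kf_k$, the Fourier‑support bookkeeping, and the low‑frequency ($\Pi_0$) contribution — are routine and handled as in Lemma \ref{marshall1}.
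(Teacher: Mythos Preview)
Your approach is essentially the paper's: the heart of both arguments is the off-diagonal pointwise bound of the form $|\Pi_nf_k(x)|\lesssim 2^{(k-n)(d/r-d)+\epsilon(k-n)}\,\mathcal{M}^{k,\epsilon}f_k(x)$ for $k\ge n-h$, followed by Lemma~\ref{maximal1}. The paper obtains this bound by writing $|f_k|=|f_k|^{1-r}|f_k|^r$, pulling the first factor out as a Peetre maximal function, and dyadically decomposing the remaining integral (landing on $\mathcal{M}_t^{k,\epsilon}$ with an auxiliary $t>r$ determined by $\epsilon=d/r-d/t$); your shell decomposition together with the local Nikol'skii step is an equivalent route that lands on $\mathcal{M}_r^{k,\epsilon}$ directly. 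Both are correct and yield the same exponent $d/r-d$.

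One organizational difference worth noting: the paper does \emph{not} split into near-diagonal $I_n$ and off-diagonal $II_n$. It proves the pointwise bound uniformly for all $k\ge n-h$ and then applies Lemma~\ref{maximal1} once at the shifted level $\mu-h$, passing back to $\mathcal{D}_\mu$ via the elementary averaging inequality $\sup_{R^*\in\mathcal{D}_{\mu-h}}\tfrac{1}{|R^*|}\int_{R^*}\le\sup_{R\in\mathcal{D}_\mu}\tfrac{1}{|R|}\int_R$. Your split introduces a small wrinkle: in your handling of the finitely many near-diagonal terms with $\mu-h_0\le k<\mu$, the assertion that ``the $L^q$-average of $\mathfrak{M}_{\sigma,2^k}(2^{sk}f_k)$ over $R\in\mathcal{D}_\mu$ is bounded by the right-hand side'' does not follow from (\ref{qwqw}) alone---(\ref{qwqw}) only compares the $L^\infty$ and the infimum of the \emph{same} function $\mathfrak{M}_{\sigma,2^k}f_k$, and you still need to pass from $\mathfrak{M}_{\sigma,2^k}f_k$ back to $|f_k|$ on the right. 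This requires one more use of Lemma~\ref{maximal2} (at level $k$ or $\mu-h$), after which the averaging inequality above finishes the job. The fix is easy, and the cleanest version is simply to drop the $I_n/II_n$ split and argue uniformly as the paper does.
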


\begin{remark}\label{remark3.5}
It has been observed in \cite[Lemma 13]{Ma2} that weaker versions hold, namely that the right hand side of the inequalities in Lemma \ref{marshall1} and \ref{marshall2} are supremums over arbitrary dyadic cubes, not over $R\in\mathcal{D}_{\mu}$. Thus, the above lemmas improve the ones in \cite[Lemma 13]{Ma2}.
Moreover, the proof of such earlier versions are based on the vector-valued Fourier multiplier theorem.
We will provide a different and elementary proof by just using Lemma \ref{maximal1} and \ref{maximal2}.
\end{remark}

We prove only Lemma \ref{marshall2} because  Lemma \ref{marshall1} can be proved in a similar and simpler way.
Our proof is based on the idea in \cite{Ya}.

\begin{proof}[Proof of Lemma \ref{marshall2}]
We fix $\mu\in\mathbb{N}$ and  $P\in\mathcal{D}_{\mu}$, and  regard $f_k\equiv 0$ when $k<0$ in the proof.
Let $h$ be the smallest integer  greater or equal to $ \log_2A$.
The supports of $\widehat{\phi_n}$ and $\widehat{f_k}$ ensure that the left hand side of the desired inequality is less than
\begin{equation}\label{adad}
{\Big( \frac{1}{|P|}\int_P{ \sum_{n=\mu}^{\infty}{2^{snq}\Big(   \sum_{k=n-h}^{\infty}{|\Pi_n f_k(x)|}     \Big)^q}    }dx   \Big)^{1/q}}.
\end{equation} 
We choose $0<r<\min{(1,q)}$ such that $s>d/r-d>\tau_q$, and then pick $0<\epsilon<d/r$ so that $s>d/r-d+\epsilon r$. Let $\sigma>d/r$.
For each $k\geq n-h$ we see that
\begin{align*}
\big|\Pi_nf_k(x)\big|&\leq\int_{\mathbb{R}^d}{\big( 1+2^n|y| \big)^{{\sigma}}|\phi_n(y)|\frac{|f_k(x-y)|}{\big( 1+2^n|y| \big)^{{\sigma}}}}dy\\
  &\leq\big\Vert  (1+2^n|\cdot|)^{{\sigma}}\phi_n  \big\Vert_{L^{\infty}}\Big( \sup_{y}{\frac{|f_k(x-y)|}{(1+2^n|y|)^{{\sigma}}}} \Big)^{1-r}\int_{\mathbb{R}^d}{\frac{|f_k(x-y)|^r}{(1+2^n|y|)^{{\sigma} r}}}dy\\
  &\lesssim 2^{nd}\Big( \sup_{y}{\frac{|f_k(x-y)|}{(1+2^n|y|)^{{\sigma}}}} \Big)^{1-r}\int_{\mathbb{R}^d}{\frac{|f_k(x-y)|^r}{(1+2^n|y|)^{{\sigma} r}}}dy.
\end{align*}
Let $t$ satisfy $\epsilon=d/r-d/t>0$. Then by (\ref{pointmaximal})
\begin{equation*}
 \sup_{y}{\frac{|f_k(x-y)|}{(1+2^n|y|)^{{\sigma}}}} \lesssim   2^{(k-n)d/r} \mathfrak{M}_{d/r,2^k}f_k(x)\lesssim 2^{(k-n)d/r}  \mathcal{M}_{t}^{k,\epsilon}f_k(x).
\end{equation*}
Moreover, for $j,n\in\mathbb{Z}$ let $E_j^n(x):=\{y: 2^{j-1}<2^n|x-y|\leq 2^j\}$. Then
\begin{align*}
&\int_{\mathbb{R}^d}{\frac{|f_k(x-y)|^r}{(1+2^n|y|)^{{\sigma} r}}}dy=\int_{\mathbb{R}^d}{\frac{|f_k(y)|^r}{(1+2^n|x-y|)^{{\sigma} r}}}dy\\
&\lesssim\sum_{j=-\infty}^{n-k}{\frac{1}{(1+2^j)^{{\sigma} r}} \int_{E_j^n(x)}|f_k(y)|^rdy    }+\sum_{j=n-k+1}^{\infty}{\frac{1}{(1+2^j)^{{\sigma} r}} \int_{E_j^n(x)}|f_k(y)|^rdy    }\\
&=\sum_{j=-\infty}^{n-k}{\frac{2^{(j-n)d}}{(1+2^j)^{{\sigma} r}} \frac{1}{2^{(j-n)d}}\int_{E_j^n(x)}|f_k(y)|^rdy    }+\sum_{j=n-k+1}^{\infty}{\frac{2^{(j-n)(d+\epsilon r)}}{(1+2^j)^{{\sigma} r}} \frac{1}{2^{(j-n)(d+\epsilon r)}}\int_{E_j^n(x)}|f_k(y)|^rdy    }\\
&\lesssim 2^{-nd}\Big( \sup_{\substack{V\ni x,\\ l(V)\leq 2^{-k}}}{\frac{1}{|V|}\int_V{|f_k(y)|^r}dy}+2^{-n\epsilon r}\sup_{\substack{V\ni x,\\ l(V)>2^{-k}}}{\frac{1}{|V|^{1+\epsilon r/d}}\int_V{|f_k(y)|^r}dy}  \Big)\\
&\lesssim 2^{-nd}\Big[ \Big(\sup_{\substack{V\ni x,\\ l(V)\leq 2^{-k}}}{\frac{1}{|V|}\int_V{|f_k(y)|^t}dy}\Big)^{r/t}+2^{-n\epsilon r}\Big(\sup_{\substack{V\ni x,\\ l(V)>2^{-k}}}{\frac{1}{|V|^{1+\epsilon t/d}}\int_V{|f_k(y)|^t}dy}  \Big)^{r/t}\Big]\\
&\lesssim 2^{-nd}2^{(k-n)\epsilon r}\big( \mathcal{M}_{t}^{k,\epsilon}f_k(x)\big)^{r}
\end{align*} for $k\geq n-h$, where the third inequality follows by H\"older's inequality.
By putting these together one obtains
\begin{equation}\label{plpl}
\big| \Pi_n f_k(x) \big|\lesssim 2^{(k-n)d(1/r-1)}2^{(k-n)\epsilon r}\mathcal{M}_t^{k,\epsilon}f_k(x).
\end{equation}
Now choose $\delta>0$ sufficiently small so that $0<\delta<s-(d/r-d)-r\epsilon$.
Then it follows, from H\"older's inequality if $q>1$ or embedding $l^q\hookrightarrow l^1$ if $q\leq 1$, that
\begin{equation*}
\sum_{n=\mu}^{\infty}{2^{snq}\Big( \sum_{k=n-h}^{\infty}{\big| \Pi_n f_k(x)  \big|}   \Big)^q}\lesssim \sum_{n=\mu}^{\infty}{2^{snq} 2^{-\delta nq}\sum_{k=n-h}^{\infty}{2^{\delta kq }\big| \Pi_n f_k(x)  \big|^q}   }
\end{equation*} and by (\ref{plpl}) and the choice of $\delta$, the last expression is bounded by a constant times
\begin{equation*}
\sum_{k=\mu-h}^{\infty}{2^{kq(d/r-d)}2^{\epsilon rkq}2^{\delta kq}\big(  \mathcal{M}_t^{k,\epsilon}f_k(x)  \big)^q \sum_{n=\mu}^{k+h}{2^{nq(s-(d/r-d)-\delta-r\epsilon)}} }\lesssim\sum_{k=\mu-h}^{\infty}{2^{skq}\big( \mathcal{M}_{t}^{k,\epsilon}f_k(x)\big)^q }.
\end{equation*}
Lemma \ref{maximal1} finally yields
\begin{equation*}
(\ref{adad})\lesssim_{h} \sup_{R\in\mathcal{D}_{\mu}}{\Big(\frac{1}{|R|}\int_R{\sum_{k=\max{(0,\mu-h)}}^{\infty}{2^{skq}|f_k(x)|^q}}dx  \Big)^{1/q}}.
\end{equation*}

\qedhere
\end{proof}

\section{Proof of Positive results}\label{positiveresult}

\subsection*{Paradifferential technique}

The idea of our proof is based on the paradifferential technique, used by Bony \cite{Bo}, Bourdaud \cite{Bou1}, \cite{Bou2}, Marschall \cite{Ma1}, Runst \cite{Ru}, and Johnsen \cite{Jo}, \cite{Jo1}.
For $j,k\geq 0$ let 
\begin{equation}\label{defpse}
a_{j,k}(x,\xi):= \phi_j\ast a(\cdot,\xi)(x)\widehat{\phi_k}(\xi) 
\end{equation}
where we use $\Phi$, instead of $\phi_0$ when $j=0$ or $k=0$.
Then $T_{[a]}f$ can be written as 
\begin{equation*}
T_{[a]}f=\mathfrak{S}_{[a]}^{near}f+\mathfrak{S}_{[a]}^{far}f
\end{equation*}
where 
\begin{equation}\label{kkll}
\mathfrak{S}_{[a]}^{near}f:=\sum_{k,j:|j-k|\leq 2}{T_{[a_{j,k}]}f}
\end{equation}
\begin{equation}\label{llkk}
\mathfrak{S}_{[a]}^{far}f:=\sum_{k,j:|j-k|\geq 3}{T_{[a_{j,k}]}f}.
\end{equation}

It is known in \cite[Proposition 5.6, Theorem 6.1]{Jo3} that $\mathfrak{S}_{[a]}^{far}$ satisfies H\"ormander's twisted diagonal condition (\ref{further}), is well defined on $S'$, and maps $S'$ into itself. Moreover, the adjoints also belong to $Op\mathcal{S}_{1,1}^{m}$.

\subsection{Boundedness of $\mathfrak{S}_{[a]}^{far}$}
Let $0<q<\infty$, $m\in\mathbb{R}$, and $s\in\mathbb{R}$.
We will prove that for a fixed $\mu\in\mathbb{N}$
\begin{align}\label{mainclaim}
&\sup_{P\in\mathcal{D}_{\mu}}\Big(\frac{1}{|P|}\int_P\sum_{k=\mu}^{\infty}{2^{skq} \big|\Pi_k\mathfrak{S}_{[a]}^{far}f(x) \big|^q}dx \Big)^{1/q}\nonumber\\
&\lesssim \sup_{0\leq k\leq \mu-1}{\Vert 2^{k(s+m)}\Pi_kf \Vert_{L^{\infty}}}+\sup_{R\in\mathcal{D}_{\mu}}\Big(\frac{1}{|R|}\int_R\sum_{k=\mu}^{\infty}{2^{(s+m)kq} \big|\Pi_kf(x) \big|^q} dx\Big)^{1/q}
\end{align} uniformly in $\mu$,
and then the inequality (\ref{mainclaim}) implies 
\begin{equation}\label{maincorollary}
\Vert \mathfrak{S}_{[a]}^{far}f\Vert_{F_{\infty}^{s,q}}\lesssim \Vert f\Vert_{F_{\infty}^{s+m,q}} \quad \text{for arbitrary }s\in\mathbb{R}.
\end{equation}
That is, we do not need the condition $s>{\tau}_q$ for $\mathfrak{S}_{[a]}^{far}$ part in Theorem \ref{mmain} and the independence of $s$ for this part will be also used in the proof of Theorem \ref{bmo} and \ref{s=0}.
It follows, from (\ref{maincorollary}) and the embedding $F_{\infty}^{0,1}\hookrightarrow bmo$ in (\ref{bmoembedding}), that for $s=0$
 \begin{equation*}
 \big\Vert \mathfrak{S}_{[a]}^{far}f\big\Vert_{bmo}\lesssim \Vert f\Vert_{F_{\infty}^{m,1}},
 \end{equation*} which is one part of the proof of Theorem \ref{bmo}.

We fix $\mu\in\mathbb{N}$ and $P\in\mathcal{D}_{\mu}$.
Write
\begin{equation*}
\mathfrak{S}_{[a]}^{far}f=\sum_{k=3}^{\infty}{\sum_{j=0}^{k-3}{T_{[a_{j,k}]}f}}+\sum_{j=3}^{\infty}{\sum_{k=0}^{j-3}{T_{[a_{j,k}]}f}}=:\sum_{k=3}^{\infty}{T_{[b_k]}{f}}+\sum_{j=3}^{\infty}{T_{[c_j]}{f}}
\end{equation*}
and observe that $b_k,c_j \in\mathcal{S}_{1,1}^{m}$ uniformly in $k$ and $j$. 
It is clear that
\begin{equation*}
Supp(\widehat{T_{[b_k]}f})\subset \{\xi:2^{k-2}\leq |\xi|\leq 2^{k+2}\},
\end{equation*} 
\begin{equation}\label{ccccj}
Supp(\widehat{T_{[c_j]}f})\subset \{\xi:2^{j-2}\leq |\xi|\leq 2^{j+2}\}
\end{equation} 
and then, from Lemma \ref{marshall1} with $h=2$, we see that
\begin{align}\label{ddd}
&\Big(\frac{1}{|P|}\int_P{\sum_{k=\mu}^{\infty}{2^{skq}\Big|\Pi_k\Big(\sum_{n=3}^{\infty}{T_{[b_n]}f} \Big)(x) \Big|^q}}dx \Big)^{1/q}\nonumber\\ 
&\lesssim \sup_{R\in\mathcal{D}_{\mu}}{\Big( \dfrac{1}{|R|}\int_R{\sum_{k=\max{(3,\mu-2)}}^{\infty}{2^{skq}\big|T_{[b_k]}f(x) \big|^q}}dx \Big)^{1/q}}.
\end{align} 

For $k\geq 3$ let ${\phi}_k^*:=\phi_{k-1}+\phi_k+\phi_{k+1}$ and define $\Pi_k^*f:=\phi_k^*\ast f$ so that $\Pi_k^*\Pi_k=\Pi_k$.
Then our claim is  that for ${\sigma}>0$ and $k\geq 3$
\begin{equation}\label{first}
\big| T_{[b_k]}f(x)\big| \lesssim_{{\sigma}} 2^{km}\mathfrak{M}_{{\sigma},2^k}\Pi^*_kf(x).
\end{equation}
Then (\ref{first}) proves that, by Lemma \ref{maximal2} with $\sigma>d/q$,  
\begin{align*}
(\ref{ddd})&\lesssim \sup_{R\in\mathcal{D}_{\mu}}\Big( \frac{1}{|R|}\int_R\sum_{k=\max{(3,\mu-2)}}^{\infty}{2^{(s+m)kq}\big|\Pi^*_kf(x)\big|^q}dx\Big)^{1/q} \\
&\lesssim \sup_{0\leq k\leq \mu-1}{\big\Vert 2^{k(s+m)}\Pi_kf\big\Vert_{L^{\infty}}} +\sup_{R\in\mathcal{D}_{\mu}}\Big(\frac{1}{|R|}\int_R\sum_{k=\mu}^{\infty}{2^{(s+m)kq} \big|\Pi_kf(x) \big|^q} dx\Big)^{1/q}
\end{align*} where we used a triangle inequality for $\Pi^*_k=\Pi_{k-1}+\Pi_k+\Pi_{k+1}$.

To see (\ref{first}) let $K_k$ be the kernel of $T_{[b_k]}$, which is defined by
\begin{equation}\label{kernelexpress}
K_k(x,y)=\int_{\mathbb{R}^d}{b_k(x,\xi)e^{2\pi i\langle x-y,\xi\rangle}}d\xi.
\end{equation}
Note that 
\begin{equation*}
T_{[b_k]}f(x)=\int_{\mathbb{R}^d}{K_k(x,y)\Pi_k^*f(y)}dy
\end{equation*} where $\Pi_k^*f\in\ C^{\infty}$  and $K_k$ does not have any singularities because the integral in (\ref{kernelexpress}) is, in fact, taken over an annulus $\{\xi:|\xi|\approx 2^k\}$ from the definition of $b_k$.
We first observe that 
\begin{equation}\label{444}
\big|K_k(x,y)\big|\leq \int_{|\xi|\approx 2^k}{|b_k(x,\xi)|}d\xi\lesssim 2^{k(m+d)}.
\end{equation}
 Moreover, by using integration by parts, for $|x-y|\geq 2^{-k}$ one has
\begin{equation}\label{555}
|K_k(x,y)|   \lesssim_M 2^{k(m+d)}\frac{1}{(2^k|x-y|)^M}
\end{equation} for all $M>0$. Combining (\ref{444}) and (\ref{555}) together we have a size estimate
\begin{equation*}
|K_k(x,y)|   \lesssim_M 2^{k(m+d)}\frac{1}{(1+2^k|x-y|)^M}.
\end{equation*}
We choose $M>{\sigma}+d$ and then 
\begin{equation*}
\big| T_{[b_k]}f(x)\big|\lesssim_M 2^{km}\mathfrak{M}_{\sigma,2^k}\Pi_k^*f(x)\int_{\mathbb{R}^d}{\frac{2^{kd}}{\big(1+2^k|x-y|\big)^{M-\sigma}}}dy\lesssim 2^{km}\mathfrak{M}_{\sigma,2^k}\Pi_k^*f(x),
\end{equation*} which proves (\ref{first}).

Similarly, by Lemma \ref{marshall1} with (\ref{ccccj}) one has
\begin{align}\label{eee}
&\Big(\frac{1}{|P|}\int_P{\sum_{k=\mu}^{\infty}{2^{skq}\Big|\Pi_k\Big(\sum_{j=3}^{\infty}{T_{[c_j]}f} \Big)(x) \Big|^q}}dx \Big)^{1/q}\nonumber\\ 
&\lesssim  \sup_{R\in\mathcal{D}_{\mu}}{\Big( \dfrac{1}{|R|}\int_R{\sum_{j=\max{(3,\mu-2)}}^{\infty}{2^{sjq}\big|T_{[c_j]}f(x) \big|^q}}dx \Big)^{1/q}}.
\end{align} 
For the estimation of this part we claim that for each $j\geq 3$ and all $N,\sigma>0$
\begin{equation}\label{second}
|T_{[a_{j,k}]}f(x)| \lesssim_{N,\sigma} 2^{km}2^{-(j-k)N}\mathfrak{M}_{{\sigma},2^k}\Pi^*_kf(x)
\end{equation}  and then  (\ref{eee}) is less than a constant times
\begin{equation*}
\sup_{R\in\mathcal{D}_{\mu}}{\Big(\frac{1}{|R|}\int_R{\sum_{j=\max{(3,\mu-2)}}^{\infty}{2^{sjq}\Big(    \sum_{k=0}^{j-3}{ 2^{km}2^{-(j-k)N}\mathfrak{M}_{{\sigma},2^k}\Pi^*_kf(x)   }\Big)^q}}dx\Big)^{1/q}}.
\end{equation*} 
We choose $\sigma>d/q$, $N>s$ and  $\epsilon>0$ satisfying $s+\epsilon<N$.
Using H\"older's inequality if $q>1$ or embedding $l^q\hookrightarrow l^1$ if $q\leq 1$ this expression is bounded by a constant times
\begin{equation*}
\sup_{R\in\mathcal{D}_{\mu}}\Big(\frac{1}{|R|}\int_R{\sum_{j=\max{(3,\mu-2)}}^{\infty}{2^{{(s+\epsilon)}jq}    \sum_{k=0}^{j-3}{ 2^{kq(m-\epsilon)}2^{-(j-k)Nq}\big(\mathfrak{M}_{{\sigma},2^k}\Pi^*_kf(x) \big)^q  }}}dx\Big)^{1/q}
\end{equation*}
and we exchange the sums over $j$ and over $k$, and split it by using
\begin{equation*}
\sum_{k=\max{(0,\mu-5)}}^{\infty}\sum_{j=k+3}^{\infty}+\sum_{k=0}^{\mu-4}\sum_{j=\mu-2}^{\infty}
\end{equation*} where  we consider only the first part if $\mu\leq 3$.
Then the condition $N-\epsilon-s>0$ and Lemma \ref{maximal2} (with $\sigma>d/q$) prove that the term corresponding to the first sum is controlled by a constant times
\begin{align*}
& \sup_{R\in\mathcal{D}_{\mu}}\Big(\frac{1}{|R|}\int_R{\sum_{k=\max{(0,\mu-5)}}^{\infty}{2^{(m+s)kq}\big| \Pi^*_kf(x)\big|^q}}dx \Big)^{1/q}\\
&\lesssim\sup_{0\leq k\leq \mu-1}{\big\Vert 2^{k(s+m)}\Pi_kf\big\Vert_{L^{\infty}}} +\sup_{R\in\mathcal{D}_{\mu}}\Big(\frac{1}{|R|}\int_R\sum_{k=\mu}^{\infty}{2^{(s+m)kq} \big|\Pi_kf(x) \big|^q} \Big)^{1/q}
\end{align*} uniformly in $\mu$ (where we applied $\Pi_k^*=\Pi_{k-1}+\Pi_k+\Pi_{k+1}$).
Moreover, if $\mu\geq 4$ then the other part is bounded by a constant times
\begin{align*}
&\sup_{R\in\mathcal{D}_{\mu}}{\Big( \frac{1}{|R|}\int_R{\sum_{k=0}^{\mu-4}2^{(k-\mu)(N-\epsilon-s)q}2^{k(m+s)q}\big( \mathfrak{M}_{\sigma,2^k}\Pi^*_kf(x)\big)^q}dx\Big)^{1/q}}\\
&\lesssim \sup_{0\leq k\leq \mu-3}\big\Vert 2^{k(m+s)}\Pi_kf\big\Vert_{L^{\infty}}\leq \sup_{0\leq k\leq \mu-1}\big\Vert 2^{k(m+s)}\Pi_kf\big\Vert_{L^{\infty}}
\end{align*} uniformly in $\mu$, because $N-\epsilon-s>0$.

To see the pointwise estimate (\ref{second}) we also use a size estimate of the kernel of $T_{[a_{j,k}]}$.
Let $W_{j,k}(x,y)$ be the kernels of $T_{[a_{j,k}]}$. Similar to $K_k$ in (\ref{kernelexpress}), $W_{j,k}$ does not have any singularities and it acts on $C^{\infty}$-function $\Pi_k^*f$.
Due to moment conditions $\int_{\mathbb{R}^d}{x^{\alpha}\phi_j(x)}dx=0$ for $j\geq 1$ and any multi-indicies $\alpha$, one has, for any $N\geq 1$,
\begin{align*}
W_{j,k}(x,y)&=\int_{\mathbb{R}^d}{\Big(\int_{\mathbb{R}^d}{\phi_j(z)a(x-z,\xi)}dz\Big) \widehat{\phi_k}(\xi)e^{2\pi i\langle x-y,\xi\rangle}}d\xi\\
&=\int_{\mathbb{R}^d}{\Big[\int_{\mathbb{R}^d}{\phi_j(z)\Big(a(x-z,\xi)-\sum_{|\alpha|\leq N-1}{\frac{1}{\alpha !}\partial_x^{\alpha}a(x,\xi)(-z)^{\alpha}}\Big)}dz\Big] \widehat{\phi_k}(\xi)e^{2\pi i\langle x-y,\xi\rangle}}d\xi
\end{align*}
and then use Taylor's theorem to obtain
\begin{equation*}
W_{j,k}(x,y)=\sum_{|\alpha|=N}{\frac{1}{\alpha !}\int_0^1{(1-t)^{N-1}\Big(\int_{\mathbb{R}^d}{\int_{\mathbb{R}^d}{\phi_j(z)(-z)^{\alpha}\partial_x^{\alpha}a(x-tz,\xi)\widehat{\phi_k}(\xi)e^{2\pi i\langle x-y,\xi\rangle}}dz}d\xi \Big)}dt}.
\end{equation*}
Then for $j\geq 3$, one has
\begin{equation*}
|W_{j,k}(x,y)|\lesssim_N \int_{\mathbb{R}^d}{\int_{\mathbb{R}^d}{|\phi_j(z)||z|^N\big(1+|\xi|\big)^{m+N}|\widehat{\phi_k}(\xi)|}dz}d\xi\lesssim 2^{k(m+d)}2^{-(j-k)N}.
\end{equation*}
Furthermore, for $|x-y|\geq 2^{-k}$ it follows that 
\begin{equation*}
|W_{j,k}(x,y)|\lesssim_{N,M} 2^{k(m+d)}2^{-(j-k)N}\frac{1}{(2^k|x-y|)^M}
\end{equation*}
by integration by parts in $\xi$.
 These yield
\begin{equation*}
|W_{j,k}(x,y)|\lesssim 2^{k(m+d)}2^{-(j-k)N}\frac{1}{(1+2^k|x-y|)^M}
\end{equation*}
and by using the idea in the proof of (\ref{first}) one obtains (\ref{second}). 
This completes the proof of (\ref{mainclaim}).

\subsection{Boundedness of $\mathfrak{S}_{[a]}^{near}$}

For $k\geq 0$ let 
\begin{equation*}
 d_k(x,\xi):=\sum_{j=k-2}^{k+2}{a_{j,k}(x,\xi)}
 \end{equation*} assuming $a_{j,k}(x,\xi)=0$ for $j<0$.
Then $d_k\in \mathcal{S}_{1,1}^{m}$ uniformly in $k$, and 
\begin{equation*}
\mathfrak{S}_{[a]}^{near}f=\sum_{k=0}^{\infty}{T_{[d_k]}f}.
\end{equation*}
By using the argument in the proof of (\ref{first}) we establish the pointwise estimate
\begin{equation}\label{thirdkernelest}
\big| T_{[d_k]}f(x) \big|\lesssim_{\sigma} 2^{km}\mathfrak{M}_{{\sigma},2^k}\Pi^*_kf(x)
\end{equation} for ${\sigma}>d/q$.

\subsubsection{\textbf{Proof of Theorem \ref{mmain}}}\label{mmmn}
Assume $s>\tau_q$.
Since the support of  $Supp(\widehat{T_{[d_k]}f})\subset \{\xi:|\xi|\leq 2^{k+3}\}$, by using Lemma \ref{marshall2} with the assumption $s>\tau_q$, one has 
\begin{equation*}
\Big(\frac{1}{|P|}\int_P{\sum_{k=\mu}^{\infty}{2^{skq}\big|\Pi_k\mathfrak{S}_{[a]}^{near}f(x)\big|^q}}dx \Big)^{1/q} \lesssim \sup_{R\in\mathcal{D}_{\mu}}{\Big(  \frac{1}{|R|}\int_R{\sum_{k=\max{(0,\mu-3)}}^{\infty}{2^{skq}\big|T_{[d_k]}f(x)\big|^q}}dx \Big)^{1/q}}.
\end{equation*}
Then Lemma \ref{maximal2} and triangle inequality for $\Pi_k^*=\Pi_{k-1}+\Pi_k+\Pi_{k+1}$ yield the bound
\begin{align*}
&\sup_{R\in\mathcal{D}_{\mu}}{\Big( \frac{1}{|R|}\int_R{\sum_{k=\max{(0,\mu-4)}}^{\infty}{2^{(s+m)kq}\big|\Pi_kf(x) \big|^q}}dx \Big)^{1/q}}\\
&\lesssim \sup_{0\leq k\leq \mu-1}\big\Vert 2^{k(m+s)}\Pi_kf\big\Vert_{L^{\infty}}+\sup_{R\in\mathcal{D}_{\mu}}\Big(\frac{1}{|R|}\int_R{\sum_{k=\mu}^{\infty}{2^{(m+s)kq}\big| \Pi_kf(x)\big|^q}}dx \Big)^{1/q}
\end{align*} and this completes the proof.

\subsubsection{\textbf{Proof of Theorem \ref{bmo}}}\label{bmosection}
We shall prove
\begin{equation*}
\Vert \mathfrak{S}_{[a]}^{near}f \Vert_{bmo}\lesssim \Vert f\Vert_{F_{\infty}^{m,1}}.
\end{equation*}
According to the definition of $bmo$ in \cite[2.2.2]{Tr}, $\Vert  \mathfrak{S}_{[a]}^{near}f \Vert_{bmo} $ is the sum of
\begin{equation}\label{l(Q)>1}
 \sup_{l(Q)\geq 1}{\frac{1}{|Q|}\int_Q{\big|\mathfrak{S}_{[a]}^{near}f(x)\big|}dx},
 \end{equation}
 \begin{equation}\label{l(Q)<1}
 \sup_{l(Q)< 1}{\frac{1}{|Q|}\int_Q{\big|\mathfrak{S}_{[a]}^{near}f(x)-\big( \mathfrak{S}_{[a]}^{near}f \big)_Q  \big|}dx}
\end{equation} where $(\mathfrak{S}_{[a]}^{near}f)_Q$ is the average of $ \mathfrak{S}_{[a]}^{near}f$ over $Q$ and $\sup_{l(Q)\geq 1}$ means the supremum over all cubes (not necessarily dyadic cubes) satisfying $l(Q)\geq 1$ (and similarly, $ \sup_{l(Q)<1}$ is defined).

By using (\ref{thirdkernelest}) one obtains that for $l(Q)\geq 1$ and $\sigma>d$
\begin{align*}
&\frac{1}{|Q|}\int_Q{\big| \mathfrak{S}_{[a]}^{near}f(x)\big|}dx \lesssim \frac{1}{|Q|}\int_Q{\sum_{k=0}^{\infty}{2^{km}\mathfrak{M}_{\sigma,2^k}\Pi^*_kf(x)}}dx\\
&\lesssim \sup_{0\leq k\leq 2}{\big\Vert 2^{km}\mathfrak{M}_{\sigma,2^k}\Pi^*_kf \big\Vert_{L^{\infty}}}+\frac{1}{|Q|}\int_Q{\sum_{k=3}^{\infty}{2^{km}\mathfrak{M}_{\sigma,2^k}\Pi^*_kf(x)}}dx
\end{align*}

By the $L^{\infty}$ boundedness of $\mathfrak{M}_{\sigma,2^k}$, triangle inequality, and embedding $F_{\infty}^{m,1}\hookrightarrow F_{\infty}^{m,\infty}$ the supremum part is less than a constant times $\Vert f\Vert_{F_{\infty}^{m,1}}$.

For each $P\in \mathcal{D}$ let $3P$ denote a dilate of $P$, defined as in Section \ref{statementsection}.
Let $\mathcal{A}_Q$ be the family of all cubes in $\mathcal{D}_2$ which belong to $Q$. Then we observe that the number of cubes in $\mathcal{A}_Q$ is at most $ \big(2^2l(Q)\big)^d$ and $Q\subset \bigcup_{P\in\mathcal{A}_Q}{(3P)}$.
Therefore
\begin{align*}
\frac{1}{|Q|}\int_Q{\sum_{k=3}^{\infty}{2^{km}\mathfrak{M}_{\sigma,2^k}\Pi^*_kf(x)}}dx &\lesssim \frac{1}{|Q|}\sum_{P\in\mathcal{A}_Q}{\int_{3P}{\sum_{k=3}^{\infty}{2^{km}\mathfrak{M}_{\sigma,2^k}\Pi_k^*f(x)}}dx}\\
&\lesssim \sup_{R\in\mathcal{D}_2}{\frac{1}{|R|}\int_R{\sum_{k=3}^{\infty}{2^{km}\mathfrak{M}_{\sigma,2^k}\Pi_k^*f(x)}}dx}.
\end{align*}
Now by using Lemma \ref{maximal2} and $\Pi_k^*=\Pi_{k-1}+\Pi_k+\Pi_{k+1}$, this is bounded by a constant times 
\begin{equation*}
\sup_{R\in\mathcal{D}_2}\frac{1}{|R|}\int_R\sum_{k=2}^{\infty}{2^{km}\big|\Pi_kf(x) \big|}dx\lesssim \Vert f\Vert_{F_{\infty}^{m,1}},
\end{equation*} which completes the proof of $(\ref{l(Q)>1})\lesssim \Vert f\Vert_{F_{\infty}^{m,1}}$.

For the estimation of the other part (\ref{l(Q)<1}) we assume $l(Q)<1$ and choose $\mu\geq 1$ such that $2^{-\mu}\leq l(Q)<2^{-\mu+1}$.
Then 
\begin{align*}
\frac{1}{|Q|}\int_Q{\big|\mathfrak{S}_{[a]}^{near}f(x)-\big( \mathfrak{S}_{[a]}^{near}f \big)_Q  \big|}dx&\leq {  \frac{1}{|Q|}\int_Q{\sum_{k=0}^{\infty}{\Big| T_{[d_k]}f(x)-\frac{1}{|Q|}\int_Q{T_{[d_k]}f(y)}dy  \Big|}}dx  }\\
&\lesssim I+II
\end{align*} where
\begin{equation*}
I:=\big\Vert T_{[d_0]}f\big\Vert_{L^{\infty}}+{  \frac{1}{|Q|}\int_Q{\sum_{k=\mu +1}^{\infty}{\big| T_{[d_k]}f(x)  \big|}}dx  }
\end{equation*}
\begin{equation*}
II:={  \frac{1}{|Q|}\int_Q\frac{1}{|Q|}\int_Q{\sum_{k=1}^{\mu}{\big| T_{[d_k]}f(x)-{T_{[d_k]}f(y)}  \big|}}dxdy  }.
\end{equation*}

Let $\mathcal{B}_{Q}:=\{P\in\mathcal{D}_{\mu}:P\cap Q\not= \phi\}$. Note that $\mathcal{B}_Q$ has at most $4^d$ elements.
Then \begin{align*}
\frac{1}{|Q|}\int_Q{\sum_{k=\mu+1}^{\infty}{\big|T_{[d_k]}f(x)\big|}}dx&\leq \frac{1}{|Q|}\sum_{P\in\mathcal{B}_Q}{\int_P{\sum_{k=\mu+1}^{\infty}{\big|T_{[d_k]}f(x) \big|}}dx}\\
&\lesssim \sup_{P\in\mathcal{D}_{\mu}}\frac{1}{|P|}\int_P{\sum_{k=\mu+1}^{\infty}{\big|T_{[d_k]}f(x)\big|}}dx
\end{align*} and by using (\ref{thirdkernelest}) and Lemma \ref{maximal2} one has  $I\lesssim \Vert f\Vert_{F_{\infty}^{m,1}}$.

In order to estimate $II$,  let $U_k$ be the kernel of $T_{[d_k]}$. Similar to (\ref{kernelexpress}), $U_k$ has no singularities and it acts on $C^{\infty}$-function $\Pi_k^*f$ when $f\in S'$.
Then for $x,y\in Q$
\begin{align*}
\big|  T_{[d_k]}f(x)-T_{[d_k]}f(y)  \big|&=\Big|  \int_{\mathbb{R}^d}{\big( U_k(x,z)-U_k(y,z) \big)\Pi^*_kf(z)}dz   \Big|\\
      &\leq \big\Vert \Pi^*_kf \big\Vert_{L^{\infty}}  \int_{\mathbb{R}^d}{\big| U_k(x,z)-U_k(y,z)  \big|}dz.
\end{align*}
We claim that for $1\leq k\leq \mu$
\begin{equation}\label{bvc}
\int_{\mathbb{R}^d}{\big| U_k(x,z)-U_k(y,z)  \big|}dz\lesssim 2^{km}2^{-(\mu-k)}.
\end{equation}
If (\ref{bvc}) holds then
\begin{equation*}
II\lesssim \Big(\sup_{k\geq 1}{2^{km}\big\Vert \Pi^*_kf\big\Vert_{L^{\infty}}}\Big)\sup_{\mu\geq 1}{\Big(\sum_{k=1}^{\mu}{2^{-(\mu-k)}}\Big)}\lesssim \Vert f\Vert_{F_{\infty}^{m,\infty}}\lesssim \Vert f\Vert_{F_{\infty}^{m,1}}.
\end{equation*}

To see (\ref{bvc}) we write
\begin{align*}
\big|U_k(x,z)-U_k(y,z) \big|&\leq\Big|  \int_{|\xi|\approx 2^k}{\big(d_k(x,\xi)-d_k(y,\xi) \big)e^{2\pi i\langle x-z,\xi \rangle} }d\xi\Big|\\
&\relphantom{=}+\Big| \int_{|\xi|\approx 2^k}{d_k(y,\xi)e^{2\pi i\langle x-z,\xi\rangle}\big(1-e^{2\pi i\langle y-x,\xi\rangle} \big)}d\xi\Big|.
\end{align*}
Then one obtains that for $x,y\in Q$ and $M>0$ 
 \begin{equation*}
 \big|U_k(x,z)-U_k(y,z) \big|\lesssim_M
 \begin{cases}
2^{km}2^{kd}2^kl(Q), 
\quad 
\\
2^{km}2^{kd}2^kl(Q)\big( 2^k|x-z|\big)^{-M},
\quad  |x-y|\geq 2^{-k}.
\end{cases}
 \end{equation*} 
Indeed, the first one follows from the mean value theorem and $|x-y|\leq \sqrt{d}l(Q)$, and the second one  from integration by parts in $\xi$ variable $M$ times and $2^k|x-y|\leq \sqrt{d}2^kl(Q)\lesssim 1$.
It follows that for $x,y\in Q$
\begin{equation*}
\int_{|x-z|\leq 2^{-k}}{\big| U_k(x,z)-U_k(y,z)  \big|}dz  \lesssim  2^{km}2^kl(Q)
\end{equation*} 
and
\begin{equation*}
\int_{|x-z|> 2^{-k}}{\big| U_k(x,z)-U_k(y,z)  \big|}dz \lesssim_M 2^{km}2^kl(Q)
\end{equation*} for $M>d$, which proves (\ref{bvc}).

\subsubsection{\textbf{Proof of Theorem \ref{s=0}}}\label{s=0section}

In the proof of Theorem \ref{mmain} the condition $s>\tau_q$ is used just to apply Lemma \ref{marshall2} for $T_{[d_k]}$ because the Fourier transform of $T_{[d_k]}f$ is supported in a ball. 
We observe that if (\ref{further}) holds, then $T_{[d_k]}f$ has a Fourier support in an annulus so that we apply Lemma \ref{marshall1}, instead of Lemma \ref{marshall2}. Therefore we can drop the restriction on $s$ with (\ref{further}).\\

\section{Proof of Negative results}\label{negativeresult}

As mentioned in Remark \ref{remark2.8} we consider only the case $\min{(p,q)}\leq 1$. 
We will construct  $a\in \mathcal{S}_{1,1}^{m}$ and $f\in F_{p}^{s+m,q}$ of the form 
\begin{equation*}
a(x,\xi)=\sum_{k}{F_k(\xi)G_k(x)}
\end{equation*}
\begin{equation*}
f(x)=\sum_{k}{H_k(x)}
\end{equation*} where $F_k,G_k,H_k\in S$ such that $Supp(F_k), Supp(\widehat{H_k})\subset \{\xi:|\xi|\approx 2^k\}$. 

In fact, it is enough to construct examples for $m=0$ because the case $m\not= 0$ follows immediately by replacing $F_k$ and $H_k$ by $2^{km}F_k$ and $2^{-km}H_k$, respectively.
Therefore we assume $m=0$.

The counterexamples in this section have three different natures based on the condition on $s$.
\begin{itemize}
\item $s\leq d/q-d$ - The main idea is the random construction method by Christ and Seeger \cite{Ch_Se}.
\item $s\leq d/p-d$ - The problem is closely related to the sharpness in local Hardy space $h^p$, $0<p\leq 1$.
Actually, the motivation of the examples below is the problem to construct $f\in S$ and $a\in\mathcal{S}_{1,1}^{m}$ such that $\Vert \Pi_kf\Vert_{h^p}\approx \Vert \Pi_kf\Vert_{L^p}\lesssim 2^{-sk}$ and $\Vert \Pi_kT_{[a]}f\Vert_{h^p}\approx \Vert \Pi_kT_{[a]}f\Vert_{L^p}\gtrsim 2^{-sk}k^{\epsilon}$ for some $\epsilon>0$.  
\item $s\leq 0$ - We follow the argument in Remark \ref{remark2.8}, which is similar to Ching's method.

\end{itemize}

Let $M>0$ be a fixed integer and in what follows let $\Gamma, \Lambda \in S$ satisfy $Supp(\widehat{{\Gamma}})\subset \{\xi : 1<|\xi|<2\}$,  ${\Gamma}(x)\geq 1$ for $|x|\leq 2^{-M+1}$, $Supp(\widehat{{\Lambda}})\subset \{\xi : {9}/{8}<|\xi|<{15}/{8}\}$, $\widehat{{\Lambda}}\geq 0$, and $\widehat{{\Lambda}}(\xi)= 1$ on $\{\xi : {5}/{4}\leq |\xi|\leq {7}/{4}\}$. Then define ${\Gamma}_k:=2^{kd}{\Gamma}(2^k\cdot)$ and  ${\Lambda}_k:=2^{kd}{\Lambda}(2^k\cdot)$.
Let $t_k:=10k$, $e_1:=(1,0,\dots,0)\in\mathbb{Z}^d$ and $\mathbf{v}_k:=(3/2)2^ke_1$.

\subsection*{Construction of pseudo-differential operators}\label{construction}

Define a symbol $a\in\mathcal{S}_{1,1}^{0}$ by
\begin{equation}\label{gggfff}
a(x,\xi):=\sum_{k=1}^{\infty}{\widehat{{\Lambda}}({\xi}/{2^k})e^{2\pi i\langle \mathbf{v}_k, x \rangle}}.
\end{equation}
For a sufficiently large number $L>M$ let $\{g_k\}_{k=M}^{L}$ be a family of Schwartz functions to be chosen later, and  define 
\begin{equation}\label{fffggg}
f_L(x):=\sum_{k=M}^{L}{2^{-{s} {t_k}}{\Lambda}_{{t_k}}\ast \big(g_{{t_k}} e^{-2\pi i\langle \mathbf{v}_{{t_k}},\cdot \rangle}\big)(x)}.
\end{equation}

Then we observe that
\begin{equation}\label{dexpress}
{T_{[a]}}f_L(x)=\sum_{k=1}^{\infty}{{\Lambda}_k\ast f_L(x)e^{2\pi i\langle \mathbf{v}_k,x \rangle}}=\sum_{n=M}^{L}{d_{{t_n}}(x)}
\end{equation}
where
\begin{equation*}
d_{t_n}(x):={2^{-{s} {t_n}}{\Lambda}_{{t_n}}\ast{\Lambda}_{{t_n}}\ast\big(g_{{t_n}}e^{-2\pi i\langle \mathbf{v}_{{t_n}},\cdot \rangle}\big)(x)e^{2\pi i\langle \mathbf{v}_{{t_n}},x \rangle}}.
\end{equation*}
Moreover, the support conditions of $\widehat{\phi_k}$ and $\widehat{{\Lambda}_{t_k}}$ ensure that
\begin{equation*}
\phi_k\ast f_L(x)=
\begin{cases}
2^{-{s} {t_n}}\phi_{{t_n}}\ast{\Lambda}_{{t_n}}\ast\big(g_{{t_n}}e^{-2\pi i\langle \mathbf{v}_{{t_n}},\cdot \rangle}\big)(x), \quad &
\quad k={t_n}, \ M\leq n\leq L,
\\
2^{-{s} {t_n}}\phi_{{t_n}+1}\ast{\Lambda}_{{t_n}}\ast\big(g_{{t_n}}e^{-2\pi i \langle\mathbf{v}_{{t_n}},\cdot \rangle}\big)(x), \quad &
\quad  k={t_n}+1, \ M\leq n\leq L
\\
0, \quad &
\quad  otherwise,
\end{cases},
\end{equation*}
and thus for $p<\infty$ and any ${\sigma} >0$
\begin{align}\label{estbound}
\big\Vert f_L\big\Vert_{F_p^{{s},q}}&\lesssim \Big[\int_{\mathbb{R}^d}{\Big(\sum_{n=M}^{L}{ \big|{\phi_{{t_n}}\ast {\Lambda}_{{t_n}}\ast\big(g_{{t_n}}e^{-2\pi i \langle \mathbf{v}_{{t_n}},\cdot \rangle}\big)(x)}\big|^q}\Big)^{{p}/{q}}}dx\Big]^{{1}/{p}}\nonumber\\
&\relphantom{=}+\Big[\int_{\mathbb{R}^d}{\Big(\sum_{n=M}^{L}{ \big|{\phi_{{t_n}+1}\ast {\Lambda}_{{t_n}}\ast\big(g_{{t_n}}e^{-2\pi i\langle \mathbf{v}_{{t_n}},\cdot \rangle}\big)(x)}\big|^q}\Big)^{{p}/{q}}}dx\Big]^{{1}/{p}}\nonumber\\
&\lesssim_{\sigma} \Big[\int_{\mathbb{R}^d}{\Big(\sum_{n=M}^{L}{\big(\mathfrak{M}_{{\sigma},2^{t_n}}{g_{{t_n}}}(x)\big)^q}\Big)^{{p}/{q}}}dx\Big]^{{1}/{p}}
\end{align}  where the usual modification is applied if $q=\infty$. 

On the other hand, 
 \begin{align}
\big\Vert   {T_{[a]}f_L}  \big\Vert_{F_p^{{s},q}} &=\Big\Vert   \Big(\sum_{l=0}^{\infty}{2^{{s} lq}\Big| \sum_{n=M}^{L}{\phi_l\ast d_{{t_n}}} \Big|^q  }\Big)^{{1}/{q}}\Big\Vert_{L^p}\nonumber\\
  &\gtrsim \Big\Vert   \Big(\sum_{l=M}^{L-M}{2^{{s} {t_l}q}\Big| \sum_{n=M}^{L}{(\phi_{{t_l}}+\phi_{{t_l}+1})\ast d_{{t_n}}} \Big|^q    }\Big)^{{1}/{q}}\Big\Vert_{L^p} \label{lowermain}.
\end{align}
Since $Supp(\widehat{d_{{t_n}}})\subset \{\xi :|\xi|<({27}/{16})2^{{t_n}+1}\}$ and $Supp(\widehat{\phi_{{t_l}}})\subset \{\xi:2^{{t_l}-1}\leq |\xi|\leq 2^{{t_l}+1}\}$, $\phi_{{t_l}}\ast d_{{t_n}}$ vanishes unless $l\leq n$.
Therefore for some $C>0$ one has
\begin{align}
(\ref{lowermain})&= \Big\Vert   \Big(\sum_{l=M}^{L-M}{2^{{s} {t_l}q}\Big| \sum_{n=l}^{L}{(\phi_{{t_l}}+\phi_{{t_l}+1})\ast d_{{t_n}}} \Big|^q    }\Big)^{{1}/{q}}\Big\Vert_{L^p}\nonumber\\
  &\geq C\Big\Vert   \Big(\sum_{l=M}^{L-M}{2^{{s} {t_l}q}\Big| \sum_{n=l+1}^{L}{(\phi_{{t_l}}+\phi_{{t_l}+1})\ast d_{{t_n}}} \Big|^q    }\Big)^{{1}/{q}}\Big\Vert_{L^p}\label{main}\\
  & \relphantom{=} -\Big\Vert   \Big(\sum_{l=M}^{L-M}{2^{{s} {t_l}q}\big| {(\phi_{{t_l}}+\phi_{{t_l}+1})\ast d_{{t_l}}} \big|^q    }\Big)^{{1}/{q}}\Big\Vert_{L^p}\label{error}.
\end{align} 

(\ref{error}) is a minor part and indeed, 
\begin{equation}\label{minorkey}
\big| {(\phi_{{t_l}}+\phi_{{t_l}+1})\ast d_{{t_l}}} \big|\lesssim_{\sigma}\mathfrak{M}_{\sigma,2^{t_l}}d_{t_l}(x)\lesssim 2^{-st_l}\mathfrak{M}_{\sigma,2^{t_l}}g_{t_l}(x)
\end{equation}
and thus (\ref{error}) is  bounded above by a constant times 
\begin{equation*}
\Big[\int_{\mathbb{R}^d}{\Big(\sum_{l=M}^{L}{\big(\mathfrak{M}_{{\sigma},2^{t_l}}{g_{{t_l}}}(x)\big)^q}\Big)^{{p}/{q}}}dx\Big]^{{1}/{p}}.
\end{equation*}
For (\ref{main}) we observe that
if $2^{{t_l}-1}\leq |\xi|\leq 2^{{t_l}+2}$ and $l+1\leq n$ then  
\begin{equation*}
(5/4) 2^{t_n}<\big({3}/{2}-{1}/{2^8}\big)2^{{t_n}}\leq |\xi -\mathbf{v}_{{t_n}}|\leq \big({3}/{2}+{1}/{2^8}\big)2^{{t_n}}<(7/4) 2^{t_n}
\end{equation*}
and for such $\xi$ one has $\widehat{{\Lambda}_{{t_n}}}(\xi-\mathbf{v}_{{t_n}})=1$.
Hence we write 
\begin{equation*}
\widehat{d_{{t_n}}}(\xi)=2^{-{s} {t_n}}\widehat{{\Lambda}_{{t_n}}}(\xi-\mathbf{v}_{{t_n}})\widehat{{\Lambda}_{{t_n}}}(\xi-\mathbf{v}_{{t_n}})\widehat{g_{{t_n}}}(\xi)=2^{-{s} {t_n}}\widehat{g_{{t_n}}}(\xi)
\end{equation*} and this gives  
\begin{equation}\label{doublecondition}
(\phi_{{t_l}}+\phi_{{t_l}+1}) \ast d_{{t_n}}(x)=2^{-{s}  {t_n}}(\phi_{{t_l}}+\phi_{{t_l}+1})\ast g_{{t_n}}(x)
\end{equation} for $l+1\leq n$.
Finally one obatins
\begin{align}\label{dfbound}
(\ref{main})&\approx \Big\Vert   \Big(\sum_{l=M}^{L-M}{2^{{s} {t_l}q}\Big| \sum_{n=l+1}^{L}{2^{-{s} {t_n}}\big(\phi_{{t_l}}+\phi_{{t_l}+1}\big)\ast g_{{t_n}}} \Big|^q    }\Big)^{{1}/{q}}   \Big\Vert_{L^p}\nonumber\\
&\gtrsim \Big\Vert   \Big[\sum_{l=M}^{L-M}{2^{{s} {t_l}q}\Big(\mathfrak{M}_{{\sigma},2^{t_l}}\Big( \sum_{n=l+1}^{L}{2^{-{s} {t_n}}\big(\phi_{{t_l}}+\phi_{{t_l}+1}\big)\ast g_{{t_n}}}\Big) \Big)^q    }\Big]^{{1}/{q}}   \Big\Vert_{L^p}\nonumber\\
&\gtrsim \Big\Vert   \Big(\sum_{l=M}^{L-M}{2^{{s} {t_l}q}\Big| \sum_{n=l+1}^{L}{2^{-{s} {t_n}} {\Gamma}_{{t_l}}\ast \big(\phi_{{t_l}}+\phi_{{t_l}+1}\big)\ast g_{{t_n}}} \Big|^q   }\Big)^{{1}/{q}}   \Big\Vert_{L^p}\nonumber\\
&= \Big\Vert   \Big(\sum_{l=M}^{L-M}{\Big| \sum_{n=1}^{L-l}{2^{-{s} {t_n}} {\Gamma}_{{t_l}}\ast g_{t_{n+l}}} \Big|^q    }\Big)^{{1}/{q}}   \Big\Vert_{L^p}
\end{align} by (\ref{max}) with ${\sigma} >\max{({d}/{p},{d}/{q})}$ and the fact that $\Gamma_{t_l}\ast \big(\phi_{t_l}+\phi_{t_l+1}\big)=\Gamma_{t_l}$.

So far we have established that for $p<\infty$ and $\sigma>0$
\begin{equation}\label{upper}
\big\Vert f_L\big\Vert_{F_p^{{s},q}}\lesssim \Big[\int_{\mathbb{R}^d}{\Big(\sum_{n=M}^{L}{\big(\mathfrak{M}_{{\sigma},2^{t_n}}{g_{{t_n}}}(x)\big)^q}\Big)^{{p}/{q}}}dx\Big]^{{1}/{p}},
\end{equation}  
and for $\sigma>\max{(d/p,d/q)}$
\begin{align}\label{lower}
\big\Vert  {T_{[a]}}f_L \big\Vert_{F_p^{{s},q}}&\geq A \Big\Vert   \Big(\sum_{l=M}^{L-M}{\Big| \sum_{n=1}^{L-l}{2^{-{s} {t_n}} {\Gamma}_{{t_l}}\ast g_{t_{n+l}}} \Big|^q    }\Big)^{{1}/{q}}   \Big\Vert_{L^p} \\
&\relphantom{=}-B \Big[\int_{\mathbb{R}^d}{\Big(\sum_{n=M}^{L}{\big(\mathfrak{M}_{{\sigma},2^{t_n}}{g_{{t_n}}}(x)\big)^q}\Big)^{{p}/{q}}}dx\Big]^{{1}/{p}}\nonumber
\end{align} 
where $A$ and $B$ are some positive numbers.

\subsection{Sharpness of $s>d/q-d$ in $F$-space when $0<q\leq 1$ and $q<p\leq \infty$}
We will prove Theorem \ref{sharpresult1} (1) for $0<q\leq 1$ and $q<p< \infty$ and Theorem \ref{sharpresult2} for $0<q\leq 1$.

\subsubsection{{Proof of Theorem \ref{sharpresult1} (1)}}\label{q<p}

Suppose $0<q\leq 1$, $q<p< \infty$, and $s\leq d/q-d$.
For each $k\in \mathbb{N}_0$, let $\mathcal{Q}(k)$ be the set of all dyadic cubes of side length $2^{-{t_k}-M}$ in $[0,1]^d$ and $ \mathcal{Q}:=\bigcup_{k\in\mathbb{Z}}{\mathcal{Q}(k)}$.
Let $\Omega$ be a probability space with probability measure $\lambda$. 
Let $\{\theta_{Q}\}$ be a family of independent random variables indexed by $Q\in\mathcal{Q}$, each of which takes the value $1$ with probability ${1}/{L}$ and the value $0$ with probability $1-{1}/{L}$.
Consider random functions 
\begin{equation*}
h_{k}^{\omega}(x):=\sum_{Q\in\mathcal{Q}(k)}{\theta_{Q}(\omega)\chi_Q(x)}
\end{equation*} for $\omega\in\Omega$.
Then the following result holds due to Christ and Seeger \cite{Ch_Se}.
\begin{lemma} \cite{Ch_Se}\label{ChSe}
Suppose $0<p,q<\infty$ and ${\sigma}>\max{({d}/{p},{d}/{q})}$.
Then \begin{equation*}
\Big(\int_{\Omega}{\Big\Vert   \Big(    \sum_{k=1}^{L}{\big(\mathfrak{M}_{{\sigma},2^{t_k}}h_{k}^{\omega}\big)^q}           \Big)^{{1}/{q}}       \Big\Vert_{L^p}^p    }d\lambda\Big)^{{1}/{p}}\lesssim_{p,q} 1
\end{equation*}
uniformly in $L$.
\end{lemma}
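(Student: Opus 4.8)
The inequality is a random/probabilistic estimate, and the natural approach is to compute the $p$-th moment directly, exploiting the independence of the $\theta_Q$. First I would fix $\sigma > \max(d/p,d/q)$ and record the pointwise bound $\mathfrak{M}_{\sigma,2^{t_k}}h_k^\omega(x) \lesssim \sum_{Q\in\mathcal{Q}(k)} \theta_Q(\omega)\, (1+2^{t_k}\,\mathrm{dist}(x,Q))^{-\sigma}$, which follows because $h_k^\omega$ is a sum of indicators of disjoint cubes of side $2^{-t_k-M}$ and the Peetre maximal function of a single such indicator decays like a power of the dilated distance. I would then split into two regimes: inside $[0,1]^d$ enlarged by a fixed amount, where the main contribution lives, and far away, where the rapid decay of the weight $(1+2^{t_k}|\cdot|)^{-\sigma}$ makes the tail summable and contributes $O(1)$.

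Next I would estimate $\int_\Omega \big\| (\sum_k (\mathfrak{M}_{\sigma,2^{t_k}} h_k^\omega)^q)^{1/q}\big\|_{L^p}^p \, d\lambda$. Since $p/q$ may be less than $1$ (when $q\le p$ but $p<\infty$) or bigger, but in any case $p\ge q$ here only if $q\le p$; to be safe I would use that for $0<q$ the $\ell^q$ "norm" satisfies $(\sum_k c_k^q)^{1/q}$ and interchange the spatial integral with the expectation by Fubini. The key is to bound $\mathbb{E}\big[(\mathfrak{M}_{\sigma,2^{t_k}}h_k^\omega(x))^q\big]$ and then sum in $k$; because each $\theta_Q$ has mean $1/L$ and there are about $2^{d(t_k+M)}$ cubes in $\mathcal{Q}(k)$, the contribution of level $k$ at a point $x$ is governed by $L^{-1}$ times a convergent geometric-type sum in the distances, uniformly in $k$ and $x\in[0,1]^d$. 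Summing the $L$ levels recovers a constant independent of $L$. One has to be careful to handle the $q<1$ case where $(\,\sum_Q \theta_Q w_Q)^q \le \sum_Q \theta_Q^q w_Q^q = \sum_Q \theta_Q w_Q^q$ (using $\theta_Q\in\{0,1\}$), which linearizes the expectation cleanly and is exactly why the hypothesis $q\le 1$ (or at least control of the interaction between overlapping tails) is convenient here; for general $q$ one instead uses Khintchine-type or direct moment bounds on sums of independent nonnegative variables.

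Then I would assemble: after Fubini, $\int_\Omega \|\cdots\|_{L^p}^p\,d\lambda = \int_{\mathbb{R}^d}\int_\Omega \big(\sum_k (\mathfrak{M}_{\sigma,2^{t_k}}h_k^\omega(x))^q\big)^{p/q} d\lambda\,dx$. For $p/q\ge 1$ apply Minkowski's integral inequality in the measure $d\lambda$ to pull the expectation inside the $\ell^{p/q}$; for $p/q<1$ use subadditivity of $t\mapsto t^{p/q}$ to bound the $\ell^q(\ell^{\,\cdot})$ expression by a sum over $k$ of single-level $p$-th moments. Either way one reduces to estimating, for each $k$, $\int_{\mathbb{R}^d}\mathbb{E}[(\mathfrak{M}_{\sigma,2^{t_k}}h_k^\omega(x))^{\max(p,q)}]\,dx$ (roughly), which by the linearization above and translation invariance is $\lesssim L^{-1}$ uniformly in $k$; the sum over the $L$ values of $k$ then gives a constant. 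Finally take $p$-th roots.

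**Main obstacle.** The delicate point is the case $q<1$ combined with $p/q$ possibly also less than $1$, where neither Minkowski nor a naive triangle inequality applies to the $L^p$ norm or the $\ell^q$ sum, so one must argue by $p$-subadditivity and the $\theta_Q\in\{0,1\}$ identity simultaneously and track that the off-cube tails of distinct cubes at the same level do not accumulate — i.e., that $\sum_{Q\in\mathcal{Q}(k)}(1+2^{t_k}\mathrm{dist}(x,Q))^{-\sigma}\lesssim 1$ uniformly in $x$ and $k$ — which is where $\sigma>d$ (hence $\sigma>\max(d/p,d/q)$ suffices when $p,q>1$, but for $\min(p,q)<1$ the bound $\sigma>d/\min(p,q)\ge d$ is exactly what is needed). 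Getting the uniformity in $L$ out of this — rather than a bound growing like $L^{1-1/p}$ or $\log L$ — is the crux, and it hinges on the mean $1/L$ of the $\theta_Q$ exactly canceling the $L$ levels.
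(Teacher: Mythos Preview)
The paper does not prove this lemma; it is quoted from Christ--Seeger \cite{Ch_Se}, and the only hint the paper gives about the proof is the pointwise domination $\mathfrak{M}_{\sigma,2^{t_k}}h_k^{\omega}\lesssim \mathcal{M}_r h_k^{\omega}$ for $\sigma>d/r$. That hint is in fact the essential ingredient you are missing, and your proposal has a real gap in the regime $p>q$, which is precisely the case used in the paper (the subsection assumes $0<q\le 1$ and $q<p$).

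Here is the problem. After Fubini you want to bound
\[
\int_{\mathbb{R}^d}\int_\Omega\Big(\sum_{k=1}^{L} c_k(\omega,x)\Big)^{p/q}\,d\lambda\,dx,\qquad c_k=(\mathfrak{M}_{\sigma,2^{t_k}}h_k^{\omega}(x))^q.
\]
When $p/q>1$, Minkowski's inequality in $L^{p/q}(\Omega\times\mathbb{R}^d)$ gives
\[
\Big(\int\!\!\int\Big(\sum_k c_k\Big)^{p/q}\Big)^{q/p}\le \sum_{k=1}^{L}\Big(\int\!\!\int c_k^{p/q}\Big)^{q/p}.
\]
Your single-level estimate $\int_{\mathbb{R}^d}\mathbb{E}\big[(\mathfrak{M}_{\sigma,2^{t_k}}h_k^\omega)^{p}\big]\,dx\lesssim L^{-1}$ is correct, but plugging it in yields $\sum_{k=1}^{L}L^{-q/p}=L^{1-q/p}$, hence a final bound of order $L^{p/q-1}$, which blows up. Separating the $k$'s by Minkowski destroys exactly the cancellation you identify as ``the crux''; the mean $1/L$ cancels $L$ levels only if the reduction produces a \emph{plain} sum $\sum_k$, which it does when $p\le q$ (via $p/q$-subadditivity) but not when $p>q$.

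The Christ--Seeger argument avoids this loss by not decoupling the levels. One takes $r<\min(p,q)$ with $\sigma>d/r$, uses $\mathfrak{M}_{\sigma,2^{t_k}}h_k^{\omega}\lesssim \mathcal{M}_r h_k^{\omega}$, and applies the Fefferman--Stein vector-valued maximal inequality to obtain
\[
\Big\|\Big(\sum_k(\mathfrak{M}_{\sigma,2^{t_k}}h_k^{\omega})^q\Big)^{1/q}\Big\|_{L^p}
\lesssim \Big\|\Big(\sum_k |h_k^{\omega}|^q\Big)^{1/q}\Big\|_{L^p}
=\Big\|\sum_k h_k^{\omega}\Big\|_{L^{p/q}}^{1/q},
\]
since $h_k^{\omega}$ is $\{0,1\}$-valued. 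Now the sum over $k$ stays intact: for fixed $x\in[0,1]^d$, $\sum_{k=1}^{L}h_k^{\omega}(x)=\sum_{k=1}^{L}\theta_{Q_k(x)}$ with $Q_k(x)$ the unique cube of $\mathcal{Q}(k)$ containing $x$. These are $L$ independent Bernoulli$(1/L)$ variables, so the sum is $\mathrm{Binomial}(L,1/L)$, whose $r$-th moments are bounded uniformly in $L$ for every $r>0$ (they converge to the moments of $\mathrm{Poisson}(1)$). Hence
\[
\int_\Omega\Big\|\sum_k h_k^{\omega}\Big\|_{L^{p/q}}^{p/q}d\lambda
=\int_{[0,1]^d}\mathbb{E}\Big[\Big(\sum_k h_k^{\omega}(x)\Big)^{p/q}\Big]dx\lesssim_{p,q}1.
\]
The point is that keeping the levels together and exploiting that the pointwise sum is a binomial with mean $1$ is what gives uniformity in $L$; your level-by-level scheme cannot recover this when $p>q$.
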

We note that one of the key idea in the proof of Lemma \ref{ChSe} is  the pointwide estimate
\begin{equation}\label{chse}
\mathfrak{M}_{\sigma,2^{t_k}}h_k^{\omega}(x)\lesssim_\sigma \mathcal{M}_rh_k^{\omega}(x) \quad \text{for}~ \sigma>d/r.
\end{equation}
To obtain lower bounds we  benefit from the Khintchine's inequality.
\begin{lemma}(Khintchine's inequality)\label{kh}
Let  $\{r_n(t)\}_{n=1}^{\infty}$ be Rademacher functions and $\{a_n\}_{n=1}^{\infty}$ be a sequence of complex numbers.
Suppose $0<p<\infty$.
Then \begin{equation*}
\Big( \int_{0}^1{\Big|  \sum_{n=1}^{\infty}{a_nr_n(t)} \Big|^p}dt  \Big)^{{1}/{p}}\approx \Big(\sum_{n=1}^{\infty}{|a_n|^2}\Big)^{{1}/{2}}.
\end{equation*}
\end{lemma}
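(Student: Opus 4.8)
The plan is to reduce to the classical proof of Khintchine's inequality, organised around the exact $L^2$ identity and then a bootstrap in both directions. First I would observe that it suffices to treat finite sums: $\sum_n |a_n|^2<\infty$ is forced by the statement (otherwise both sides are infinite), so the partial sums $S_N(t):=\sum_{n\le N}a_nr_n(t)$ are Cauchy in $L^2(0,1)$, and once the two-sided bound is established for each $S_N$ with constants independent of $N$, passing to the limit handles the infinite sum. I would also reduce to real coefficients: writing $a_n=b_n+ic_n$ one has $|S|^p\approx \big|\sum b_nr_n\big|^p+\big|\sum c_nr_n\big|^p$ pointwise, while $\sum|a_n|^2=\sum b_n^2+\sum c_n^2$; so from now on the $a_n$ are real and, by homogeneity, normalised so that $\sum_n a_n^2=1$.

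The anchor is the orthonormality of the Rademacher system on $(0,1)$, giving $\int_0^1|S(t)|^2\,dt=\sum_n a_n^2=1$ exactly; this is the case $p=2$ and also the pivot for interpolation. For the upper bound when $p\ge 2$ I would use the subgaussian estimate: by independence, $\int_0^1 e^{\lambda S(t)}\,dt=\prod_n\cosh(\lambda a_n)\le\prod_n e^{\lambda^2 a_n^2/2}=e^{\lambda^2/2}$, whence the tail bound $\lambda(\{|S|>u\})\le 2e^{-u^2/2}$ and then $\|S\|_p\le C_p<\infty$ by integrating $u^{p-1}$ against the tail. The lower bound for $p\ge 2$ is immediate from $\|S\|_p\ge\|S\|_2=1$ by monotonicity of $L^r$-norms on a probability space. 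For $0<p<2$ the upper bound $\|S\|_p\le\|S\|_2=1$ is again monotonicity, so the only real work is the lower bound, for which I would run the standard convexity argument: choose $\theta\in(0,1)$ with $\tfrac12=\tfrac{1-\theta}{p}+\tfrac{\theta}{4}$ (possible since $\tfrac1p>\tfrac12>\tfrac14$), so that H\"older's inequality gives $1=\|S\|_2^2\le\|S\|_p^{2(1-\theta)}\,\|S\|_4^{2\theta}$; invoking the already-proved case $p=4$, namely $\|S\|_4\le B_4$, this rearranges to $\|S\|_p\ge B_4^{-\theta/(1-\theta)}>0$, the desired lower bound.

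The step I expect to be the main (really, the only) obstacle is the lower bound for $p<2$: it is invisible to any naive monotonicity and genuinely requires the interpolation-against-the-fourth-moment trick, which in turn forces the order of the argument ($p=2$ identity first, then the $p\ge 2$ upper bound, then the $p<2$ lower bound). Everything else is routine: the exponential-moment computation uses only $\cosh x\le e^{x^2/2}$ together with independence, and the two reductions (to finite sums and to real coefficients) are elementary. I would also note that since the statement only claims the equivalence $\approx$, the explicit sharp constants play no role and I would not track them.
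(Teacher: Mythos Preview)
Your proof proposal is correct and follows the standard route to Khintchine's inequality: the $L^2$ orthogonality identity, the subgaussian tail bound via $\cosh x\le e^{x^2/2}$ for the upper bound when $p\ge 2$, monotonicity of $L^r$-norms on a probability space for the trivial directions, and the H\"older interpolation against the fourth moment for the lower bound when $0<p<2$. The reductions to finite sums and real coefficients are handled correctly.

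As for comparison with the paper: there is nothing to compare. The paper states Khintchine's inequality as a classical lemma and gives no proof; it is simply quoted as a known tool and then applied in the random-construction argument of Section~\ref{negativeresult}. Your write-up therefore supplies what the paper deliberately omits.
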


Let $\{r_{Q}\}$ be a family of Rademacher functions, defined on $[0,1]$, indexed by $Q\in\mathcal{Q}$ and define random functions 
\begin{equation*}
h_{k}^{\omega,t}(x):=\sum_{Q\in\mathcal{Q}(k)}{r_Q(t)\theta_{Q}(\omega)\chi_Q(x)}
\end{equation*} for $\omega\in\Omega, t\in [0,1]$.
Then by using the pointwise estimate
$\mathfrak{M}_{{\sigma},2^{t_k}}h_{k}^{\omega,t}(x) \leq \mathfrak{M}_{{\sigma},2^{t_k}}h_{k}^{\omega}(x)$ and Lemma \ref{ChSe} we establish 
\begin{equation}\label{corr}
\Big(\int_{0}^1{\int_{\Omega}{\Big\Vert   \Big(    \sum_{k=1}^{L}{(\mathfrak{M}_{{\sigma},2^{t_k}}h_{k}^{\omega,t})^q}           \Big)^{{1}/{q}}       \Big\Vert_{L^p}^p    }d\lambda}dt\Big)^{{1}/{p}}\lesssim_{p,q} 1
\end{equation}
for $0<p,q<\infty$ and ${\sigma}>\max{({d}/{p},{d}/{q})}$.

Now we apply (\ref{upper}) with $g_{{t_k}}=h_k^{\omega,t}$ and (\ref{corr}) to obtain
\begin{equation*}
\Big(\int_{0}^1{\int_{\Omega}{\big\Vert f_L^{\omega,t}\big\Vert_{F_p^{{s},q}}^p}d\lambda}dt\Big)^{{1}/{p}} \lesssim \Big(\int_{0}^1{\int_{\Omega}{\Big\Vert   \Big(    \sum_{k=M}^{L}{\big(\mathfrak{M}_{{\sigma},2^{t_k}}h_{k}^{\omega,t}\big)^q}           \Big)^{{1}/{q}}       \Big\Vert_{L^p}^p    }d\lambda}dt\Big)^{{1}/{p}}\lesssim 1
\end{equation*}  uniformly in $L$.

Thus, due to (\ref{lower}) it suffices to show that for ${s}+d-{d}/{q}\leq 0$ 
\begin{align}
&\Big(\int_{0}^1{\int_{\Omega}{\Big\Vert   \Big(\sum_{l=M}^{L-M}{\big| \sum_{n=1}^{L-l}{2^{-{s} {t_n}} {\Gamma}_{{t_l}}\ast h_{{n+l}}^{\omega,t}} \big|^q    }\Big)^{{1}/{q}}   \Big\Vert_{L^p}^p}d\lambda}dt\Big)^{{1}/{p}}\nonumber\\
&\gtrsim L^{-({s}+d-{d}/{q})/(2d)}\big(\log{L}\big)^{{1}/{2}}\label{toshow}.
\end{align}

By H\"older's inequality with $p/q>1$ and Khintchine's inequality we see that
\begin{align*}
&\Big(\int_{0}^{1}{\int_{\Omega}{\Big\Vert   \Big(\sum_{l=M}^{L-M}{\Big| \sum_{n=1}^{L-l}{2^{-{s} {t_n}} {\Gamma}_{{t_l}}\ast h_{{n+l}}^{\omega,t}} \Big|^q    }\Big)^{{1}/{q}}   \Big\Vert_{L^p}^p}d\lambda}dt\Big)^{{1}/{p}}\nonumber\\
&\geq \Big(\int_{0}^{1}{\int_{\Omega}{\int_{[0,1]^d}{\sum_{l=M}^{L-M}{\Big| \sum_{n=1}^{L-l}{2^{-{s} {t_n}} {\Gamma}_{{t_l}}\ast h_{{n+l}}^{\omega,t}(x)} \Big|^q    } }dx      }d\lambda}dt\Big)^{{1}/{q}}\nonumber\\
&= \Big(\int_{[0,1]^d}{\int_{\Omega}{\sum_{l=M}^{L-M}{\int_{0}^{1}{\Big| \sum_{n=1}^{L-l}{   \sum_{Q\in\mathcal{Q}(n+l)}{2^{-{s} {t_n}}r_Q(t)\theta_Q(\omega) {\Gamma}_{{t_l}}\ast \chi_Q(x)}} \Big|^q    }dt}      }d\lambda}dx\Big)^{{1}/{q}}\nonumber\\
&\gtrsim \Big[  \int_{[0,1]^d}{   \int_{\Omega}{        \sum_{l=M}^{L-M}{ \Big(  \sum_{n=1}^{L-l}{\sum_{Q\in\mathcal{Q}(n+l)}{2^{-2{s} {t_n}}\theta_Q(\omega)\big|{\Gamma}_{{t_l}}\ast \chi_Q(x)\big|^2   }} \Big)^{{q}/{2}}       }      }d\lambda      }dx       \Big]^{{1}/{q}}.\label{est}
\end{align*} 
For each $P\in\mathcal{Q}(l)$ and $n\geq 0$ let $\mathcal{V}_n(l,P)=\{Q\in\mathcal{Q}(l+n):Q\subset P\}.$
Then  the last expression is bounded below by \begin{equation}
\Big[ \sum_{l=M}^{L-M}{  \sum_{P\in\mathcal{Q}(l)}{ \int_{P}{     \int_{\Omega}{ \Big(   \sum_{n=1}^{L-l}{ 2^{-2{s} {t_n}}\sum_{Q\in\mathcal{V}_n(l,P)}{\theta_Q(\omega)\big|{\Gamma}_{{t_l}}\ast\chi_Q(x)\big|^2}       }             \Big)^{{q}/{2}}             }d\lambda}dx}             }   \Big]^{{1}/{q}}\label{3.3}.
\end{equation}
For $x\in P$ and $Q\in \mathcal{V}_n(l,P)$ we have ${\Gamma}_{{t_l}}\ast\chi_Q(x)\geq 2^{-{t_n}d}$ because ${\Gamma}(x)\geq 1$ for $|x|\leq 2^{-M}$.
This yields that (\ref{3.3}) is greater than 
\begin{equation}\label{mainest}
\Big[ \sum_{l=M}^{L-M}{ 2^{-{t_l}d} \sum_{P\in\mathcal{Q}(l)}{ \int_{\Omega}{   \Big(  \sum_{n=1}^{L-l}{2^{-2({s}+d) {t_n}}\sum_{Q\in\mathcal{V}_n(l,P)}{\theta_Q(\omega)}        }    \Big)^{{q}/{2}}        }d\lambda           }       }   \Big]^{{1}/{q}}
\end{equation}
 and, by Minkowski's inequality   with $2/q>1$
\begin{align}\label{min}
&\sum_{P\in\mathcal{Q}(l)}{\int_{\Omega}{ \Big(  \sum_{n=1}^{L-l}{  2^{-2({s}+d){t_n}}\sum_{Q\in\mathcal{V}_n(l,P)}{\theta_Q(\omega)}   }   \Big)^{{q}/{2}}   }d\lambda  }\nonumber  \\ 
&\geq \Big(  \sum_{n=1}^{L-l}{2^{-2({s}+d){t_n}}  \Big[   \sum_{P\in\mathcal{Q}(l)}{ \int_{\Omega}{ \Big(\sum_{Q\in\mathcal{V}_n(l,P)}{\theta_Q(\omega)}\Big)^{{q}/{2}} }d\lambda  }    \Big]^{{2}/{q}}     }   \Big)^{{q}/{2}}. 
\end{align}
For $P\in\mathcal{Q}(l)$ and $R\in\mathcal{V}_n(l,P)$, let $\Omega(P,R,l,n)$ be the event that $\theta_R(\omega)=1$, but $\theta_{R'}(\omega)=0$ for $R'\in \mathcal{V}_n(l,P)\setminus \{R\}$. We observe that the probability of this event is $\lambda(\Omega(P,R,l,n))\geq{1}/{L}\big(1-{1}/{L}\big)^{2^{{t_n}d}}$.
Therefore \begin{align*}
 \int_{\Omega}{ \Big(\sum_{Q\in\mathcal{V}_n(l,P)}{\theta_Q(\omega)}\Big)^{{q}/{2}} }d\lambda  &\geq \sum_{R\in\mathcal{V}_{n}(l,P)}{\int_{\Omega(P,R,l,n)}{ \Big(\sum_{Q\in\mathcal{V}_n(l,P)}{\theta_Q(\omega)}\Big)^{{q}/{2}}}d\lambda} \\
 &= \sum_{R\in\mathcal{V}_n(l,P)}{\lambda(\Omega(P,R,l,n))   }\geq2^{{t_n}d}\frac{1}{L}\Big(1-\frac{1}{L}\Big)^{2^{{t_n}d}}
\end{align*}
since $|\mathcal{V}_n(l,P)|=2^{t_nd}$, and this implies
\begin{equation*}
(\ref{min}) \geq 2^{{t_l}d}\frac{1}{L} \Big(  \sum_{n=1}^{L-l}{2^{-2({s}+d-{d}/{q}){t_n}}\Big(1-\frac{1}{L}\Big)^{({2}/{q})2^{{t_n}d}}}   \Big)^{{q}/{2}}.
\end{equation*}
Finally one obtains 
\begin{align*}
(\ref{mainest})&\geq\Big[  \frac{1}{L}\sum_{l=M}^{L-M}{ \Big(   \sum_{n=1}^{L-l}{2^{-2({s}+d-{d}/{q}){t_n}}\Big(1-\frac{1}{L}\Big)^{({2}/{q}){2^{{t_n}d}}}}     \Big)^{{q}/{2}}  }  \Big]^{{1}/{q}}\\
&\geq \Big[  \frac{1}{L}\sum_{l=\lfloor L/3\rfloor}^{\lfloor L/2\rfloor}{ \Big(   \sum_{n=1}^{\lfloor (1/10d)\log_2{L}\rfloor}{2^{-2({s}+d-{d}/{q}){t_n}}\Big(1-\frac{1}{L}\Big)^{({2}/{q}){L}}}     \Big)^{{q}/{2}}  }  \Big]^{{1}/{q}}\\
&\gtrsim L^{-({s}+d-{d}/{q})/(2d)}\big(\log{L}\big)^{{1}/{2}}
\end{align*} 
for sufficiently large $L>0$, and this proves (\ref{toshow}).

\subsubsection{{Proof of Theorem  \ref{sharpresult2}}}
Suppose $\mu\in \mathbb{N}$, $M>\mu$, $0<q\leq 1$, and $s\leq  d/q-d$. Let $L$ be a sufficiently large integer.
For each $k\in \mathbb{N}_0$ let $\mathcal{R}^{\mu}(k)$ be the set of all dyadic cubes of side length $2^{-{t_k}-M}$ in $[0,2^{-\mu}]^d$ and $ \mathcal{R}^{\mu}:=\bigcup_{k\in\mathbb{Z}}{\mathcal{R}^{\mu}(k)}$.
Let $\Omega$ be a probability space with probability measure $\lambda$. 
Let $\{\theta_{Q}\}$ be a family of independent random variables indexed by $Q\in\mathcal{R}^{\mu}$, each of which takes the value $1$ with probability ${1}/{L}$ and the value $0$ with probability $1-{1}/{L}$.
Let $\{r_{Q}\}$ be a family of  Rademacher functions, defined on $[0,1]$, indexed by $Q\in\mathcal{R}^{\mu}$.  For $\omega\in\Omega, t\in [0,1]$ define random functions 
\begin{equation*} 
h_{k}^{\omega,t,\mu}(x):=\sum_{Q\in\mathcal{R}^{\mu}(k)}{r_Q(t)\theta_{Q}(\omega)\chi_Q(x)}.
\end{equation*} 
and
\begin{equation*}
f_L^{\omega,t,\mu}(x):=\sum_{k=M}^{L}{2^{-st_k}\Lambda_{t_k}\ast\big(h_{k}^{\omega,t,\mu}e^{-2\pi i\langle   \mathbf{v}_k,\cdot\rangle} \big)(x)},
\end{equation*} which is of the form (\ref{fffggg}),
 and let $a\in\mathcal{S}_{1,1}^{0}$ be defined as in (\ref{gggfff}).
Then our claim is that
\begin{equation}\label{upperbd}
\Big[ \int_0^1\int_{\Omega}\sup_{R\in\mathcal{D}_{\mu}}\Big(\frac{1}{|R|}\int_{R}\sum_{k=\mu}^{\infty}{2^{skq}\big| \Pi_k f_L^{\omega,t,\mu}(x)\big|^q}dx\Big)     d\lambda dt\Big]^{1/q}\lesssim 1
\end{equation} uniformly in $L$ and for all large $L$
\begin{align}\label{lowerbd}
&\Big[ \int_0^1\int_{\Omega}\sup_{R\in\mathcal{D}_{\mu}}\Big(\frac{1}{|R|}\int_{R}\sum_{k=\mu}^{\infty}{2^{skq}\big| \Pi_k T_{[a]}f_L^{\omega,t,\mu}(x)\big|^q}dx\Big)     d\lambda dt\Big]^{1/q}\nonumber\\
&\gtrsim L^{-(s+d-d/q)/2d}\big( \log{L}\big)^{1/2}.
\end{align}

We observe that the left hand side of (\ref{upperbd}) is less than a constant times
\begin{equation*}
\Big[ \int_0^1\int_{\Omega}\sup_{R\in\mathcal{D}_{\mu}}\Big(\frac{1}{|R|}\int_{R}\sum_{k=M}^{L}{\big( \mathfrak{M}_{\sigma,2^{t_k}}h_k^{\omega,t,\mu}(x)\big)^q}dx\Big)     d\lambda dt\Big]^{1/q}
\end{equation*} by the argument that led to the inequaltiy (\ref{estbound}). Then by (\ref{chse}) and $L^q$-boundedness of  $\mathcal{M}_r$ for $r<q$,
it is dominated by a constant times
\begin{equation*}
\Big( \int_0^1{\int_{\Omega}{2^{\mu d}\sum_{k=M}^{L}{\big\Vert \mathcal{M}_rh_k^{\omega,t,\mu}\big\Vert_{L^q}^q}}d\lambda}dt\Big)^{1/q}\lesssim \Big(2^{\mu d}\sum_{k=M}^{L}{2^{-t_kd}2^{-Md}\sum_{Q\in\mathcal{R}^{\mu}(k)}{\lambda\big(\{\theta_Q=1\}\big)}} \Big)^{1/q}.
\end{equation*}
Then (\ref{upperbd}) follows from $|\mathcal{R}^{\mu}(k)|=2^{-\mu d}2^{t_kd}2^{Md}$ and $\lambda\big(\{\theta_Q=1\}\big)=1/L$.

Now we write, as in (\ref{dexpress}),
\begin{equation*}
T_{[a]}f_L^{\omega,t,\mu}(x)=\sum_{k=M}^{L}{d_{t_k}(x)}
\end{equation*}
where 
\begin{equation*}
d_{t_k}(x):=2^{-st_j}\Lambda_{t_k}\ast\Lambda_{t_k}\ast\big( h_k^{\omega,t,\mu}e^{-2\pi i\langle \mathbf{v}_k,\cdot\rangle}\big)(x)e^{2\pi i\langle \mathbf{v}_k,x\rangle}.
\end{equation*}
Then the left hand side of the inequality (\ref{lowerbd}) is bounded below by a constant times
\begin{align}
& \Big[ \int_0^1\int_{\Omega}\sup_{R\in\mathcal{D}_{\mu}}\Big(\frac{1}{|R|}\int_{R}\sum_{k=M}^{L}{2^{st_kq}\Big| \sum_{n=k}^{L}(\phi_{t_k}+\phi_{t_k+1})\ast d_{t_n}(x)\Big|^q}dx\Big)     d\lambda dt\Big]^{1/q}\nonumber\\
&\geq C\Big[ \int_0^1\int_{\Omega}\sup_{R\in\mathcal{D}_{\mu}}\Big(\frac{1}{|R|}\int_{R}\sum_{k=M}^{L}{2^{st_kq}\Big| \sum_{n=k+1}^{L}(\phi_{t_k}+\phi_{t_k+1})\ast d_{t_n}(x)\Big|^q}dx\Big)     d\lambda dt\Big]^{1/q}\label{mterm}\\
&\relphantom{=} -\Big[ \int_0^1\int_{\Omega}\sup_{R\in\mathcal{D}_{\mu}}\Big(\frac{1}{|R|}\int_{R}\sum_{k=M}^{L}{2^{st_kq}\big| (\phi_{t_k}+\phi_{t_k+1})\ast d_{t_k}(x)\big|^q}dx\Big)     d\lambda dt\Big]^{1/q}\label{eterm}
\end{align} for some $C>0$.

Clearly, $(\ref{eterm})\lesssim 1$ uniformly in $L$ by the idea in (\ref{minorkey}) and applying the argument that led to the bound in (\ref{upperbd}).

Now by applying (\ref{doublecondition}) with $g_{t_n}=h_{n}^{\omega,t,\mu}$ and the method in (\ref{dfbound}) with Lemma \ref{maximal2} one obtains that (\ref{mterm}) is greater than a constant times
\begin{align*}
& \Big[ \int_0^1\int_{\Omega}\sup_{R\in\mathcal{D}_{\mu}}{\Big(\frac{1}{|R|}\int_R\sum_{k=M}^{L}{\Big| \sum_{n=1}^{L-k}2^{-st_n}\Gamma_{t_k}\ast h_{{n+k}}^{\omega,t,\mu}(x)\Big|^q}      dx\Big)}d\lambda dt\Big]^{1/q}\\
&\geq \Big[2^{\mu d} \int_{[0,2^{-\mu}]^d}\int_{\Omega}{\sum_{k=M}^{L}{\Big(\int_{0}^1\Big| \sum_{n=1}^{L-k}\sum_{Q\in\mathcal{R}^{\mu}(n+k)}r_Q(t)\theta_Q(\omega)2^{-st_n}\Gamma_{t_k}\ast \chi_Q(x)\Big|^q dt\Big)}      }d\lambda dx\Big]^{1/q}\\
&\approx \Big[2^{\mu d} \sum_{k=M}^{L}\int_{[0,2^{-\mu}]^d}\int_{\Omega}{ {\Big(\sum_{n=1}^{L-k}\sum_{Q\in\mathcal{R}^{\mu}(n+k)}\theta_Q(\omega)2^{-2st_n}\big|\Gamma_{t_k}\ast \chi_Q(x)\big|^2  \Big)^{q/2}}    }d\lambda dx\Big]^{1/q}.
\end{align*}
For each $P\in\mathcal{R}^{\mu}(k)$ let $\mathcal{V}^{\mu}_n(k,P):=\{Q\in\mathcal{R}^{\mu}(k+n):Q\subset P\}$. 
Then we see that the last expression is 
\begin{align*}
&\geq \Big[2^{\mu d} \sum_{k=M}^{L}\sum_{P\in\mathcal{R}^{\mu}}\int_{P}\int_{\Omega}{ {\Big(\sum_{n=1}^{L-k}\sum_{Q\in\mathcal{V}^{\mu}_n(k,P)}\theta_Q(\omega)2^{-2st_n}\big|\Gamma_{t_k}\ast \chi_Q(x)\big|^2  \Big)^{q/2}}    }d\lambda dx\Big]^{1/q}\\
&\gtrsim \Big[2^{\mu d}\sum_{k=M}^{L}2^{-t_kd}\sum_{P\in\mathcal{R}^{\mu}(k)}\int_{\Omega}\Big( \sum_{n=1}^{L-k}{2^{-2st_n}2^{-2t_nd}\sum_{Q\in\mathcal{V}_n(k,P)}{\theta_Q(\omega)}}\Big)^{q/2}d\lambda \Big]^{1/q}\\
&\gtrsim \Big[2^{\mu d}\sum_{k=M}^{L}2^{-t_kd} \Big( \sum_{n=1}^{L-k}2^{-2st_n}2^{-2t_nd}\Big[\sum_{P\in\mathcal{R}^{\mu}(k)}\int_{\Omega} \Big( {\sum_{Q\in\mathcal{V}_n(k,P)}{\theta_Q(\omega)}}\Big)^{q/2}d\lambda \Big]^{2/q} \Big)^{q/2}\Big]^{1/q}\\
&\gtrsim \Big[ 2^{\mu d}\sum_{k=M}^{L}{2^{-t_kd}\Big(\sum_{n=1}^{L-k}{2^{-2st_n}2^{-2t_nd}2^{2t_nd/q}\Big[ \sum_{P\in\mathcal{R}^{\mu}(k)}{1/L \big(1-1/L \big)^{2^{t_nd}}}\Big]^{2/q}} \Big)^{q/2}     }\Big]^{1/q}\\
&\approx \Big[\frac{1}{L}\sum_{k=M}^{L}{\Big( \sum_{n=1}^{L-k}2^{-2(s+d-d/q)t_n}\Big( 1-\frac{1}{L}\Big)^{(2/q)2^{t_nd}}\Big)^{q/2}} \Big]^{1/q}\\
&\gtrsim L^{-(s+d-d/q)/2d}(\log{L})^{1/2}
\end{align*} by following the process to get (\ref{toshow}). This proves (\ref{lowerbd}).

\subsection{Sharpness of $s>d/p-d$ in $F$-space and $B$-space when $0<p\leq 1$}

We will prove Theorem \ref{sharpresult1} (1) for $0<p\leq 1$ and $p\leq q\leq \infty$ and Theorem \ref{sharpresult1} (2) for $0<p\leq 1$ and $0<q\leq \infty$.
\subsubsection{{Proof of Theorem \ref{sharpresult1} (1)}}
Suppose $0<p\leq 1$, $p\leq q\leq \infty$, and $s\leq d/p-d$.
We will apply (\ref{upper}) and (\ref{lower}).
Pick a nonnegative smooth function $g$ supported in a ball of radius $2^{-M}$, centered at the origin, and define 
\begin{equation}\label{ffgg}
g_k(x):=2^{k{d}/{p}}g(2^kx).
\end{equation}
Then for all ${\sigma}>0$ 
\begin{align}\label{maxin}
\mathfrak{M}_{{\sigma},2^{t_n}}g_{{t_n}}(x)&\leq  2^{{dt_n}/{p}}\frac{1}{(1+2^{{t_n}}|x|)^{{\sigma}}}\sup_{|y|\leq 2^{-M}}{(1+|y|)^{{\sigma}}|g(y)|}\nonumber\\
     &\lesssim 2^{{dt_n}/{p}}\frac{1}{(1+2^{{t_n}}|x|)^{{\sigma}}}.
\end{align} 
We choose $\sigma>d/p$.
By (\ref{upper}) and $l^p\hookrightarrow l^q$ one has 
\begin{align*}
\big\Vert f_L\big\Vert_{F_p^{{s},q}}&\lesssim \Big[ \int_{\mathbb{R}^d}{ \Big(  \sum_{n=M}^{L}{2^{{d}{t_n}q/p}\frac{1}{(1+2^{{t_n}}|x|)^{{\sigma} q}}}   \Big)^{{p}/{q}}  }dx   \Big]^{{1}/{p}}\\
&\leq \Big( \int_{\mathbb{R}^d}{   \sum_{n=M}^{L}{2^{d{t_n}}\frac{1}{(1+2^{{t_n}}|x|)^{{\sigma} p}}}    }dx   \Big)^{{1}/{p}}\lesssim L^{{1}/{p}}.
\end{align*}

For the lower bound we see that
 \begin{align}\label{mainlower}
&\Big\Vert   \Big(  \sum_{l=M}^{L-M}{\Big| \sum_{n=1}^{L-l}{2^{-{s} {t_n}}{\Gamma}_{{t_l}}\ast g_{t_{n+l}}}   \Big|^q}    \Big)^{{1}/{q}}\Big\Vert_{L^p}\nonumber\\
&\geq \Big(\sum_{j=M}^{L-M}{\Big\Vert     \Big(  \sum_{l=M}^{L-M}{\Big| \sum_{n=1}^{L-l}{2^{-{s} {t_n}}{\Gamma}_{{t_l}}\ast g_{t_{n+l}}}   \Big|^q}    \Big)^{{1}/{q}}   \Big\Vert_{L^p( 2^{-t_j-M-1}\leq|x|<2^{-t_j-M}   )}^{p}} \Big)^{1/p} \nonumber\\
&\geq \Big( \sum_{j=M}^{L-M}{\int_{2^{-{t_j}-M-1}\leq |x|\leq 2^{-{t_j}-M}}{     \Big| \sum_{n=1}^{L-j}{2^{-{s} {t_n}}{\Gamma}_{{t_j}}\ast g_{t_{n+j}}(x)}   \Big|^p }dx } \Big)^{{1}/{p}}.
\end{align}
If $|y|\leq 2^{-M}$ and $|x|\leq 2^{-{t_j}-M}$ then $|2^{{t_j}}x-y|\leq 2^{-M+1}$ and for such $x$ and $y$ one has ${\Gamma}(2^{{t_j}}x-y)\geq 1$.
Therefore
 \begin{align}\label{low}
{\Gamma}_{{t_j}}\ast g_{t_{n+j}}(x)&= 2^{{d}{t_n}/p}2^{{d}{t_j}/p}\int_{|y|\leq 2^{-M}}{{\Gamma}(2^{{t_j}}x-y)g(2^{{t_n}}y)}dy\nonumber\\
&\geq  2^{{d}t_n/{p}}2^{{d}t_j/{p}}2^{-{t_n}d}\Vert g \Vert_{L^1}
\end{align}
and this gives
\begin{align*}
(\ref{mainlower})&\gtrsim_M \Big[ \sum_{j=M}^{L-M}{\Big(\sum_{n=1}^{L-j}{2^{-{t_n}({s}+d-{d}/{p})}}\Big)^p}  \Big]^{{1}/{p}}\\
  &\geq  \Big[ \sum_{j=\lfloor L/3\rfloor}^{\lfloor L/2\rfloor}{\Big(\sum_{n=1}^{\lfloor L/3\rfloor}{2^{-{t_n}({s}+d-{d}/{p})}}\Big)^p}  \Big]^{{1}/{p}}  \gtrsim L^{{1}/{p}}L^{-({s}+d-{d}/{p})}\log{L}
\end{align*} for sufficiently large $L$.

By (\ref{lower}), \begin{equation*}
\big\Vert  T_{[a]}f_L \big\Vert_{F_p^{{s},q}} \gtrsim  L^{{1}/{p}}L^{-({s}+d-{d}/{p})}\log{L}
\end{equation*} and
 we are done by letting $L\to \infty$ since ${s}+d-{d}/{p}\leq 0$.

\subsubsection{{Proof of Theorem \ref{sharpresult1} (2)}}
Suppose $0<p\leq 1$ and $s\leq d/p-d$.
Define $a$ and $f_L$ to be as (\ref{gggfff}) and (\ref{fffggg}) with $g$ as in (\ref{ffgg}). Corresponding to (\ref{upper}) we have the analogous estimate 
\begin{equation*}\label{bin}
\big\Vert   f_L\big\Vert_{B_p^{{s},q}} \lesssim_{\sigma} \Big( \sum_{n=M}^{L}{\big\Vert \mathfrak{M}_{{\sigma},2^{t_n}}{g_{{t_n}}}\big\Vert_{L^p}^q}  \Big)^{{1}/{q}}
\end{equation*} for ${\sigma}>{d}/{p}$ and then (\ref{maxin}) yields
\begin{equation*}
\Vert f_L\Vert_{B_p^{s,q}} \lesssim \Big( \sum_{n=M}^{L}{2^{{d}q{t_n}/p}\Big\Vert  \frac{1}{(1+2^{{t_n}}|\cdot|)^{{\sigma}}} \Big\Vert_{L^p}^{q}}\Big)^{{1}/{q}}\lesssim L^{{1}/{q}}.
\end{equation*} 

Furthermore, similar to (\ref{lower}) one has 
\begin{align*}
\big\Vert {T_{[a]}}f_L \big\Vert_{B_p^{{s},q}} &\geq A\Big( \sum_{l=M}^{L-M}{2^{st_lq}\Big\Vert \sum_{n=l+1}^{L}{\big(\phi_{t_l}+\phi_{t_l+1}\big)\ast d_{t_n}}\Big\Vert_{L^p}^{q}}\Big)^{1/q}\\
&\relphantom{=} - B \Big( \sum_{l=M}^{L-M}{2^{st_lq}\big\Vert {\big(\phi_{t_l}+\phi_{t_l+1}\big)\ast d_{t_l}}\big\Vert_{L^p}^{q}}\Big)^{1/q}\\
&=: \mathcal{I}-\mathcal{J}
\end{align*} 
for some $A,B>0$,
and using (\ref{minorkey}) 
\begin{equation*}
\mathcal{J}\lesssim\Big(\sum_{l=M}^{L-M}{\big\Vert \mathfrak{M}_{\sigma,2^{t_l}}g_{t_l}\big\Vert_{L^p}^q} \Big)^{1/q}\lesssim L^{1/q}.
\end{equation*}

To get the lower bound of $\mathcal{I}$ we apply (\ref{doublecondition}) and the idea in (\ref{dfbound}), and then it follows
\begin{align*}
\mathcal{I}&\gtrsim\Big(\sum_{l=M}^{L-M}{2^{st_lq}\Big\Vert \sum_{n=l+1}^{L}{2^{-st_n}\big(\phi_{t_l}+\phi_{t_l+1}\big)\ast g_{t_n}}\Big\Vert_{L^p}^q} \Big)^{1/q}\\
&\gtrsim \Big( \sum_{l=M}^{L-M}{\Big\Vert \sum_{n=1}^{L-l}{2^{-{s} {t_n}}\Gamma_{t_l}\ast g_{t_{n+l}}}   \Big\Vert_{L^p}^{q}} \Big)^{{1}/{q}}\nonumber\\
  &\geq \Big( \sum_{l=M}^{L-M}{\Big\Vert \sum_{n=1}^{L-l}{2^{-{s} {t_n}}\Gamma_{t_l}\ast g_{t_{n+l}}}   \Big\Vert_{L^p(2^{-t_l-M-1}\leq |x|<2^{-t_l-M})}^{q}} \Big)^{{1}/{q}}.
  \end{align*}
Now (\ref{low}) yields that the last expression is greater than a constant times \begin{equation*}
\Big(\sum_{l=M}^{L-M}{\Big(\sum_{n=1}^{L-l}{  2^{-{t_n}({s}+d-{d}/{p})}}   \Big)^q}\Big)^{{1}/{q}} \gtrsim L^{{1}/{q}}L^{-({s}+d-{d}/{p})}\log{L}
\end{equation*} for ${s}+d-{d}/{p}\leq 0$. The proof ends by letting $L\to\infty$.

\subsection{Sharpness of $s>0$ in $B$-space when $0<q\leq 1<p\leq \infty$}

In this section we will prove Theorem \ref{sharpresult1} (2) for $0<q\leq 1<p\leq \infty$.
\subsubsection*{\textbf{The case $0<q\leq 1<p<\infty$}}\label{ee2}
Suppose $s\leq 0$.
Let $\{b_n\}$ be a sequence of real numbers, and let
\begin{equation*}
g_L(x):=\sum_{n=M}^{L}{b_n2^{-{t_n}d(1-{1}/{p})}\phi_{{t_n}} }(x),
\end{equation*} and 
\begin{equation*}
f_L(x):=2^{-{s} {t_L}}\Lambda_{{t_L}}\ast \big(g_L e^{-2\pi i\langle \mathbf{v}_{{t_L}},\cdot\rangle }\big)(x).
\end{equation*}
Then we observe that by Young's inequality
\begin{equation*}
\big\Vert  f_L \big\Vert_{B_p^{{s},q}} \lesssim \big\Vert \phi_{{t_L}}\ast\Lambda_{{t_L}}\ast \big(g_L e^{-2\pi i\langle \mathbf{v}_{t_{L}},\cdot \rangle}\big)  \big\Vert_{L^p}+\big\Vert \phi_{{t_L}-1}\ast\Lambda_{{t_L}}\ast \big(g_L e^{-2\pi i\langle \mathbf{v}_{{t_L}},\cdot \rangle}\big)  \big\Vert_{L^p}\lesssim\big\Vert  g_L \big\Vert_{L^p}.
\end{equation*} 
When $1<p<2$, by Littlewood-Paley theory and $l^p\hookrightarrow l^2$
\begin{align*}
\big\Vert g_L\big\Vert_{L^p}       &\lesssim \Big\Vert \Big(\sum_{n=M}^{L}{b_n^22^{-2t_nd(1-1/p)}|\phi_{t_n}|^2}\Big)^{1/2}\Big\Vert_{L^p}\\
&\leq \Big( \sum_{n=M}^{L}{b_n^p 2^{-{t_n}d(p-1)}\big\Vert  \phi_{{t_n}}\big\Vert_{L^p}^p}   \Big)^{{1}/{p}}\approx\Big(\sum_{n=M}^{L}{b_n^p}\Big)^{{1}/{p}}.
\end{align*}
(Here we may also use orthogonality for $p=2$ and triangle inequality for $p=1$, and then apply interpolation.)
When $2\leq p<\infty$, by Hausdorff-Young's inequality,
\begin{equation*}
\big\Vert  g_L \big\Vert_{L^p} \lesssim \Big\Vert  \sum_{n=M}^{L}{b_n2^{-{t_n}d(1-{1}/{p})}  \widehat{\phi_{{t_n}}} } \Big\Vert_{L^{p'}}=\Big(\sum_{n=M}^{L}{b_n^{p'}2^{-t_nd}\big\Vert \widehat{\phi_{t_n}}\big\Vert_{L^{p'}}^{p'}} \Big)^{1/p'}\approx\Big(\sum_{n=M}^{L}{b_n^{p'}}\Big)^{{1}/{p'}}
\end{equation*} where ${1}/{p}+{1}/{p'}=1$.
Thus for $1<p<\infty$ we have obtained \begin{equation}\label{finalupper}
\big\Vert f_L \big\Vert_{B_p^{s,q}}\lesssim \Big(\sum_{n=M}^{L}{b_n^{\widetilde{p}}}\Big)^{{1}/{\widetilde{p}}}\end{equation}
where $\widetilde{p}=\min{( p, \frac{p}{p-1} )}>1$.

On the other hand, let $a\in\mathcal{S}_{1,1}^{0}$ be defined as (\ref{gggfff}).
Then 
\begin{equation*}
{T_{[a]}}f_L(x) = 2^{-{s} {t_L}}\Lambda_{{t_L}}\ast\Lambda_{{t_L}}\ast\big( g_{L} e^{-2\pi i\langle \mathbf{v}_{{t_L}},\cdot \rangle}\big)(x)e^{2\pi i\langle \mathbf{v}_{{t_L}},x \rangle}=2^{-st_L}\sum_{k=0}^{L}{m_k(x)}
\end{equation*}
where
\begin{equation*}
m_0(x):=\Lambda_0\ast\Lambda_0\ast \big(g_L e^{-2\pi i\langle \mathbf{v}_0,\cdot \rangle}\big)(x)e^{2\pi i\langle \mathbf{v}_0,x\rangle},
\end{equation*}
\begin{align*}
m_k(x)&:= \Lambda_{{t_k}}\ast\Lambda_{{t_k}}\ast \big(g_L e^{-2\pi i\langle \mathbf{v}_{{t_k}},\cdot \rangle}\big)(x)e^{2\pi i\langle \mathbf{v}_{{t_k}},x \rangle}\\
   &\relphantom{=} -\Lambda_{t_{k-1}}\ast\Lambda_{t_{k-1}}\ast \big(g_L e^{-2\pi i\langle\mathbf{v}_{t_{k-1}},\cdot \rangle}\big)(x)e^{2\pi i\langle\mathbf{v}_{t_{k-1}},x \rangle}
\end{align*}   for $k\geq 1$.
Observe that for $k\geq 1$
\begin{equation*}\label{m}
\widehat{m_k}(\xi)      = \widehat{g}_L(\xi)\Big(  \big(\widehat{\Lambda}\big({\xi}/{2^{{t_k}}}-\mathbf{v}_0\big)\big)^2-\big(\widehat{\Lambda}\big({\xi}/{2^{t_{k-1}}}-\mathbf{v}_0\big) \big)^2 \Big):=\widehat{g}_L(\xi)\widehat{\Psi_{{t_k}}}(\xi)
\end{equation*} 
and 
\begin{equation*}
Supp{(\widehat{m_k})}\subset \{2^{-13}2^{{t_k}}\leq |\xi|\leq({27}/{8})2^{{t_k}}\}.
\end{equation*} Indeed,
if $|\xi|<2^{-13}2^{{t_k}}$ then $5/4<\big| {\xi}/{2^{{t_k}}}-\mathbf{v}_0   \big|<7/4$ and ${5}/{4}<\big| {\xi}/{2^{t_{k-1}}}-\mathbf{v}_0   \big|<{7}/{4}$ for which $\widehat{\Psi_k}(\xi)=0$. 
Moreover, if $|\xi|>(27/8) 2^{t_k}$ then $15/8<\big| {\xi}/{2^{{t_k}}}-\mathbf{v}_0   \big|$ and $15/8<\big| {\xi}/{2^{t_{k-1}}}-\mathbf{v}_0   \big|$ which imply $\widehat{\Psi_k}(\xi)=0$.
Therefore for ${s}\leq 0$
 \begin{align}\label{finallower}
\big\Vert {T_{[a]}}f_L\big\Vert_{B_p^{{s},q}} &= 2^{-{s} {t_L}}\Big(\sum_{l=0}^{\infty}{2^{{s} lq}\Big\Vert  \sum_{k=0}^{L}{\Pi_l m_k} \Big\Vert_{L^p}^q}\Big)^{{1}/{q}}\nonumber\\
&\geq  2^{-st_L}\Big(\sum_{l=M}^{L-1}{2^{st_lq}\Big\Vert \sum_{k=0}^{L}{\phi_{t_l}\ast m_k}\Big\Vert_{L^p}^q} \Big)^{1/q}\nonumber\\
&\geq  \Big(  \sum_{l=M}^{L-1}{\big\Vert \phi_{{t_l}}\ast (m_l+m_{l+1})  \big\Vert_{L^p}^q} \Big)^{{1}/{q}}\nonumber\\
   &= \Big(  \sum_{l=M}^{L-1}{b_l^q2^{-{t_l}d(1-{1}/{p})q}\big\Vert \phi_{{t_l}}\ast \phi_{{t_l}} \ast (\Psi_{{t_l}}+\Psi_{t_{l+1}}) \big\Vert_{L^p}^q} \Big)^{{1}/{q}}\nonumber\\
   &\approx  \Big(  \sum_{l=M}^{L-1}{  b_l^q   } \Big)^{{1}/{q}}.
\end{align}
We are done by choosing a sequence $\{b_n\}$ so that $ (\ref{finalupper}) \lesssim 1$ uniformly in $L$, but (\ref{finallower}) diverges as $L\to \infty$.

\subsubsection*{ \textbf{The case $0<q\leq 1<p=\infty$}{ ( actually, $0<q<\infty$ and $p=\infty$ )}}
Suppose $s\leq 0$.
Define a symbol $a\in\mathcal{S}_{1,1}^{0}$ to be 
\begin{equation*}
a(x,\xi):=\sum_{k=10}^{\infty}{\widehat{\phi_{t_k}^*}(\xi)e^{2\pi i\langle2^{t_k}e_1,x\rangle}}
\end{equation*} where $\phi_{t_k}^*:=\phi_{t_k-1}+\phi_{t_k}+\phi_{t_k+1}$ as before,
and for sufficiently large $L>0$ let 
\begin{equation*}
g_L(x):=2^{-st_L}e^{-2\pi i\langle 2^{t_L}e_1,x\rangle}\sum_{k=10}^{L-10}\phi(2^{t_k}(x-t_ke_1)).
\end{equation*}
We observe that $Supp(\widehat{g_L})\subset \{\xi:2^{t_L}(1-1/2^{99})\leq |\xi|\leq 2^{t_L}(1+1/2^{99})\}$ and thus
\begin{equation*}
\Vert g_L \Vert_{B_{\infty}^{s,q}}\lesssim \Big\Vert \sum_{k=10}^{L-10}{\phi\big(2^{t_k}(x-t_ke_1)\big)}\Big\Vert_{L^{\infty}}\lesssim 1.
\end{equation*}

On the other hand,
we see that 
\begin{equation*}
T_{[a]}g_L(x)=2^{-st_L}\sum_{k=10}^{L-10}\phi(2^{t_k}(x-t_ke_1))
\end{equation*} and thus
\begin{align*}
\Vert T_{[a]}g_L\Vert_{B_{\infty}^{s,q}}&\approx \Big(\sum_{k=10}^{L-10}{2^{st_kq}\big\Vert \phi_{t_k}^*\ast \big(T_{[a]}g_L\big)\big\Vert_{L^{\infty}}^q} \Big)^{1/q} \\
&\approx 2^{-st_L}\Big(\sum_{k=10}^{L-10}{2^{st_kq}\big\Vert \phi\big(2^{t_k}(\cdot-t_k e_1) \big)\big\Vert_{L^{\infty}}^q} \Big)^{1/q}\approx 
\begin{cases} 
L^{1/q} \quad &~s=0\\
2^{-st_L} \quad &~s<0
\end{cases}.
\end{align*}
This completes the proof.

\subsection*{Acknowledgements}
The author was supported in part by NSF grant DMS 1500162.
The author would like to thank his graduate advisor Andreas Seeger for the guidance and helpful discussions and Professor Jon Johnsen for very informative discussions and preprints of his works.
The author also would like to express gratitude to the anonymous referees for the careful reading and suggestions to improve the presentation of the work.

\end{document}